\newcommand{\R}{\ensuremath{\mathbb R}}
\newcommand{\mean}[1]{\ensuremath{\operatorname{mean}\inp{#1}}}
\newcommand{\ra}{\ensuremath{\rightarrow}}
\newcommand{\de}{\ensuremath{\delta}}
\newcommand{\Z}{\mathbb{Z}} \newcommand{\Q}{\mathbb{Q}}  
 \newcommand{\x}{\mathbf{x}} \newcommand{\y}{\mathbf{y}}    
\newcommand{\n}{\mathbf{n}}    
    \newcommand{\one}{\mathbf{1}}
\newcommand{\la}{\lambda}
\newcommand\eps{{\epsilon}}
\newcommand{\lvol}[1]{\ensuremath{\mathrm{vol}\inp{#1}}}
\newcommand{\tg}{\hat{g}}
\newcommand{\dist}{\mathbf{d}}
\renewcommand{\i}{k}
\renewcommand{\mathbf}[1]{\bm{#1}}
\newcommand{\arxiv}[1]{\href{https://arxiv.org/abs/#1}{arXiv:#1}}
\numberwithin{equation}{section}
\newtheorem{theorem}{Theorem}[section]
\newtheorem{observation}[theorem]{Observation}
\newtheorem{claim}[theorem]{Claim}
\newtheorem{lemma}[theorem]{Lemma}
\newtheorem{corollary}[theorem]{Corollary}
\newtheorem{proposition}[theorem]{Proposition}
\theoremstyle{definition}
\newtheorem{definition}[theorem]{Definition}
\newtheorem{remark}[theorem]{Remark}
\crefname{theorem}{Theorem}{Theorems}
\crefname{observation}{Observation}{Observations}
\crefname{claim}{Claim}{Claims}
\crefname{condition}{Condition}{Conditions}
\crefname{example}{Example}{Examples}
\crefname{fact}{Fact}{Facts}
\crefname{lemma}{Lemma}{Lemmas}
\crefname{corollary}{Corollary}{Corollaries}
\crefname{definition}{Definition}{Definitions}
\crefname{remark}{Remark}{Remarks}
\crefname{proposition}{Proposition}{Propositions}
\crefname{section}{Section}{Sections}
\crefname{appendix}{Appendix}{Appendices}
\newcommand{\st}[0]{\ensuremath{\;\mathbf{:}\;}}
\newcommand{\abs}[1]{\ensuremath{\left|#1\right|}}
\newcommand{\norm}[2][]{\ensuremath{\Vert #2 \Vert_{#1}}}
\newcommand{\ceil}[1]{\ensuremath{\left\lceil#1\right\rceil}}
\newcommand{\inb}[1]{\left\{#1\right\}}
\newcommand{\inp}[1]{\left(#1\right)}
\newcommand{\insq}[1]{\left[#1\right]}
\newcommand*{\defeq}{\mathrel{\rlap{\raisebox{0.3ex}{$\m@th\cdot$}}\raisebox{-0.3ex}{$\m@th\cdot$}}=}
\newcommand*{\eqdef}{=
  \mathrel{\rlap{\raisebox{0.3ex}{$\m@th\cdot$}}\raisebox{-0.3ex}{$\m@th\cdot$}}}
\newcommand{\poly}[1]{\ensuremath{\operatorname{poly}\inp{#1}}}
\renewcommand{\vec}[1]{\mathbf{#1}}
\newcommand{\pl}{Prékopa–Leindler}
\newcommand{\ind}[1]{\ensuremath{\mathbb{I}_{#1}}}
\newmdenv[linecolor=red, frametitle=Caution!, backgroundcolor=yellow!40!white, outerlinewidth=2pt, roundcorner=10pt, ]{infobox}
\newcommand{\kostka}[2]{\ensuremath{K_{#1,#2}}}
\newcommand{\opart}[2]{\ensuremath{P_{#1,#2}}}
\renewcommand{\H}{\ensuremath{\mathcal{H}}}
\newcommand{\GT}{\ensuremath{\mathrm{GT}}}
\newcommand{\tpGT}{\ensuremath{{\mathrm{pGT}}}}
\newcommand{\pGT}{\ensuremath{\widetilde{\mathrm{pGT}}}}
\newcommand{\tp}{\ensuremath{\tilde{p}}}
\newcommand{\SH}{\ensuremath{\mathrm{SH}}}
\newcommand{\pSH}{\ensuremath{\mathrm{pSH}}}
\NewDocumentCommand{\gap}{O{\lambda}}{\ensuremath{{#1}_{\mathrm{gap}}}}
\newcommand{\bdrad}{\ensuremath{\varepsilon}}
\newcommand{\rep}[1]{\ensuremath{\mathsf{rep}\inp{#1}}}
\newcommand{\tV}{\ensuremath{\tilde{V}}}
\newcommand{\grantacknowledgement}{This work was supported by the Department of
  Atomic Energy, Government of India [project number RTI4001]; by the
  Infosys-Chandrasekharan virtual center for Random Geometry at the Tata
  Institute of Fundamental Research; by a Swarna Jayanti fellowship to HN; and
  by the Science and Engineering Research Board [grant number MATRICS
  MTR/2023/001547].  The contents of this paper do not necessarily reflect the
  views of the funding agencies listed above.}
\begin{document}

\title{Deterministically approximating the volume of a Kostka polytope}
\author{Hariharan Narayanan}
\address{School of Technology and Computer Science, Tata Institute of Fundamental Research, Mumbai, Maharashtra, India}
\email{hariharan.narayanan@tifr.res.in}

\author{Piyush Srivastava}
\address{School of Technology and Computer Science, Tata Institute of Fundamental Research, Mumbai, Maharashtra, India}
\email{piyush.srivastava@tifr.res.in}

\date{}
\pagenumbering{gobble}
\maketitle
\begin{abstract}
  Polynomial-time deterministic approximation of volumes of polytopes, up to an
  approximation factor that grows at most sub-exponentially with the dimension,
  remains an open problem.  Recent work on this question has focused on
  identifying interesting classes of polytopes for which such approximation
  algorithms can be obtained.  In this paper, we focus on one such class of
  polytopes: the \emph{Kostka polytopes}.  The volumes of Kostka polytopes
  appear naturally in questions of random matrix theory, in the context of
  evaluating the probability density that a random Hermitian matrix with fixed
  spectrum $\lambda$ has a given diagonal $\mu$ (the so-called \emph{randomized
    Schur-Horn problem}): the corresponding Kostka polytope is denoted
  $\mathrm{GT}(\lambda, \mu)$.  We give a polynomial-time deterministic algorithm for
  approximating the volume of a ($\Omega(n^2)$ dimensional) Kostka polytope
  $\mathrm{GT}(\lambda, \mu)$ to within a multiplicative factor of
  $\exp(O(n\log n))$, when $\lambda$ is an integral partition with $n$ distinct parts, with
  entries bounded above by a polynomial in $n$, and $\mu$ is an integer vector
  lying in the interior of the permutohedron (i.e., convex hull of all
  permutations) of $\lambda$.  The algorithm thus gives asymptotically correct
  estimates of the log-volume of Kostka polytopes corresponding to such
  $(\lambda, \mu)$.  Our approach is based on a partition function interpretation of a
  continuous analogue of Schur polynomials.
 \end{abstract}

\tableofcontents

\newpage
\pagenumbering{arabic}
\section{Introduction}

The problem of computing the volume of a convex body, and even more
specifically, that of \emph{convex polytopes}, is a widely-studied fundamental
algorithmic problem.  It was proved by Dyer and
Frieze~\cite{dyerComplexityComputingVolume1988} that exactly computing the
volume of a polytope specified by a system of linear inequalities (with binary
rational coefficients) is \#P-hard.  Soon thereafter, the seminal paper of Dyer,
Frieze, and Kannan~\cite{dyerRandomPolynomialtimeAlgorithm1991} gave a
fully-polynomial randomized approximation scheme (FPRAS) for the volume of
convex bodies specified via ``well-guaranteed'' membership oracles or
separation oracles (and hence, in particular, also for polytopes specified by a
system of linear inequalities).  Since then, the question of \emph{randomized}
approximation of volumes of convex bodies in general, and convex polytopes in
particular, has been explored extensively; a variety of algorithms for the
problem have been analyzed, and these analyses have lead to the development of
new ideas both in convex geometry and in the analysis of Markov chains: see,
e.g., \cite{leeManifoldJoysSampling2022} for a survey of selected results and
open problems.

The problem of volume estimation, however, has also served as an important
testing ground for the power of randomness in algorithm
design~\cite{dyerComputingVolumeConvex1991}.  It is known that in the membership
oracle model, no deterministic polynomial time algorithm can approximate the
volume of a convex body in \(\R^d\) to better than an exponential
factor~\cite{elekesGeometricInequalityComplexity1986,baranyComputingVolumeDifficult1987}:
this is an unconditional result.  More precisely, it is shown in
\cite{baranyComputingVolumeDifficult1987} that with $N = d^{\Theta(1)}$ queries
to the membership oracle, one cannot deterministically estimate the volume
better than within a factor of $(\gamma d/\ln N)^{d/2}$, where $\gamma > 0$ is an absolute
constant. Deterministic polynomial time algorithms that essentially match this
lower bound are known in the well-guaranteed membership oracle model: in
particular, it is known that the volume of a convex body in \(\R^d\), given by
such an oracle, can be deterministically approximated up to an approximation
factor of \(\exp(O(d \log d))\), in time polynomial in \(d\)~\cite[p.~127 and
Figure 4.1]{GLSbook}.

The unconditional lower bounds of
\cite{elekesGeometricInequalityComplexity1986,baranyComputingVolumeDifficult1987}
for deterministic algorithms do not apply when the convex body is a polytope
specified by a system of linear inequalities.  Yet, to the best of our
knowledge, \(\exp(O(d\log d))\) still remains the best known factor of
approximation that polynomial time deterministic algorithms can achieve for
volume approximation of general polytopes in \(\R^d\).  In particular, it is not
known how to approximate the volume of an arbitrary polytope to within
\emph{subexponential} (in the dimension) factors using a deterministic
polynomial-time algorithm.  Work on deterministic approximation of volumes of
polytopes has therefore focused on understanding whether one can improve upon
the \(\exp(O(d \log d))\) approximation factor on special classes of polytopes,
starting with the early work of
Lawrence~\cite{lawrencePolytopeVolumeComputation1991} and
Barvinok~\cite{barvinokComputingVolumeCounting1993}.  Another important theme in
this line of work (including in work as recent as that of Barvinok and
Rudelson~\cite{BR22_quick}) has been that of developing semi-explicit formulas
for the volume of the polytopes being studied, perhaps involving an optimization
problem, and then studying how well the formula can be approximated (or perhaps
even exactly evaluated) by deterministic polynomial time algorithms.  This
approach also has the potential advantage of the resulting formula being useful
for analytical purposes in a manner that having a general volume approximation
algorithm, whether deterministic or randomized, might not have been.

\subsection{Our contribution}
In this paper, we contribute to this line of work, and focus on the
deterministic approximation of the volumes of \emph{Kostka polytopes}.  Let
\(\lambda\) be a non-increasing sequence of \(n\) real numbers, and let \(\mu\) be a
sequence of \(n\) real numbers so that \(\lambda\) and \(\mu\) have the same sum.  A
\emph{Gelfand Tsetlin (GT) pattern} with top row \(\lambda\) is an interlacing
triangular array of real numbers with \(\lambda\) as the top row, and the
\emph{Gelfand-Tsetlin polytope} \(\GT(\lambda)\) is the collection of all
Gelfand-Tsetlin patterns with top row \(\lambda\) (see \cref{def-kostka-polytope} for
references and a more formal definition, and \cref{fig:gt-pattern} for an
illustration).  The \emph{Kostka polytope} \(\GT(\lambda, \mu)\) is the section of
\(\GT(\lambda)\) consisting of those Gelfand-Tsetlin patterns in which, for each
\(1 \leq k \leq n\), the sum of the bottom \(k\) rows is equal to the sum of the first
\(k\) entries of \(\mu\).

The volumes of Kostka polytopes appear naturally in questions of random matrix
theory in the context of what is called the randomized Schur-Horn
problem. More precisely, the following is known (see, e.g., \cite{CoqZub},
\cite[Section 4.6]{baryshnikov_gues_2001} or \cite[Section 2]{LMV21}): first,
given a Borel set $ E \subset \R^n$, let $\mathbb{Q}_n(E)$ be the probability that
$E$ contains the diagonal of a random matrix sampled from the unitarily
invariant measure supported on the set of Hermitian matrices with fixed spectrum
$\la$.  On the other hand, suppose that we sample a random Gelfand-Tsetlin
pattern
${(x_{ij})}_{1 \leq j\leq i\leq n}$ from the normalized Lebesgue measure on the polytope
$\GT(\la)$ of all triangular interlacing arrays with top row
${(x_{n j})}_{1 \leq j \leq n} = \lambda$. Let $ \mu \in \R^n$ be a random vector whose
$k^{th}$ coordinate equals the random variable
$(\sum_{j \leq k} x_{k\,j}) - (\sum_{j \leq k-1} x_{k-1\,j})$. Given a Borel set
$ E \subset \R^n$, let $\mathbb{P}_n(E)$ denote the probability that
$\mu \in E$.  Then, \[\mathbb{P}_n(E) = \mathbb{Q}_n(E).\]
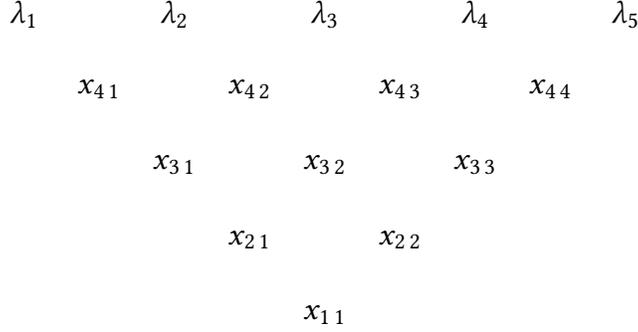
\begin{figure}[t]
  \centering
\begin{tikzpicture}
\tikzset{every node/.style={anchor=center}}

\renewcommand{\n}{5}  

\foreach \i in {1,...,\n} {
        \node (lambda\i) at (2*\i-\n-1, \n-1) {$\lambda_{\i}$};
    }

\foreach \row in {1,...,\n} {
        \foreach \col in {1,...,\row} {
            \ifnum\row<\n
\pgfmathsetmacro{\x}{2*\col-\row-1}
                \pgfmathsetmacro{\y}{\row-1}

\node at (\x, \y) {$x_{\row\,\col}$};
            \fi
        }
    }

  \end{tikzpicture}
  \caption{A Gelfand-Tsetlin pattern with top row \(\lambda\).  The top row is
    non-increasing from left to right, and each element in the other rows has a value
    that is at most the value of the element to it left in row above, and at least
    the value of the element to its right in the row above.}
  \label{fig:gt-pattern}
\end{figure}

The volume of the Gelfand-Tsetlin polytope \(\GT(\lambda)\) equals a ratio of two
Vandermonde determinants~\cite[Lemma 1.12]{baryshnikov_gues_2001}. It therefore
turns out that the probability density of \(\mathbb{P}_n\) (with respect to an
appropriate Lebesgue measure) is equal to the volume of the Kostka polytope
$\GT(\la, \mu)$ up to an exactly and efficiently computable multiplicative
constant that is independent of $\mu$.
Our goal is to \emph{deterministically} compute an approximation to the volume
of $\GT(\lambda, \mu)$ in time polynomial in $n, \abs{\lambda},$ and
$\rep{\lambda, \mu}$, where \(\abs{\lambda}\) is the sum of the entries of
\(\lambda\) and $\rep{\cdot}$ represents the input length of a collection of rational
numbers when specified as pairs of co-prime integers in binary.
(This dependence of the runtime on $\abs{\lambda}$ is analogous to that obtained
by Barvinok and Fomin~\cite[Section 4.2]{barvinok_sparse_1997} in a related
context.)

\paragraph{\textbf{Main result.}} We now describe our main result, \cref{thm:main}, in
the important special case when \(\lambda\) is a strictly decreasing \(n\)-tuple of
integers with entries bounded above by some polynomial in \(n\), and \(\mu\) is an
integer point lying in the relative interior of the permutohedron of \(\lambda\)
(i.e., the convex hull of all permutations of
\(\lambda\)).  In this case, our result states that for $S_\la(x)$ generically given by
$\frac{\det[\exp(x_i\la_j)]_{1\leq i,j \leq n}}{V(x)},$ and defined everywhere
in $\R^n$ by continuity,\footnote{Here,
  $V(x) \defeq \prod_{1 \leq i < j \leq n}(x_i - x_j)$ is a Vandermonde
  determinant.}
\begin{equation}
  \begin{aligned}[t]
    \frac{\sqrt{(n-1)!} \cdot \exp(-n)}{\lambda_1^{n-1}n^{2n}}
    &\leq \frac{\lvol{\GT(\la, \mu)}}{\inf_{x \in \R^n} S_{\lambda}(x) \exp(- x^T\mu)}\\
    & \leq {
      \sqrt{(n-1)!} \cdot \exp(n/2) \cdot \Gamma\inp{\frac{n+1}{2}}}
      \cdot \inp{\frac{16 \lambda_1 n^3}{\sqrt{\pi}}}^{n-1}.
  \end{aligned}\label{eq-main-result}
\end{equation}
Further, in \cref{sec:comp-aspects} (see \cref{thm-main-algo}), we show that in
the above setting, one can obtain a \((1 \pm 2^{-k})\)-factor multiplicative
approximation to the quantity
\(\inf_{x \in \R^n} S_{\lambda}(x) \exp(- x^T\mu)\) in time polynomial in \(n\)
and \(k\).  (Note that in the above setting, \(\abs{\lambda}\) and
\(\rep{\mu, \lambda}\) are both bounded from above by a polynomial in \(n\).)

\begin{remark}
  Note that the approximation ratio (in this case, the ratio of the upper and
  lower bounds above) is at most
  \(\exp\inp{n\cdot\inp{\frac{11}{2}\log n + 2\log \lambda_1 + O(1)}}\).  Thus, when
  \(\lambda_1\) is bounded above by a polynomial in \(n\) (as stipulated above), the
  approximation ratio is sub-exponential in the dimension of
  \(\GT(\lambda, \mu)\), which is \(\Theta(n^2)\).
\end{remark}

\begin{remark}
  As we discuss in more detail in \cref{sec:defin-prel} below, when \(\lambda\) and
  \(\mu\) have integer entries, there is a close connection between the Kostka
  polytope \(\GT(\lambda, \mu)\) and the well-studied \emph{Kostka numbers}
  $K_{\lambda, \mu}$, which count the number of semi-standard Young tableaux with shape
  \(\lambda\) and type \(\mu\), and therefore appear as coefficients in the Schur
  polynomial corresponding to \(\lambda\) (see \cref{eq-schur-poly} below). In
  particular, when \(\lambda\) and \(\mu\) have integer entries,
  \(K_{\lambda, \mu}\) is the number of integral points in the polytope
  \(\GT(\lambda, \mu)\).  This connection is what makes the case of integral
  \(\lambda\) and \(\mu\) specially important.
\end{remark}

\begin{remark}
  During the preparation of this manuscript, we were informed by Jonathan Leake
  that there is a method of approximating the Kostka numbers
  \(K_{\lambda, \mu}\) that proceeds by putting together a result of Br{\"a}nd{\'e}n,
  Leake and Pak~\cite[Theorem 5.10]{BLP} on the approximability of coefficients
  of denormalized Lorentzian polynomials, with the fact that the Schur
  polynomials belong to this class (a result of Huh, Matherne, Mészáros, and
  St.~Dizier~\cite{HMMSD22}).  For Kostka numbers, this yields a multiplicative
  $\exp(O(n\log n))$ approximation when the sizes of the parts of the partition
  are polynomial in $n$. By taking semiclassical limits with care, quantitative
  bounds on the volumes of Kostka polytopes that are similar in flavor to those
  of the present paper can be consequently obtained, but we do not pursue this
  here, and focus instead on a comparatively more direct and elementary approach
  that does not require the machinery of (denormalized) Lorentzian polynomials.
\end{remark}

\subsection{Related work}
As mentioned above, the Kostka polytope is closely related to the Kostka
numbers.  The computation of Kostka numbers with \(\lambda\) and \(\mu\)
specified in binary is known to be \#P-complete~\cite{Nar06}.  Thus, under
standard complexity theoretic assumptions, it is not possible to have a
polynomial time algorithm that computes Kostka numbers exactly. This hardness
result extends to the problem of exactly computing the volume of the Kostka
polytope, at least when \(\lambda\) and \(\mu\) are specified in binary.
Concerning the computation of Kostka numbers, it is known from the work of
Barvinok and Fomin~\cite[Corollary 4.3]{barvinok_sparse_1997} that the
collection of {\it all} the Kostka numbers corresponding to \(\lambda\) (i.e.,
as \(\mu\) varies) can be computed in time that is polynomial in the input and
output description lengths.  Note that because the number of Kostka numbers is
exponential in the input bitlength, the result of Barvinok and Fomin together
with the \#P-completeness of Kostka numbers does not contradict the (widely
believed) conjecture that \#P-complete quantities cannot be computed in
polynomial time.

As already mentioned above, it is known since the seminal work of Dyer, Frieze
and Kannan~\cite{dyerRandomPolynomialtimeAlgorithm1991} that using a
\emph{randomized} algorithm, the volume of a polytope in \(\R^n\) can be
approximated to a \((1\pm\epsilon)\) multiplicative factor in time polynomial in
\(1/\epsilon\) and the description length of the polytope, but the problem of
efficiently approximating the volume of polytopes using a \emph{deterministic}
algorithm remains open, notwithstanding a long line of work marked by several
interesting ideas.  We refer to the papers
\cite{barvinokComputingVolumeCounting1993} and \cite{BR22_quick} for references.
The latter paper, by Barvinok and Rudelson, finds the volumes of codimension
$\ell$ sections of an orthant to within a multiplicative factor of
$\exp(O(\ell))$.
Unfortunately, this result would not allow us to provably compute
the volume of an arbitrary Kostka polytope to within a sub-exponential
multiplicative factor in the dimension, because when one expresses the Kostka
polytope \(GT(\lambda, \mu)\) as a codimension \(\ell\) section of the orthant, one gets
$\ell = \Theta(n^2)$, which is of the same order as the dimension.  Here, we recall
again that previous works such as
\cite{BR22_quick,barvinokAsymptoticEstimatesNumber2009,barvinokMaximumEntropyGaussian2010}
focus not just on designing a deterministic volume approximation algorithm, but
also on providing a fairly explicit formula, perhaps involving the solution to an
optimization problem, that gives a good estimate of the volume.  Our result
also fits in this framework.

Questions concerning the positivity and more generally the computation of
representation theoretic multiplicities \cite{Panova1, Pak_Ikenmeyer} such as
Kostka numbers, Littlewood-Richardson coefficients \cite{BI, GCT3}, Kronecker
coefficients \cite{IMW17} and Plethysm coefficients \cite{IP17, FI20} have
arisen in the recent past in the context of Geometric Complexity Theory
\cite{GCT}.  It was shown that certain Lie theoretic multiplicities (when the
dimension of the Lie group is constant) could be computed exactly in polynomial
time in \cite{christandl}.  There has also been a significant amount of interest
in asymptotic algebraic combinatorics, which studies the asymptotic behavior of
these multiplicities; see \cite{Panova2} for a comprehensive survey.  Our
result, by providing a deterministic algorithm to approximate the volumes of
Kostka polytopes, which are semiclassical limits of Kostka numbers, also relates
to this body of work.

We conclude this section with a brief account of the discrete analog of
deterministic volume approximation: deterministic approximate counting of
discrete structures.  One of the first problems to be studied in this context
was the deterministic polynomial-time approximation of the permanent of
non-negative matrices~\cite{linialDeterministicStronglyPolynomial2000}.  Here,
the best known approximation factor, for an \(n \times n\) matrix, is
\(\sqrt{2}^n\), due to \cite{anariTightAnalysisBethe2021}, building upon
\cite{gurvitsBoundsPermanentApplications2014} and
\cite{vontobelBethePermanentNonnegative2013}.  Deterministic approximation based
on convex optimization has also been applied to the problem of approximating
permanents of positive semi-definite matrices (potentially with signed entries),
and approximation factors of the form \(c^n\) have also been obtained for that
case~\cite{anariSimplyExponentialApproximation2017,yuanMaximizingProductsLinear2022,ebrahimnejadApproximabilityPermanentPSD2024}:
note that it is not \emph{a priori} clear how to obtain any randomized Markov
chain Monte Carlo algorithms for permanents when entries can be negative
(but see also \cite{barvinokRemarkApproximatingPermanents2021} where such a
randomized algorithm is obtained for a special case via an integral identity).

Another related line of work in discrete approximate counting has applied
methods such as decay of correlations~\cite{bandyopadhyay_counting_2008,Weitz},
polynomial interpolation~\cite{barvinok2017combinatorics}, and the ``linear
programming'' method~\cite{moitraApproximateCountingLovasz2019} to obtain fully
polynomial time approximation schemes for the approximation of partition
functions.  More recently, some of these methods have also been applied to the
problem of volume
approximation~\cite{gamarnikComputingVolumeRestricted2023,bencsApproximatingVolumeTruncated2024,guoDeterministicApproximationVolume2025}
for well-studied polytopes such as the independent set and matching polytopes of
graphs: however, so far, this progress appears to be restricted to truncated
versions of these polytopes.

\subsection{Technical overview and organization}
We start in \cref{sec:defin-prel} with definitions of the Kostka and Schur-Horn
polytopes and some associated objects, including projected versions of these
polytopes.  We also collect in that section various important results and
observations.  The main technical content of the paper then follows, and is
divided into three main sections, which we now proceed to describe.

\paragraph{\textbf{Lower bound on the volume.}} Our first step, carried out in
\cref{sec-cont-anal-schur}, is to develop a good lower bound on the volume of
the Kostka polytope \(\GT(\lambda, \mu)\).  To do so, we introduce the main technical
ingredient in our approach: a continuous analogue of the Schur polynomial that
has been studied, e.g., in
\cite{shatashvili_correlation_1993,khare_sign_2021,gorinGaussianFluctuationsProducts2022}.
It turns out to be important for our purposes that we can interpret this
continuous analogue \(S_{\lambda}(x)\) as a (parameterized) Gibbsian partition
function. Consequences of this fact underlie the proof of
\cref{prop:correct-mean}, the main result of \cref{sec-cont-anal-schur}, where
we show that for an appropriate choice \(x^{*} = x^{*}(\lambda, \mu)\) of \(x\), the
value of \(S_{\lambda}(x^{*})\) can be used to give a good lower bound on the volume
of \(\GT(\lambda,\mu)\).  In particular, because of the Gibbsian form of
\(S_{\lambda}\) (see \cref{lem:geom-quant}), this \(x^{*}\) is the solution to a
convex optimization problem: a fact which is of vital importance when we later
turn to computation.  Another ingredient in the proof of
\cref{prop:correct-mean} is the log-concavity of the volume of
\(\GT(\lambda, \mu)\) when considered as a function of \(\mu\)
(~\cref{prop:log-concavity-of-vol}), combined with a result of Fradelizi that
relates the value of a log-concave density at its mean to the value at its mode.
A more detailed summary of this part of the paper appears at the beginning of
\cref{sec-cont-anal-schur}.

\paragraph{\textbf{Upper bound on the volume.}} In \cref{sec-geom-pro}, we complement the
results of \cref{sec-cont-anal-schur} by providing an upper bound on the volume
of \(\GT(\lambda, \mu)\).  The arguments here depend upon the geometric properties of
Kostka polytopes, and the main question turns out to be to understand how the
volume of \(\GT(\lambda, \mu)\) changes as \(\mu\) is perturbed.  This is analyzed in
\cref{lem:2.12}.  Towards this, an important ingredient is to establish that the
polytope contains a point that is somewhat far, in terms of certain conditioning
properties of the pair \((\lambda , \mu)\) (such as the minimum gap between consecutive
entries of \(\lambda\), and how far \(\mu\) is from the boundary of the permutohedron of
\(\lambda\)), from the bounding hyperplanes of the polytope (see
\cref{lem:2.10,prop-pgt-ball}).

\paragraph{\textbf{Computing the estimate.}} Combining the upper and lower bounds, we
obtain our main result, \cref{thm:main}.  It then remains to show how to
efficiently compute the estimate given by \cref{thm:main}.  The upper and lower
bounds on \(\GT(\lambda, \mu)\) obtained in \cref{sec-cont-anal-schur,sec-geom-pro} can
be expressed in terms of the value of \(S_{\lambda}\) at the point \(x^*\) mentioned
above.  As also mentioned above, this \(x^{*}\) is the solution to a convex
optimization problem.  However, algorithmically solving this convex optimization
problem requires an evaluation oracle for \(S_{\lambda}\).

To get such an oracle, we use another important property of \(S_{\lambda}\): for given
parameter \(x\), the continuous analogue \(S_{\lambda}(x)\) can be expressed as a
ratio of two determinants: see \cref{eq:s-det-unequal}.  However, two
difficulties arise in directly applying this determinantal formula.  The first
difficulty is that the formula is valid only when \(x\) has \emph{distinct}
entries.  This, however, can be easily handled, e.g., by considering carefully
chosen perturbations of \(x\): see the proof of \cref{prop-compute-g} for
details.  The second difficulty appears to be more fundamental: this is the fact
that the determinants in the formula are of matrices that have entries of the
form \(\exp(\lambda_ix_j)\) (where \(\lambda_i\) is an entry of \(\lambda\), and
\(x_j\) is an entry of \(x\)).  Note that the bit complexity of writing down a
good rational approximation to \(\exp(\lambda)\), even when \(\lambda\) is an integer, can
be exponential in the binary representation length of \(\lambda\).  It is mostly
because of this issue that our algorithm has a polynomial dependence on the
length \(\abs{\lambda}\) of \(\lambda\), rather than on its binary representation length.

Equipped with an approximate evaluation oracle for \(S_{\lambda}\)
(\cref{prop-compute-g}), we are then able to apply standard results on convex
optimization with a weak membership oracle~\cite[see \cref{thm-gls-optimizer}
below]{YN76,GLSbook} to perform the evaluation of the volume estimate.
 
\subsection{Discussion}

The Schur-Horn problem (i.e., when does there exist a Hermitian matrix with
prescribed spectrum and diagonal) can be seen as a special case of the more
general question of understanding the orbits of certain actions of a Lie group
on elements of its Lie algebra (in the case of the Schur-Horn problem, one is
looking at the conjugation action of the unitary group on real diagonal
matrices).  The latter question also has connection to marginal consistency
problems in quantum information theory~\cite{Collins-McSwiggen}. The question of approximating the density of
the pushforward measure efficiently can be asked for compact groups other than
the unitary group.

Another related open question is to deterministically approximate, within a multiplicative factor of $\exp(o(n^2))$,  the Horn density function at a given spectrum in $\R^n$. This function measures the probability density that the spectrum of the sum of two independent random Hermitian matrices from unitarily invariant distributions,  whose spectra are $\la$ and $\mu$ respectively,  equals $\nu$, where $\la, \mu$ and $\nu$ are in $\R^n$ with non-decreasing coordinates. This density can be expressed in terms of the volumes of the hive polytopes (of dimension $\sim \frac{n^2}{2}$) introduced by Knutson and Tao \cite{honey, Coquereaux2020}.

\section{Definitions and Preliminaries}
\label{sec:defin-prel}
We use $c, C$ etc to denote universal constants whose value may differ from
occurrence to occurrence.  Throughout the paper, we assume that the parameter
$n$ is an integer that is at least $3$, unless otherwise specified.  By a slight
abuse of a usual notation for partitions, we will denote the sum of the entries
of any finite length vector \(v\) by \(\abs{v}\).  We will denote by
\(\norm{v}\) the standard Euclidean norm of such a vector.

\subsection{Kostka and Schur-Horn polytopes}

\begin{definition}[\textbf{Partitions}] A \emph{partition}
  $\lambda = (\lambda_1, \lambda_2, \dots, \lambda_k)$ is a non-increasing
  tuple of non-negative real numbers. Such a $\lambda$ is said to be a
  \emph{partition of $\abs{\lambda} \defeq \sum_{i=1}^k \lambda_i$ into $k$
    parts}.  A partition is said to be \emph{integral} if its entries are
  integers.  Given a non-negative integer $m$, we further denote by
  $\opart{m}{n}$ the set of all integral partitions of $m$ into $n$ parts.
\end{definition}

We denote by $s_\lambda(x_1, x_2, \dots, x_n)$ the generating polynomial (known
as the \emph{Schur polynomial}) of semi-standard Young tableaux with shape $\lambda$ (where $\lambda$
is an integral partition) and with entries in $[n]$.  Thus,
$s_\lambda(x_1, x_2, \dots, x_n)$ is a homogeneous polynomial of degree
$\abs{\lambda}$, and can be written as
\begin{equation}
  \label{eq-schur-poly}
  s_\lambda(x_1, x_2, \dots x_n) \defeq \sum_{\mu \in \Z_{\geq 0}^{n} \st \abs{\mu} = n}
  \kostka{\lambda}{\mu} x^\mu,
\end{equation}
where the \emph{Kostka number} $\kostka{\lambda}{\mu}$ is the number of semi-standard
Young tableaux with shape $\lambda$ and type $\mu$.\footnote{Recall that a
  \emph{semi-standard Young tableau} with \emph{shape} \(\lambda\) is a left-aligned,
  two-dimensional ragged array of integers in which the \(i\)th row from the top
  has length \(\lambda_i\), and such that the integers along each row are weakly
  increasing from left to right, while the integers along each column are
  strictly increasing from top to bottom.  Such a tableau is said to have
  \emph{type} (or \emph{weight}) \(\mu\) if the total number of times any integer
  \(k\) appears in the tableau is exactly equal to \(\mu_k\).  See, e.g.,
  \cite[p.~309]{Stanley1999}.} The Schur polynomials are efficiently computable
due to the classical Jacobi-Trudi identity, which we now describe.  Let
$h_{\ell}(x_1, x_2, \dots, x_n)$ denote the complete symmetric polynomial of degree
$\ell$ in the variables $x_1, x_2, \dots, x_n$: thus
$h_{\ell}(x_1, x_2, \dots, x_n)$ is the sum of all $\binom{n + \ell - 1}{\ell}$ monomials
of degree $\ell$ in the variables $x_1, x_2, \dots, x_n$.  The Jacobi-Trudi
identity is then given by~(see, e.g.,~\cite[p.~75]{fulton_1996}):
\begin{equation}
  s_{\lambda}(x_1, x_2, \dots, x_n) = \det\insq{h_{\lambda_i + j - i}(x_1, x_2,
    \dots, x_n)}_{1 \leq i, j \leq n}.
\end{equation}
The Jacobi-Trudi identity can also be written as (see,
e.g.,~\cite[p.~75]{fulton_1996}):
\begin{equation}
  \label{eq-jacobi-trudi-ratio-of-dets}
  s_{\lambda}(x_1, x_2, \dots, x_n) \cdot V(x_1, x_2, \dots, x_n)
  =
  \det\insq{x_i^{\lambda_j + n - j}}_{1 \leq i, j \leq n},
\end{equation}
where
$V(x_1, x_2, \dots, x_n) \defeq \prod_{1 \leq i < j \leq n}(x_i -
x_j)$ is a Vandermonde determinant.

\begin{definition}[\textbf{Kostka/Gelfand-Tsetlin (GT) polytope (see,
  e.g.,~\cite{Alexandersson2020-GT} or \cite[Definitions 1.7 and 1.10]{baryshnikov_gues_2001})}]\label{def-kostka-polytope}
A \emph{Gelfand-Tsetlin (GT)} pattern with $n$ rows is a triangular array
$\vec{x} = \inp{x_{i\,j}}_{\substack{1 \leq i \leq n\\ 1 \leq j \leq i}}$ of
real numbers satisfying
  \begin{gather}
x_{i+1\, j}\geq x_{i\, j} \geq x_{i+1\, j + 1}
    \text{ for all } 1 \leq i \leq n - 1 \text{ and } 1 \leq j \leq i.
    \label{eq:gt-cons}
  \end{gather}
  Denote by \(x_{{i\, \cdot}}\) the \(i\)th row of the pattern, so that
  $\abs{x_{i\, \cdot}}$ denotes the sum $\sum_{1 \leq j \leq i}x_{i\,j}$.  Note that if a row
  \(x_{j\, \cdot}\) of a GT-pattern is non-negative, then each lower row
  $x_{i\, \cdot}$ for $i < j$ is also non-negative, and satisfies
  \(\abs{x_{i\, \cdot}} \leq \abs{x_{j\,\cdot}}\).

The set of all \GT{}
  patterns with $n$ rows is denoted as $\GT_n$, and forms a convex cone in
  $\R^{n(n+1)/2}$.  Given a non-increasing $n$-tuple $\lambda$ of real numbers, we
  denote by $\GT_n(\lambda)$ (and often just by $\GT(\lambda)$) the section of this cone of
  \GT{} patterns in which the top row is fixed to be $\lambda$, i.e., in which
  $x_{n,j} = \lambda_{j}$ for $1 \leq j \leq n$.  For technical reasons, it is convenient to
  define the set of points in $\GT(\lambda)$ with the top row removed:
  \begin{equation}
    \label{eq:17}
    \tpGT(\lambda) \defeq \inb{
      \inp{x_{i\,j}}_{
        \substack{
          1 \leq i \leq n - 1\\
          1 \leq j \leq i}
      } \st \vec{x} \in \GT(\lambda)} \subseteq \R^{n(n-1)/2}.
  \end{equation}
  We will denote the associated map from $\GT(\la)$ to $\tpGT(\la)$ by $p$.
  The volume of $\GT(\lambda)$ (according to the $n(n-1)/2$-dimensional
  Hausdorff measure in $\R^{n(n+1)/2}$) is then given by~(see, e.g.,~\cite[Lemma
  1.12]{baryshnikov_gues_2001}):
  \begin{equation}
    \label{eq:gt-volume}
    \H^{n(n-1)/2}\inp{GT(\lambda)} = \lvol{\tpGT(\lambda)}
    =  \prod_{1 \leq i < j \leq n}\frac{\lambda_i -
      \lambda_j}{j - i}.
  \end{equation}

Given two $n$-tuples $\lambda$ and $\mu$ of real numbers, where $\lambda$ is
  non-increasing, $\GT_n(\lambda, \mu)$ (usually written just as
  $\GT(\lambda, \mu)$) denotes the section of $\GT(\lambda)$ obtained by
  imposing the following further equalities:
  \begin{align}
    x_{11}
    & = \mu_1, \text{ and } \label{eq:gt-section-first} \\
    \abs{x_{i\, \cdot}} - \abs{x_{i - 1\, \cdot}}
    & = \mu_i
      \text{ for }
      2 \leq i \leq n.\label{eq:gt-section}
  \end{align}
  (Recall that $\abs{x_{k\, \cdot}}$ denotes the sum $\sum_{1 \leq j \leq k}x_{k\, j}$.)
  Again for technical reasons, it is sometimes convenient to consider the sets
  \begin{align}
    \tpGT(\lambda, \mu)
    &= \inb{\inp{x_{i\,j}}_{\substack{1 \leq i \leq n -
      1\\1\leq j \leq i}} \st \vec{x} \in \GT(\lambda, \mu)}
    \subseteq \R^{n(n-1)/2}\text{, and} \\
    \pGT(\lambda, \mu)
    &= \inb{\inp{x_{i\,j}}_{\substack{2 \leq i \leq n -
      1\\1\leq j \leq i - 1}} \st \vec{x} \in \GT(\lambda, \mu)}
    \subseteq \R^{(n-1)(n-2)/2}.
  \end{align}
  We will denote the associated map from $\GT(\lambda, \mu)$ to
  $\pGT(\lambda, \mu)$ as $\tp$.  From this, it is easy to write down the
  defining linear inequalities for $\pGT(\lambda, \mu)$, which we do below.
  \begin{gather}
    \sum_{k=1}^{n-1}\mu_k - \lambda_{n-1} \leq \sum_{j=1}^{n-2} x_{n-1\,j}
    \leq \sum_{k=1}^{n-1}\mu_k - \lambda_n,\label{eq:3}\\
    \lambda_{j+1} \leq x_{n-1\, j} \leq \lambda_j
    \text { for } 1 \leq j \leq n - 2,
    \label{eq:5}\\
    x_{i+1\,j} \leq x_{i\,j} \leq x_{i+1\,j}
    \text{ for } 2 \leq i \leq n -2
    \text{ and } 1 \leq j \leq i - 1,
    \label{eq:6}\\
    \sum_{j=1}^i x_{i+1\,j} - \sum_{j=1}^{i-1}x_{i\,j} \geq \mu_{i+1}
    \text{
      for } 1 \leq i \leq n - 2, \text{ and }
    \label{eq:7}\\
    x_{i+1\,i} + \sum_{j=1}^{i-1}x_{i\,j} \geq
    \sum_{k=1}^i\mu_k \text{ for } 1 \leq i \leq n - 2.
    \label{eq:10}
  \end{gather}
  We record a simple consequence of these inequalities.  \(\SH(\lambda)\) in the
  following proposition denotes the \emph{Schur-Horn} polytope of \(\lambda\) (see
  \cref{def-schur-horn-polytope}).  In particular, it is known that
  \(\GT(\lambda, \mu)\) is non-empty if and only if \(\mu \in \SH(\lambda)\).
  \begin{proposition}
    \label{prop-hyperplane-move}
    Let $\lambda$ be a partition with $n$ parts and suppose that $\mu \in
    \SH(\lambda)$. If $\mu'$ is such that
    $\norm{\mu - \mu'} \leq \delta$, then, in going from $\pGT(\lambda, \mu)$ to
    $\pGT(\lambda, \mu')$, each of the bounding hyperplanes of
    $\pGT(\lambda, \mu)$ is translated perpendicular to itself by a distance of
    at most $\frac{\sqrt{n-1}}{\sqrt{n-2}}\delta$.
  \end{proposition}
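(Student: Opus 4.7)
The plan is to go through the defining linear inequalities of $\pGT(\lambda, \mu)$, namely \eqref{eq:3}--\eqref{eq:10}, and check for each one the displacement of its bounding hyperplane as $\mu$ is perturbed. Only the constraints coming from \eqref{eq:3}, \eqref{eq:7}, and \eqref{eq:10} depend on $\mu$; the interlacing inequalities \eqref{eq:5} and \eqref{eq:6} are unchanged. For any affine hyperplane $\{x : a^{\tr} x = c\}$, a change of $\Delta c$ in the right-hand side translates it perpendicular to itself by $\abs{\Delta c}/\norm{a}$, so for each $\mu$-dependent constraint I will compute $\norm{a}$ and bound $\abs{\Delta c}$ in terms of $\norm[]{\mu - \mu'} \leq \delta$ via Cauchy--Schwarz.

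Concretely, I would treat the three families as follows. For the two hyperplanes in \eqref{eq:3}, the normal vector is the all-ones vector on the $n-2$ coordinates $x_{n-1\,1}, \dots, x_{n-1\,n-2}$, so $\norm{a} = \sqrt{n-2}$; the right-hand side involves $\sum_{k=1}^{n-1} \mu_k$, whose change is bounded by $\sqrt{n-1}\,\delta$ by Cauchy--Schwarz, yielding a perpendicular displacement of at most $\sqrt{n-1}/\sqrt{n-2}\cdot\delta$. For the constraint in \eqref{eq:7} indexed by $i$, the normal has $i$ coefficients $+1$ and $i-1$ coefficients $-1$, so $\norm{a} = \sqrt{2i-1} \geq 1$; the right-hand side is a single entry $\mu_{i+1}$, which changes by at most $\delta$, giving a displacement of at most $\delta/\sqrt{2i-1} \leq \delta$. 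For \eqref{eq:10} indexed by $i$, $\norm{a} = \sqrt{i}$ while the change in $\sum_{k=1}^i \mu_k$ is at most $\sqrt{i}\,\delta$, again giving a displacement of at most $\delta$.

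Finally, comparing the three bounds, the largest is the one coming from \eqref{eq:3}, namely $\sqrt{n-1}/\sqrt{n-2}\cdot\delta$, which dominates the trivial $\delta$ bound from the other two families since $n \geq 3$ forces $\sqrt{n-1}/\sqrt{n-2} \geq \sqrt{2} > 1$. There is no real obstacle here; the whole argument is a careful inspection of the explicit inequalities \eqref{eq:3}--\eqref{eq:10} and an application of Cauchy--Schwarz, with the only care required being to correctly count which coordinates of $\mu$ appear in each constraint and how many variables of $\pGT(\lambda, \mu)$ (after projecting out the rightmost diagonal $x_{i\,i}$) carry a nonzero coefficient in the corresponding normal vector.
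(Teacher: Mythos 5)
Your proposal is correct and follows essentially the same route as the paper's own proof: inspect each of the defining inequalities \eqref{eq:3}--\eqref{eq:10}, note that the coefficient vectors are $\mu$-independent so only parallel translations occur, bound the right-hand-side shift via Cauchy--Schwarz, divide by the norm of the normal vector, and take the maximum, which is attained by the constraints in \eqref{eq:3}. The only cosmetic difference is that you spell out the Cauchy--Schwarz step and the coordinate count explicitly, whereas the paper leaves them implicit.
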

  \begin{remark}
    An immediate consequence of the above proposition is that if an
    $\binom{n-1}{2}$-dimensional Euclidean ball of radius
    $r >\frac{\sqrt{n-1}}{\sqrt{n-2}}\delta$ centered at a point $z$ is
    contained in $\pGT(\lambda, \mu)$, then an $\binom{n-1}{2}$-dimensional
    Euclidean ball of radius $r - \frac{\sqrt{n-1}}{\sqrt{n-2}}\delta$ centered
    at the same point $z$ is contained in $\pGT(\lambda, \mu')$.
  \end{remark}
  \begin{proof}[Proof of \cref{prop-hyperplane-move}]
    The coefficients of the variables in the defining constraints
    (\cref{eq:3,eq:5,eq:6,eq:7,eq:10}) of \(\pGT(\lambda, \mu)\) do not depend upon
    \(\mu\).  It thus follows that in modifying \(\mu\) to \(\mu'\), each of the
    defining hyperplanes is translated perpendicular to itself.  Further, hyperplanes
    corresponding to \cref{eq:5,eq:6} do not change.  The right hand sides of
    the inequalities in \cref{eq:3} change by at most $\delta\sqrt{n-1}$, and the
    Euclidean norm of the coefficient vector for the corresponding hyperplanes
    is $\sqrt{n-2}$, so that the corresponding hyperplanes translate along their
    normal by a distance of at most $\frac{\sqrt{n-1}}{\sqrt{n-2}}\delta$.  A similar
    argument applied to \cref{eq:7} and \cref{eq:10} gives upper bounds of
    $\delta/\sqrt{2i-1}$ (for \(1 \leq i \leq n - 2\)) and $\delta$ respectively on the
    magnitude of the translation, so that the largest possible magnitude of the
    translation over all the defining hyperplanes is
    $\frac{\sqrt{n-1}}{\sqrt{n-2}}\delta$.
  \end{proof}

  The {volume} of $\GT(\lambda, \mu)$ according to the
  $\frac{(n-1)(n-2)}{2}$-dimensional Hausdorff measure in $\R^{n(n-1)/2}$ is
  denoted by $V_{\lambda, \mu}$, while that of $\pGT(\lambda, \mu)$ with respect
  to the Lebesgue measure on $\R^{(n-1)(n-2)/2}$ is denoted by
  $\tV_{\lambda, \mu}$.  It is then a simple consequence of the area
  formula~\cite[Theorem 3.2.3]{F96} that
  \begin{equation}
    \label{eq:vol-relation}
    \sqrt{(n-1)!}\cdot \tV_{\lambda,\mu} = V_{\lambda, \mu}.
  \end{equation}
  (See \cref{sec:comp-cref-vol} for details).  Note that for these volumes to be
  non-zero, it is necessary that (1) $\abs{\lambda} = \abs{\mu}$, and (2) the
  entries of $\lambda$ must be distinct.

  When both $\lambda$ and $\mu$ have only non-negative integer entries, integer points
  in $\GT(\lambda, \mu)$ are in one-to-one correspondence with semi-standard Young
  tableaux with shape $\lambda$ and type $\mu$.  (This correspondence is obtained by
  interpreting the $i$th row of a pattern with integral entries as the shape of
  the sub-tableaux consisting only of entries in $\inb{1, 2, \dots, i}$.)  In
  particular, the number of integral points in $\GT(\lambda, \mu)$ is then equal to the
  Kostka number $\kostka{\lambda}{\mu}$.  We thus refer to the polytope
  $\GT(\lambda, \mu)$ as a \emph{Kostka} polytope.
\end{definition}
\begin{definition}[\textbf{The Schur-Horn polytope}]\label{def-schur-horn-polytope}
  Let $\lambda$ be a non-increasing \(n\)-tuple of real numbers.  For $\sigma$ a
  permutation of $\{1, \dots, n\}$, we denote its action on $\la$ by
  \begin{equation}
    \sigma(\la_1, \dots, \la_n) := (\la_{\sigma(1)}, \dots, \la_{\sigma(n)}).
  \end{equation}
  We refer to the convex hull of all vectors $\sigma(\lambda)$ (as $\sigma$
  ranges over all $n$-permutations)
as the \emph{Schur-Horn} polytope $\SH(\la)$.  $\SH(\lambda)$ is also referred
  to as the \emph{permutohedron} of $\lambda$~\cite{Post09}. It is well known
  that $\SH(\lambda)$ has the following equivalent representation~\cite[Theorem
  1.1 and Remark 1.1]{M64}.
  \begin{equation}
    \SH(\lambda) = \inb{\mu \in \R_{\geq 0}^n \st \lambda \succeq \mu},
  \end{equation}
  where $\lambda \succeq \mu$ means that $\lambda$ \emph{majorizes} $\mu$ in the
  sense that for every permutation\footnote{It is enough to restrict to the
    permutation that puts the entries of $\mu$ in non-increasing order.} $\tau$
  of $\inb{1, 2, \dots, n}$,
  \begin{equation}
    \label{eq:majorization}
    \begin{aligned}[c]
      \sum\limits_{j=1}^{i}(\lambda_j - \mu_{\tau(j)})
      &\geq 0 \text{ for } 1 \leq i \leq n - 1,  \text{ and}
      \\
      \sum\limits_{j=1}^{n}(\lambda_j - \mu_j)
      &= 0.
    \end{aligned}
  \end{equation}
  Since majorization is a partial order on partitions, it follows that if
  $\lambda$ and $\mu$ are partitions such that $\lambda \succeq \mu$, then
  $\SH(\lambda) \supseteq \SH(\mu)$.

  Again, it is sometimes convenient to consider the set $\pSH(\lambda)$ defined
  as
  \begin{equation}
    \label{eq:22}
    \pSH(\lambda) \defeq \inb{p(\mu) \defeq (\mu_1, \mu_2, \dots, \mu_{n-1})
      \st
      \mu \in \SH(\lambda)
    } \subseteq \R^{n-1}.
  \end{equation}
  Here we abuse notation slightly to denote the canonical map from
  $\SH(\lambda)$ to $\pSH(\lambda)$ also by $p$ (the same notation was used for
  the map from $\GT(\lambda)$ to $\tpGT(\lambda)$ in \cref{eq:17}). We also define
  the inverse map $p_{\lambda}^{-1}: \pSH(\lambda) \rightarrow \SH(\lambda)$ as
  \begin{equation}
    \label{eq:63}
    p_{\lambda}^{-1}(\tilde{\nu}) \defeq (\tilde{\nu}_1, \tilde{\nu}_2, \dots,
    \tilde{\nu}_{n-1}, \abs{\lambda} - \abs{\tilde{\nu}}).
  \end{equation}
  A consequence of the area formula~\cite[Theorem 3.2.3]{F96} is the following
  equality between the $(n-1)$-dimensional Hausdorff volume of $\SH(\lambda)$
  and the volume of $\pSH(\lambda)$ (see \cref{sec:comp-cref-vol} for details):
  \begin{equation}
    \label{eq:schul-vol-rel}
    \H^{n-1}(\SH(\lambda)) = \sqrt{n}\cdot \lvol{\pSH(\lambda)}.
  \end{equation}
\end{definition}

The quantity $\lvol{\pSH(\lambda)}$ has been studied by Postnikov~\cite[Theorem
3.1]{Post09}.\footnote{Note that Postnikov defines the volume of $\SH(\lambda)$
  to be the volume of $\pSH(\lambda)$: see \cite[p.~1030]{Post09}.}  We record
the following corollary of Postnikov's formula.
\begin{proposition}[\textbf{Direct corollary of Theorem 3.1 of \cite{Post09}}]
  \label{prop-vol-psh} Let $\lambda$ be a partition with $n$ parts.  Then
  $\lvol{\pSH(\lambda)} \leq \lambda_1^{n-1}n^{2n}$.
\end{proposition}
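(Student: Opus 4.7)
The plan is to invoke Postnikov's Theorem~3.1 of~\cite{Post09}, which expresses $\vol{\pSH(\lambda)}$ as a non-negative combinatorial sum of monomials in $\lambda_1, \dots, \lambda_n$, each of total degree $n-1$, indexed by a family of combinatorial objects (labeled trees, or equivalently parking functions) of cardinality at most $n^{O(n)}$. Since both $\SH(\lambda)$ and its projection $\pSH(\lambda)$ are invariant under permutations of the entries of $\lambda$, one may reorder so that $\lambda_1 = \max_i \lambda_i$; then every monomial appearing in Postnikov's sum is bounded above by $\lambda_1^{n-1}$. Multiplying the per-term bound by the number of terms gives
\[
\vol{\pSH(\lambda)} \;\leq\; n^{O(n)} \cdot \lambda_1^{n-1} \;\leq\; \lambda_1^{n-1}\, n^{2n},
\]
with plenty of slack left over in the factor $n^{2n}$.

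A fully elementary alternative, requiring no appeal to Postnikov's formula, proceeds by a bounding-box argument. Every vertex of $\SH(\lambda)$ is a permutation of $\lambda$ and therefore has all coordinates lying in the interval $[0, \lambda_1]$ (again using the reordering $\lambda_1 = \max_i \lambda_i$ and the non-negativity assumption on partitions). By convexity, $\SH(\lambda) \subseteq [0, \lambda_1]^n$, and dropping the last coordinate yields $\pSH(\lambda) \subseteq [0, \lambda_1]^{n-1}$, so $\vol{\pSH(\lambda)} \leq \lambda_1^{n-1}$. This alternative route actually produces a strictly stronger bound than the proposition claims, but requires no input from Postnikov at all.

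The main obstacle is essentially clerical: one must be careful about Postnikov's normalization conventions (the footnote in the excerpt already notes that Postnikov's ``volume of $\SH(\lambda)$'' is what we are calling $\vol{\pSH(\lambda)}$), and about reading off the precise combinatorial index set in Theorem~3.1 of~\cite{Post09} in order to count its cardinality. No geometric or analytic difficulty is anticipated; either route (Postnikov's formula, or the bounding-box observation) yields the bound in essentially one line.
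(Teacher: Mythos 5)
Your second route — the bounding-box argument — is both correct and strictly stronger than what the paper proves. Since every vertex of $\SH(\lambda)$ is a permutation of the non-negative vector $\lambda$, all coordinates lie in $[0, \lambda_1]$ (with $\lambda_1$ the largest part), so by convexity $\SH(\lambda) \subseteq [0,\lambda_1]^n$ and hence $\pSH(\lambda) \subseteq [0,\lambda_1]^{n-1}$, giving $\vol{\pSH(\lambda)} \leq \lambda_1^{n-1}$. This is elementary, avoids Postnikov entirely, and drops the $n^{2n}$ factor, which the paper would have been happy to do since a smaller bound here only improves the constants in the main theorem. The paper's own argument instead instantiates Postnikov's Theorem 3.1 as the (signed) sum
\[
\vol{\pSH(\lambda)} = \frac{1}{(n-1)!}\sum_{\sigma \in \mathrm{Sym}_n}\frac{\bigl(\sum_{i=1}^n\lambda_i\sigma(i)\bigr)^{n-1}}{\prod_{j=1}^{n-1}(\sigma(j) - \sigma(j+1))},
\]
bounds each term in absolute value by $\bigl(\tfrac{n(n+1)\lambda_1}{2}\bigr)^{n-1}$ using that the integer denominators have absolute value at least $1$, and multiplies by the $n!/(n-1)! = n$ effective terms.

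One caveat on your first route: you describe Postnikov's Theorem 3.1 as a \emph{non-negative} sum indexed by trees or parking functions. That is not the formula the paper invokes; the identity above is an alternating sum over $\mathrm{Sym}_n$, with the sign carried by $\prod(\sigma(j)-\sigma(j+1))$. Postnikov does prove a positive formula for permutohedron volumes elsewhere in that paper (in terms of trees/``mixed subdivisions''), so your description may be conflating two different results. If you wanted to make the first route rigorous you would either need to locate the correct positivity result and verify the index set, or do as the paper does and bound absolute values of the signed terms. In any case the bounding-box argument is the cleaner path and I would recommend it.
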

\begin{proof}
  A special case of Theorem 3.1 of \cite{Post09} gives the following equality:
  \begin{equation}
    \lvol{\pSH(\lambda)}
    = \frac{1}{(n-1)!}
    \sum_{\sigma \in \mathrm{Sym}_n}\frac{
      \inp{\sum_{i=1}^n\lambda_i\sigma(i)}^{n-1}
    }{
      \prod_{j=1}^{n-1}(\sigma(j) - \sigma(j+1))
    }.
  \end{equation}
  The claim now follows by upper bounding the absolute value of each term by
  $\inp{\frac{n(n+1)\lambda_1}{2}}^{n-1}$ which is at most
  $n^{2n-1}\lambda_1^{n-1}$ for $n \geq 1$.
\end{proof}

\paragraph{\textbf{Spectra and the Schur-Horn and Gelfand-Tsetlin polytopes.}} The
\emph{Schur-Horn} theorem says that given an $n \times n$ Hermitian matrix $H$ with
spectrum $\lambda$ (arranged in non-increasing order), the set of diagonals of all
$n \times n$ matrices unitarily equivalent to $H$ is exactly $\SH(\lambda)$.  Following the
notation of \cite{LMV21}, we denote by $R(H)$ the \emph{Rayleigh
  map}~\cite{Ner03} which maps an $n \times n$ Hermitian matrix $H$ with spectrum
$\lambda$ as above to a triangular array whose $i$th row, for
\(1 \leq i \leq n\), is the spectrum of the $i \times i$ leading principle submatrix of
$H$.  The Cauchy interlacing theorem then implies that
$R(UHU^{\dagger}) \in \GT(\lambda)$ for any unitary $U$.  It then follows that if we define
the map
\begin{equation}
  \label{eq-alpha-p}
  \alpha': \GT(\lambda) \rightarrow \R^{n}
  \text { given by }
  \alpha'(z)_i = \abs{z_{i, \cdot}} - \abs{z_{i-1,\cdot}}
  \text{ for }
  1 \leq i \leq n,
\end{equation}
(where we assume the convention $\abs{z_{0, \cdot}} = 0$) then
$\alpha'(R(UHU^{\dagger}))$ is the diagonal of $UHU^{\dagger}$.  Thus,
$\SH(\lambda)$ is a subset of the range of $\alpha'$, and it can be shown that the range is
in fact $\SH(\lambda)$ (see, e.g., \cite[Section 2]{LMV21}).  One also sees from this
correspondence that the polytope $\GT(\lambda, \mu)$ is non-empty if and only if
$\lambda \succeq \mu$.

\subsection{Log-concavity of volumes of Kostka polytopes}
The following is a special case of a well known general idea (see,
e.g.,~\cite{okounkov_why_2003}) that the Brunn-Minkowski inequality implies that
volumes of polytopes parameterized linearly are log concave in the
parameterization.
\begin{proposition}[\textbf{Log concavity of $V_{\lambda,\mu}$}]
  \label{prop:log-concavity-of-vol} Fix a partition $\lambda$ with $n$ parts.
  Then for any $a \in (0, 1)$ and non-negative $n$-tuples $\mu_1$ and $\mu_2$,
  \begin{equation}
    V_{\lambda, a\mu_1 + (1-a)\mu_2} \geq V_{\lambda,\mu_1}^a\cdot V_{\lambda, \mu_2}^{1-a}.
  \end{equation}
\end{proposition}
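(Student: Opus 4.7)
The plan is a direct application of the Brunn--Minkowski inequality to the linearly parameterized family $\mu \mapsto \pGT(\lambda, \mu)$, using the key observation (already exploited in the proof of \cref{prop-hyperplane-move}) that the coefficient vectors on the left-hand sides of the defining inequalities \cref{eq:3,eq:5,eq:6,eq:7,eq:10} of $\pGT(\lambda, \mu)$ do not depend on $\mu$: only the right-hand sides do, and they do so linearly in $\mu$. This is the essential structural fact; everything else is standard.

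First, I would check the inclusion
\begin{equation*}
  a \cdot \pGT(\lambda, \mu_1) + (1-a) \cdot \pGT(\lambda, \mu_2)
  \subseteq \pGT(\lambda, a\mu_1 + (1-a)\mu_2),
\end{equation*}
which follows by taking, for any $x^{(1)} \in \pGT(\lambda, \mu_1)$ and $x^{(2)} \in \pGT(\lambda, \mu_2)$, the corresponding convex combination of the inequalities \cref{eq:3,eq:5,eq:6,eq:7,eq:10} witnessed by $x^{(1)}$ and $x^{(2)}$; linearity of the right-hand sides in $\mu$ then places $a x^{(1)} + (1-a) x^{(2)}$ in the fiber over $a\mu_1 + (1-a)\mu_2$. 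It suffices to handle the case where both $\mu_i \in \SH(\lambda)$, since otherwise one of the volumes is zero and the conclusion holds trivially (with the convention $0^t = 0$ for $t > 0$).

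Next, I would apply Brunn--Minkowski in its dimensional form
\begin{equation*}
  \vol{aA + (1-a)B}^{1/d} \;\geq\; a\,\vol{A}^{1/d} + (1-a)\,\vol{B}^{1/d}
\end{equation*}
with $A = \pGT(\lambda, \mu_1)$, $B = \pGT(\lambda, \mu_2)$ and $d = (n-1)(n-2)/2$, obtaining
\begin{equation*}
  \tV_{\lambda,\, a\mu_1 + (1-a)\mu_2}^{1/d}
  \;\geq\; a\,\tV_{\lambda, \mu_1}^{1/d} + (1-a)\,\tV_{\lambda, \mu_2}^{1/d}.
\end{equation*}
The AM--GM inequality upgrades the right-hand side to $\tV_{\lambda, \mu_1}^{a/d} \cdot \tV_{\lambda, \mu_2}^{(1-a)/d}$, and raising both sides to the $d$-th power yields log-concavity of $\tV_{\lambda, \cdot}$. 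Finally, multiplying through by the dimension-dependent factor $\sqrt{(n-1)!}$ (which cancels on both sides of a log-concavity inequality of this homogeneous form) and invoking \cref{eq:vol-relation}, i.e.\ $V_{\lambda, \mu} = \sqrt{(n-1)!}\cdot \tV_{\lambda, \mu}$, gives the desired bound.

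I do not anticipate any genuine obstacle: the entire content is the structural remark that $\pGT(\lambda, \mu)$ has $\mu$-independent normals, which reduces the statement to a textbook consequence of Brunn--Minkowski (as indicated in the proposition's own lead-in reference to \cite{okounkov_why_2003}). If one preferred a more measure-theoretic packaging, one could equivalently observe that $V_{\lambda, \mu}$ (up to a computable constant) is the density at $\mu$ of the pushforward of Lebesgue measure on $\GT(\lambda)$ under the linear map $\alpha'$ of \cref{eq-alpha-p}, and then appeal to Pr\'ekopa--Leindler applied to the log-concave indicator of $\GT(\lambda)$; this avoids any explicit reference to the slice dimension but is otherwise the same argument.
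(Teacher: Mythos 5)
Your proposal is correct and is essentially the same argument as the paper's: you verify the Minkowski-sum inclusion $a\,\pGT(\lambda,\mu_1)+(1-a)\,\pGT(\lambda,\mu_2)\subseteq\pGT(\lambda,a\mu_1+(1-a)\mu_2)$ and then invoke Brunn--Minkowski (in dimensional form plus AM--GM), whereas the paper records the same inclusion as a pointwise inequality between indicator functions and invokes Prékopa--Leindler, which for indicators is precisely the multiplicative Brunn--Minkowski inequality you derive. The two routes are interchangeable, and your handling of the degenerate case $\mu_i\notin\SH(\lambda)$ and the final rescaling by $\sqrt{(n-1)!}$ via \cref{eq:vol-relation} match the paper.
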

\begin{proof}
  Consider points $x$ and $y$ in $\pGT(\lambda, \mu_1)$ and
  $\pGT(\lambda, \mu_2)$ respectively.  Then, an inspection of the constraints
  in \cref{eq:gt-cons,eq:gt-section-first,eq:gt-section} shows
that $ax + (1-a)y$ is a point in $\pGT(\lambda, a\mu_1 + (1-a)\mu_{2})$.
  Thus, for the indicator functions
  $\ind{\pGT(\lambda, a\mu_1 + (1-a)\mu_{2})}$, $\ind{\pGT(\lambda, \mu_1)}$ and
  $\ind{\pGT(\lambda, \mu_2)}$, we have that for every $x, y$
  \begin{equation}
    \ind{\pGT(\lambda, a\mu_1 + (1-a)\mu_{2})}(ax + (1-a)y)
    \geq
    \ind{\pGT(\lambda, \mu_1)}(x)^a
    \cdot \ind{\pGT(\lambda, \mu_2)}(y)^{1-a}.
  \end{equation}
  Applying the \pl{} inequality for the Lebesgue measure in $\R^{(n-1)(n-2)/2}$
  now yields
  \begin{equation}
    \tV_{\lambda, a\mu_1 + (1-a)\mu_2} \geq \tV_{\lambda,\mu_1}^a\cdot \tV_{\lambda, \mu_2}^{1-a}.
  \end{equation}
  The claim now follows since, from \cref{eq:vol-relation}, the ratio between
  $\tV_{\lambda, \mu}$ and $V_{\lambda, \mu}$ does not depend upon $\mu$.
\end{proof}

As an immediate corollary, we get the following.
\begin{corollary}\label{cor:log-concave-f}
  Let $\lambda$ be a partition with $n$ distinct parts.  Fix
  $w \in \R_{\geq 0}^n$.  Then, the function
  \begin{equation}
    \label{eq:f-def}
    f_{\lambda, w}(\nu) \defeq V_{\lambda,\nu}\exp(w\cdot \nu)
  \end{equation}
  is log-concave.  Note that $f_{\lambda, w}(\nu) = 0$ if $\nu$ is not in
  $\SH(\lambda)$.
\end{corollary}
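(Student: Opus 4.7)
The plan is to reduce the claim directly to the log-concavity statement already established in \cref{prop:log-concavity-of-vol}, since the added exponential factor is log-linear and therefore harmless.

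First, I would write $\log f_{\lambda, w}(\nu) = \log V_{\lambda, \nu} + w \cdot \nu$ for all $\nu$ where $V_{\lambda,\nu} > 0$, adopting the convention $\log 0 = -\infty$ elsewhere. The second summand is linear (hence concave) in $\nu$. For the first summand, \cref{prop:log-concavity-of-vol} gives that for any $a \in (0,1)$ and any non-negative $n$-tuples $\mu_1, \mu_2$,
\begin{equation*}
  \log V_{\lambda, a\mu_1 + (1-a)\mu_2} \geq a \log V_{\lambda, \mu_1} + (1-a)\log V_{\lambda, \mu_2},
\end{equation*}
which is exactly the concavity of $\log V_{\lambda, \cdot}$ on the effective support $\SH(\lambda)$.

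Next, I would add the linear term $w\cdot\nu$ to both sides and combine: this gives concavity of $\log f_{\lambda,w}$ on $\SH(\lambda)$, i.e., log-concavity of $f_{\lambda, w}$ there. The remaining point to check is the behavior outside $\SH(\lambda)$, where $V_{\lambda,\nu} = 0$ and hence $f_{\lambda, w}(\nu) = 0$. Since $\SH(\lambda)$ is convex, any convex combination $a\nu_1 + (1-a)\nu_2$ of points in $\SH(\lambda)$ stays in $\SH(\lambda)$, so no new issue arises; and if one of $\nu_1, \nu_2$ is outside $\SH(\lambda)$, the right-hand side of the log-concavity inequality is $-\infty$ and the inequality holds trivially.

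There is no substantive obstacle here: everything nontrivial is absorbed into \cref{prop:log-concavity-of-vol}. The only mild care needed is bookkeeping around $\nu \notin \SH(\lambda)$, which is handled by the $\log 0 = -\infty$ convention standard in the definition of log-concavity.
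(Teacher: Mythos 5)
Your proof is correct and matches the paper's (implicit) reasoning exactly: the paper gives no proof at all, calling the corollary an ``immediate'' consequence of \cref{prop:log-concavity-of-vol}, and the step you spell out --- that adding the log-linear term $w\cdot\nu$ preserves concavity of the logarithm, with the $\log 0 = -\infty$ convention handling points outside $\SH(\lambda)$ --- is precisely what makes it immediate.
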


\subsection{Log-concave densities}
We will need the following result of Fradelizi~\cite{fradelizi_sections_1997}.
\begin{theorem}[\textbf{\cite[Theorem 4]{fradelizi_sections_1997}}] Let
  \(K \subseteq \R^{N}\) be a convex body and let $f: \R^{N} \rightarrow \R$ be a non-negative
  valued log-concave function supported on \(K\) such that
  $\int\limits_{x \in \R^N}f(x) dx$ is positive.  Define $\mean{f}$ to be the mean of
  the probability distribution with density proportional to $f$. Thus,
  \begin{equation}
    \label{eq-mean-f}
    \mean{f} \defeq \frac{\int\limits_{x \in \R^N}xf(x) dx}{\int\limits_{x \in \R^N}f(x)
      dx}.
  \end{equation}
  Then,
  \begin{equation}
    f(v) \leq \exp(N) \cdot f(\mean{f}) \text{ for all } v \in \R^N.
  \end{equation}\label{them:fradelizi}
\end{theorem}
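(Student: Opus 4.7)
The approach is to reduce the bound to a one-dimensional statement and then establish the latter by direct manipulation of log-concave densities. First, by scaling $f$ by a positive constant, I may assume $\int f = 1$, since both sides of the target inequality scale equally. Because $f$ is log-concave and supported on a convex body, it attains its supremum at some mode $v^* \in K$; and since $f(v) \leq f(v^*)$ pointwise, it suffices to prove $f(v^*) \leq \exp(N)\cdot f(\mean{f})$. After translation I may take $v^* = 0$.

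The heart of the argument is a reduction to dimension one. The tight constant $\exp(N)$ signals that the extremizer is the product density $f(x) = \prod_{i=1}^{N} e^{-x_i}\mathbf{1}_{x_i \geq 0}$, which has mode at $0$ with value $1$, mean $(1,\dots,1)$ with value $e^{-N}$, so the ratio is exactly $e^{N}$. This product structure motivates an inductive argument on $N$: integrate out one coordinate using \pl{} to obtain a log-concave marginal on $\mathbb{R}$, apply the 1D case there (losing one factor of $e$), and apply the induction hypothesis to the log-concave slice of $f$ at the level of the marginal's mode (losing $e^{N-1}$). Alternatively, one may invoke a needle decomposition in the spirit of Lov\'asz--Simonovits or Klartag to reduce the $N$-dimensional inequality directly to a family of weighted one-dimensional inequalities on intervals, each contributing a single factor of $e$.

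The one-dimensional statement to be proved is: if $g$ is a log-concave probability density on $\mathbb{R}$ with mode $M$ and mean $\bar{m}$, then $g(M) \leq e\cdot g(\bar{m})$. Translate so $M = 0$; WLOG $\bar{m} > 0$. Set $a \defeq \bar{m}^{-1}\log\inp{g(0)/g(\bar{m})}$. Concavity of $\log g$ places the chord between $(0, \log g(0))$ and $(\bar{m}, \log g(\bar{m}))$ below the graph on $[0,\bar{m}]$, giving $g(x) \geq g(0) e^{-ax}$ there; while on $[\bar{m}, \infty)$ the monotonicity of the slopes of $\log g$ yields the matching upper bound $g(x) \leq g(0) e^{-ax}$. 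Integrating these against $1$ and $x$, and using the constraints $\int g = 1$ and $\int x\, g(x)\, dx = \bar{m}$, a comparison with the extremal one-sided exponential density $a e^{-ax}\mathbf{1}_{x\geq 0}$ yields $a\bar{m} \leq 1$, equivalently $g(0) \leq e\cdot g(\bar{m})$. The main obstacle is the dimension-reduction step: one must choose the direction of marginalization carefully so that the mode of the marginal stays at $0$, the 1D case applies to the correct mean, and the factor of $e$ accumulates exactly once per dimension; the 1D case itself, once the exponential extremizer is identified, is a routine calculation.
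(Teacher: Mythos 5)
The paper states \cref{them:fradelizi} purely as a citation to Fradelizi~\cite{fradelizi_sections_1997} and gives no proof of its own, so there is no internal argument to compare against; the question is only whether your proposal is a valid proof.

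Your one-dimensional lemma and its proof are correct. With $h(x) \defeq g(0)e^{-ax}\mathbf{1}_{x\geq 0}$ and $a = \bar m^{-1}\log\big(g(0)/g(\bar m)\big)$, your chord/secant-slope estimates give $(x-\bar m)\big(g(x)-h(x)\big)\leq 0$ on $[0,\infty)$; integrating, and noting that any mass of $g$ on $(-\infty,0)$ only increases $\int_0^\infty (x-\bar m)g(x)\,dx$, one obtains $0 \leq \int_0^\infty(x-\bar m)\big(g(x)-h(x)\big)\,dx \leq 0$ in the form $0 \leq \frac{g(0)}{a^2}(1-a\bar m)$, hence $a\bar m\leq 1$ and $g(0)\leq e\cdot g(\bar m)$.

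The genuine gap is the reduction from dimension $N$ to dimension $1$, which you flag yourself as ``the main obstacle'' but do not resolve, and the inductive scheme you sketch does not in fact go through. If $g(t)=\int_{\theta^\perp} f(t\theta+y)\,dy$ is the marginal, its mode sits at some level $t^*$ that bears no relation to the level containing the global mode of $f$, so ``$\sup$ of the slice at $t^*$'' is not $\sup f$; and the barycenter of the slice $f_{t^*}$ is not the projection of $\mean{f}$, so the induction hypothesis gives a bound at the wrong point. There is no choice of $\theta$ that repairs both mismatches at once, and the factors of $e$ do not compose as you claim because the 1D lemma applied to the marginal controls hyperplane integrals of $f$, not pointwise values. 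The localization alternative is also not a one-liner here: the barycenter condition is an $N$-dimensional linear constraint and the target is a pointwise lower bound on $f$ at $\mean{f}$, neither of which fits the single-integral-constraint, integral-conclusion form of the Lov\'asz--Simonovits bisection argument without substantial additional work. The shape of the extremizer does suggest a one-shot weighted reduction (a $t^{N-1}$-weighted version of your 1D lemma applied along the ray from the mode through the barycenter), but establishing the required weighted-barycenter constraint on that ray from the $N$-dimensional barycenter identity is itself a non-trivial step that your proposal does not supply. As written, the $N$-dimensional claim is not proved.
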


\begin{table}[t]
  \centering
  \begin{tabular}{|c|l|}
    \hline
    Notation
    & Location\\
    \hline
    $\gap$
    & \Cref{def-gap}\\
    \hline
    $\tau(\la, \mu)$
    & \Cref{eq:tau-def}\\
    \hline
    $r(\lambda, \mu)$
    & \Cref{eq-r-def}\\
    \hline
    $\delta'(\lambda, \mu)$
    & \Cref{eq-delta-prime-def}\\
    \hline
    $r_0(\lambda)$
    & \Cref{eq-r0}\\
    \hline
    $\bdrad(\lambda, \mu)$
    & \Cref{eq-bdrad-def}\\
    \hline
  \end{tabular}
  \caption{Notation index}
  \label{tab:notation}
\end{table}

\section{Lower bounding the volume: Continuous analogues of Schur polynomials}
\label{sec-cont-anal-schur}
In this section, we derive a lower bound on the volume of the Kostka polytope
\(\GT(\lambda, \mu)\).  Our main technical tool for this is a continuous analogue of the
(discrete) Schur polynomials discussed in the introduction.  This continuous
analogue of the Schur polynomials can be defined in a few equivalent ways; for
our purposes, it will be convenient to use the following definition, which has
been used, e.g. by Khare and Tao~\cite[eqs.~(5.10, 5.11)]{khare_sign_2021}.  Our
notation however is closer to the exposition by Tao in \cite{Tao-blog}, and we
denote the continuous analogue of the Schur polynomials $s_\la(x)$ by
$S_\la(x)$, where $\lambda \in \R^n$ is a non-increasing tuple, and $x \in \R^n$ is an
arbitrary vector. The function $S_{\lambda}(x)$ is then defined by
\begin{equation}
  \label{eq:tao-S-def}
  S_{\lambda}(x)
  \defeq
  \int\limits_{z \in \GT(\lambda)}
  \exp\inp{
    \sum_{i = 1}^{n}
    \inp{\abs{z_{i, \cdot}} - \abs{z_{i-1,\cdot}}}x_i
  }
  \quad
  \prod_{\substack{1 \leq i \leq {n-1}\\1 \leq j \leq i }} dz_{i,j},
\end{equation}
where $\abs{z_{0, \cdot}}$ is assumed by convention to be $0$.

For our purposes, three important properties of the continuous analogue
\(S_{\lambda}\) turn out to be of crucial both in the proof of the volume lower bound
in \cref{prop:correct-mean}, and also for later developments in the paper.  We
now discuss these in order.

{\interfootnotelinepenalty=10000
  \paragraph{\textbf{\(S_{\lambda}\) as a Gibbs partition function.}}  Note that
  \(S_{\lambda}(x)\), as defined in \cref{eq:tao-S-def}, can be seen as the partition
  function of a Gibbsian probability distribution on \(\GT(\lambda)\), with
  \(x\) playing the role of a vector-valued ``inverse temperature'' and
  \(\alpha'(z)\) (where the map \(\alpha'\) from \(\GT(\lambda)\) to
  \(\SH(\lambda)\) is as defined in \cref{eq-alpha-p}) playing the role of a
  vector-valued ``energy functional''.  The first important consequence of this
  general form of \(S_{\lambda}\) that matters for our purposes is that the gradient,
  with respect to \(x\), of \(\log S_{\lambda}(x)\) is \emph{exactly} the mean of
  \(\alpha'(z)\) when \(z\) is distributed in \(\GT(\lambda)\) according to a probability
  density proportional to the integrand in the definition of
  \(S_{\lambda}(x)\).\footnote{This is also related to the idea, going back to
    Boltzmann~\cite{Boltzmann1877}, that the integrand in the definition of
    \(S_{\lambda}\) is proportional to a probability density on \(\GT(\lambda)\) that has
    the maximum possible entropy under the constraint that the function \(\alpha'\) has a
    given mean.  However, this connection by itself will not be important for our
    purposes.}  }

\paragraph{\textbf{\(S_{\lambda}\) and the permutohedron \(\SH(\lambda)\).}} The second important
property of \(S_{\lambda}\) concerns projecting the integrand in its definition to a
measure on the permutohedron \(\SH(\lambda)\) of \(\lambda\). This leads to a probability
density on \(SH(\lambda)\) that
\begin{enumerate}
\item at any point \(\mu \in \SH(\lambda)\), is proportional to the volume of
  \(\GT(\lambda, \mu)\) multiplied by an exponential tilt (as we show in
  \cref{lem:geom-quant} below), and
\item is log-concave on \(SH(\lambda)\): this is a consequence of
  \cref{lem:geom-quant} below combined with \cref{cor:log-concave-f} proved
  above.
\end{enumerate}
Along with Fradelizi's bound~(\cref{them:fradelizi}), the above facts all play
a crucial role in the proof of the volume lower bound in
\cref{prop:correct-mean}.

\paragraph{\textbf{\(S_{\lambda}\) is a ratio of two determinants.}}
As observed in \cite[eqs.~(5.10, 5.11)]{khare_sign_2021}, an integration formula
due to Shatashvili~\cite[eq. (3.2)]{shatashvili_correlation_1993} implies the
following identity for $S_{\lambda}(x)$, which is valid when the entries of $x$ are
pairwise distinct:
\begin{equation}
  \label{eq:s-det-unequal}
  S_\la(x) = \frac{\det[\exp(x_i\la_j)]_{1\leq i,j \leq n}}{V(x)}.
\end{equation}
Here, $V(x) \defeq \prod_{1 \leq i < j \leq n}(x_i - x_j)$ is a Vandermonde determinant.
It is directly apparent from this representation (and the continuity of
$S_{\lambda}$) that $S_{\lambda}(x)$ is symmetric in \(x\).  This representation in terms of
determinants does not play much role in this section, but is crucial when we
consider the computational aspects of our estimates, in \cref{sec:comp-aspects}.

Before proceeding with the technical content of this section, we also record
a basic algebraic property of \(S_{\lambda}(x)\):  it follows directly from
the definition of $S_{\lambda}$ and the scaling and translation properties of
$\GT(\lambda)$ that for any positive real numbers $\alpha$ and $\beta$,
\begin{equation}
  \label{eq-scale-shift-s}
  \begin{aligned}
    S_{\lambda + \alpha \one}(x)
    &= \exp(\alpha\one^{\top}x)S_{\lambda}(x) \text{, and}\\
    S_{\lambda}(\beta x)
    &= \beta^{-\binom{n}{2}}S_{\beta\lambda}(x).
  \end{aligned}
\end{equation}

We now begin with the following simple observation.

\begin{proposition}
  Let $\la$ be a partition with distinct parts and let $\mu$ be a $n$-tuple of
  non-negative reals such that \(\mu\) is in the relative interior of
  \(\SH(\lambda)\).  Then the function
  \begin{equation}
    \label{eq:38}
    g_{\lambda,\mu}(x) \defeq \log S_{\lambda}(x) - x^T\mu
  \end{equation}
  from $\R^n$ to
  $\R$ is convex. \label{prop:log-convex-cont-schur}
\end{proposition}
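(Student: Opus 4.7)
The plan is to use the Gibbsian partition function interpretation of $S_{\lambda}$ from \cref{eq:tao-S-def} to recognize $\log S_{\lambda}(x)$ as the cumulant generating function of the random variable $\alpha'(z)$ (with $z$ distributed uniformly on $\GT(\lambda)$), from which convexity follows by Hölder's inequality. Since subtracting the linear term $x^{\top}\mu$ preserves convexity, the convexity of $g_{\lambda,\mu}$ is then immediate.

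In more detail, first I would rewrite \cref{eq:tao-S-def} in the form
\begin{equation*}
  S_{\lambda}(x) = \int_{z \in \GT(\lambda)} \exp\bigl(x^{\top}\alpha'(z)\bigr)\,dz,
\end{equation*}
where $\alpha'$ is the map defined in \cref{eq-alpha-p} and the integration is with respect to the Lebesgue measure on $\tpGT(\lambda)$ via the identification $p$. The integral is finite for every $x \in \R^n$ because $\GT(\lambda)$ is a compact set and the integrand is continuous; in particular $S_{\lambda}(x) > 0$, so the logarithm is well-defined.

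Next, I would apply Hölder's inequality. For any $x, y \in \R^n$ and any $\theta \in (0,1)$, setting $p = 1/\theta$ and $q = 1/(1-\theta)$ gives conjugate exponents, and so
\begin{equation*}
  S_{\lambda}(\theta x + (1-\theta) y)
  = \int_{\GT(\lambda)} \exp\bigl(\theta x^{\top}\alpha'(z)\bigr) \cdot \exp\bigl((1-\theta) y^{\top}\alpha'(z)\bigr)\,dz
  \leq S_{\lambda}(x)^{\theta}\,S_{\lambda}(y)^{1-\theta}.
\end{equation*}
Taking logarithms yields the convexity of $\log S_{\lambda}$, and finally subtracting the linear function $x \mapsto x^{\top}\mu$ (which is both convex and concave) preserves convexity, so $g_{\lambda,\mu}$ is convex on $\R^n$.

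There is no real obstacle here — the argument is entirely standard once one has the Gibbsian/partition-function form of $S_{\lambda}$. The only minor point worth being careful about is justifying finiteness and strict positivity of the integral (so that $\log S_{\lambda}$ is everywhere well-defined), which follows from compactness of $\GT(\lambda)$ together with the fact that the top row of a $\GT$ pattern with top row $\lambda$ has positive $\binom{n}{2}$-dimensional Hausdorff measure whenever $\lambda$ has distinct parts (cf.\ \cref{eq:gt-volume}). The hypothesis $\mu \in \SH(\lambda)$ plays no role in the convexity argument itself; it is presumably recorded here for use in the subsequent sections where the infimum of $g_{\lambda,\mu}$ is analyzed.
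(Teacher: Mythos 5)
Your proof is correct and follows essentially the same route as the paper: both write $S_{\lambda}(x)$ as a Gibbsian integral over $\GT(\lambda)$, apply H\"older's inequality with exponents $1/\theta$ and $1/(1-\theta)$ to get log-convexity of $S_{\lambda}$, and note that subtracting the linear term $x^{\top}\mu$ preserves convexity. The extra remarks you add about finiteness and strict positivity of the integral (and the observation that $\mu\in\SH(\lambda)$ is not actually used) are sound but not part of the paper's own argument.
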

\begin{proof}
Since the second term is linear, it suffices to prove that $S_{\lambda}$ is
  log convex.  This is a consequence of H\"older's inequality: for any
  $\theta \in (0, 1)$ and $x, y \in \R^n$, we apply H\"older's inequality with
  norms $1/\theta$ and $1/(1-\theta)$ to get (using the definition of
  $S_\lambda$ in \cref{eq:tao-S-def})
  \begin{equation}
    S_{\lambda}(\theta  x + (1-\theta)y)
    \leq S_{\lambda}(x)^{\theta} \cdot S_{\lambda}(y)^{1-\theta}. \qedhere
  \end{equation}
\end{proof}

That one can view the continuous analogue of Schur polynomial as a
“semiclassical limit” of its discrete counterpart (\cref{eq-schur-poly}), in a
manner that can be made precise using the machinery of geometric quantisation,
is mentioned in \cite{Tao-blog}; we prove a related identity below.  The
identity below shows that \(S_{\lambda}(x)\) corresponds roughly to replacing,
in the definition of the Schur polynomial, Kostka numbers by volumes of Kostka
polytopes, and the sum by an integral.  It was pointed out to us by Colin
McSwiggen that this identity can be situated in the more general context of
Harish-Chandra integrals, and in particular, after appropriate translation of
notation, can be seen as a special case of equation (73) on p.~461 of
\cite{McSwiggen2021BoxSplines}.
\begin{lemma} Let $\lambda \in \R^n$ be a non-increasing tuple. Then
  \begin{equation}
    \label{eq:geom-quant}
    \begin{aligned}[c]
      S_\la(x)
      &=
        \frac{1}{\sqrt{(n-1)!}} \cdot
        \int\limits_{\nu \in \SH(\lambda)}
        V_{\la \nu} \exp(x \cdot \nu)
        \quad
        \prod\limits_{1 \leq i \leq n - 1} d\nu_{i}\\
      &\overset{\textup{\cref{eq:vol-relation}}}{=} \int
        \limits_{\nu\in\SH(\la)}
        \tV_{\la, \nu}
        \exp(x \cdot \nu)
        \prod\limits_{1 \leq i\leq n-1} d\nu_i.
    \end{aligned}
  \end{equation}\label{lem:geom-quant}
\end{lemma}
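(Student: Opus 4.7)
The plan is to apply the coarea formula to a suitable row-sum map, fibering $\tpGT(\lambda)$ over $\pSH(\lambda)$ and integrating fiber-by-fiber. Specifically, I would use the linear map $\tilde{L}\colon \R^{n(n-1)/2} \to \R^{n-1}$ defined by $\tilde{L}(z)_i = |z_{i,\cdot}|$ for $1 \leq i \leq n-1$. The matrix of $\tilde{L}$ has rows with pairwise disjoint supports---its $i$th row is simply the indicator vector of the $i$th row of the GT pattern---so these rows are mutually orthogonal with squared norms $1,2,\ldots,n-1$. Consequently $\tilde{L}\tilde{L}^{\top} = \diag(1,2,\ldots,n-1)$, and the coarea Jacobian equals $J_{\tilde{L}} = \sqrt{\det(\tilde{L}\tilde{L}^{\top})} = \sqrt{(n-1)!}$; this is exactly the prefactor appearing in the statement.

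Next I would identify the fibers. For $s$ in the image of $\tilde{L}$ restricted to $\tpGT(\lambda)$, let $\nu \in \R^{n}$ be the unique vector with $\nu_1 = s_1$, $\nu_i = s_i - s_{i-1}$ for $2 \leq i \leq n-1$, and $\nu_n = |\lambda| - s_{n-1}$. The defining constraints of $\GT(\lambda,\nu)$ in \cref{eq:gt-section-first,eq:gt-section} then translate exactly to $|z_{i,\cdot}| = s_i$ for $1 \leq i \leq n-1$, so the fiber $\tilde{L}^{-1}(s) \cap \tpGT(\lambda)$ coincides with $\tpGT(\lambda,\nu)$. Its $(n-1)(n-2)/2$-dimensional Hausdorff measure equals $V_{\lambda,\nu}$, because the projection $\GT(\lambda,\nu) \to \tpGT(\lambda,\nu)$ dropping the fixed top row is an isometry. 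Moreover, the exponential weight in \cref{eq:tao-S-def} depends only on the differences $|z_{i,\cdot}| - |z_{i-1,\cdot}|$ and so is constant on each fiber, with value $\exp(\nu \cdot x)$.

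Assembling these ingredients via the coarea formula yields
\begin{equation*}
  \sqrt{(n-1)!}\, S_{\lambda}(x) \;=\; \int \exp(\nu(s)\cdot x)\, V_{\lambda,\nu(s)}\, ds,
\end{equation*}
where $s$ ranges over $\tilde{L}(\tpGT(\lambda))$. I would then perform the affine substitution $s \mapsto (\nu_1,\ldots,\nu_{n-1})$, which is lower-unitriangular and therefore has unit Jacobian, and which sends the $s$-domain onto $\pSH(\lambda)$---the canonical parameterization of $\SH(\lambda)$ via $p_{\lambda}^{-1}$. This produces the first equality in the claim; the second equality follows immediately from \cref{eq:vol-relation}. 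I do not foresee any substantive obstacle: the only points that require care are correctly computing the coarea Jacobian $\sqrt{(n-1)!}$ and tracking the change of variables from $s$ to the first $n-1$ coordinates of $\nu$.
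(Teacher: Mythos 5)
Your proof is correct, and it follows the same overall strategy as the paper: apply the coarea formula to the defining integral for $S_{\lambda}(x)$, identify the fibers with the sets $\tpGT(\lambda,\nu)$, and read off the formula. The one genuine difference is your choice of fibering map. The paper works directly with the differences map $\alpha(z)_i = |z_{i,\cdot}| - |z_{i-1,\cdot}|$ (landing directly in $p(\nu)$-coordinates), whose matrix $M$ produces a tridiagonal $MM^{\top}$; showing $\det(MM^{\top}) = (n-1)!$ then requires a short inductive row-reduction argument. You instead use the running-sums map $\tilde{L}(z)_i = |z_{i,\cdot}|$, whose matrix has rows with pairwise disjoint supports so that $\tilde{L}\tilde{L}^{\top} = \diag(1,\dots,n-1)$ is diagonal and the Jacobian $\sqrt{(n-1)!}$ falls out immediately; the price you pay is a unit-Jacobian (lower-unitriangular) change of variables from $s$ to $(\nu_1,\dots,\nu_{n-1})$ at the end. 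The two choices are related by $\alpha = A\tilde{L}$ with $A$ lower-unitriangular, which is why both Jacobians agree; your route front-loads the trivial determinant and back-loads the unitriangular substitution, making the linear algebra marginally cleaner while leaving the rest of the argument — fiber identification, constancy of the exponential weight on fibers, translation via \cref{eq:vol-relation} — identical in substance to the paper's.
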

\begin{proof} We start with the definition of
  $S_{\lambda}(x)$ given in \cref{eq:tao-S-def}, and apply the co-area
  formula~\cite[Theorem 3.2.12]{F96} with the map
  \begin{equation}
    \label{eq:34}
    \alpha: \R^{n(n-1)/2} \rightarrow \R^{n-1}
    \text{ given by }
    \alpha(z)_i = \abs{z_{i, \cdot}} - \abs{z_{i-1,\cdot}}
    \text{ for }
    1 \leq i \leq n-1.
  \end{equation}
  Note that $\alpha$ is obtained from the onto map
  $\alpha': \GT(\lambda) \rightarrow \SH(\lambda)$ defined in \cref{eq-alpha-p}
  by simply ignoring the last coordinate, and it thus maps $z \in \tpGT(\lambda)$
  onto $\pSH(\lambda)$.  The co-area formula then gives (after a calculation of
  the $(n-1)$-dimensional Jacobian of $\alpha$: see below)
  \begin{multline}
    \label{eq:24}
    \int\limits_{z \in \GT(\lambda)}
    \exp\inp{
      \sum_{i = 1}^{n}
      \inp{\abs{z_{i, \cdot}} - \abs{z_{i-1,\cdot}}}x_i
    }
    \quad
    \prod_{\substack{1 \leq i \leq {n-1}\\1 \leq j \leq i }} dz_{i,j} \\
    =
    \frac{1}{\sqrt{(n-1)!}}
    \cdot \int\limits_{\nu \in \SH(\lambda)}\exp(x\cdot \nu) V_{\lambda,\nu}
    \quad
    \prod\limits_{1 \leq i \leq n - 1} d\nu_{i},
  \end{multline}
  which implies the claim of the lemma.  It only remains to complete the
  computation of the Jacobian required in order to apply the co-area formula.

  Let \(D\alpha(z)\) denote the derivative of the map \(\alpha\).  We then note that the
  co-area formula~\cite[Theorem 3.2.12]{F96}, implies \cref{eq:24} provided that
  the Jacobian
  \begin{equation}
    J_{\alpha}(\vec{z}) \defeq \sqrt{\det\inp{D\alpha(z)D\alpha(z)^{\top}}}
  \end{equation}
  of \(\alpha\) satisfies \(J_{\alpha}(\vec{z}) \equiv \sqrt{(n-1)!}\).  We now proceed to
  establish this.  To begin, we note that the derivative $D\alpha(z)$ can be written
  as an $(n-1) \times n(n-1)/2$ matrix
  \begin{equation}
    \begin{aligned}[c]
      M
      &= \inp{M_{i, (j,k)}}_{
        \substack{
        1 \leq i \leq n - 1\\
      1 \leq j \leq n - 1\\
      1 \leq k \leq j
      }}\text{, whose entries are given by}\\
      M_{i, (j, k)}
      &=
        \begin{cases}
          1 & \text{ if } j = i,\\
          -1 & \text{ if } j = i - 1, \text{ and }\\
          0 & \text{otherwise}.
        \end{cases}
    \end{aligned}
  \end{equation}
  Thus, we get
  \begin{equation}
    \label{eq:37}
    J_{\alpha}(\vec{z}) = \sqrt{\det\inp{D\alpha(z)D\alpha(z)^{\top}}} = \sqrt{\det\inp{MM^{\top}}}
  \end{equation}
  where $MM^{\top}$ is an $(n-1)\times (n-1)$ matrix whose entries are given by
  \begin{equation}
    (MM^{\top})_{i,j} =
    \begin{cases}
      2i-1 & \text{ if } i = j,\\
      -\min\inb{i,j} & \text{ if } \abs{i-j} = 1\text{, and}\\
      0 & \text{otherwise}.
    \end{cases}
  \end{equation}
  Consider now the following sequence of $(n-2)$ determinant preserving row
  operations on $MM^{\top}$: in the $i$th operation, row $i$ is added to row
  $i + 1$.  An induction shows that after the first $k$ operations,
  \begin{enumerate}
  \item all but the first $(k+1)$ rows of $MM^{\top}$ remain unchanged, and all
    entries with indices $(i,j)$ where $j > i$ remain unchanged; and
  \item the $(k+1)$th leading principal submatrix of $MM^{\top}$ is transformed
    into an upper triangular matrix with the entries $(1, 2, \dots, k+1)$ on the
    diagonal.
  \end{enumerate}
  From this, it follows that $\det\inp{MM^{\top}} = (n-1)!$.  Substituting this in
  \cref{eq:37} above, we get $J_{\alpha}(\vec{z}) = \sqrt{(n-1)!}$ for every
  $\vec{z} \in \tpGT(\lambda)$.  As noted above, this completes the proof of
  \cref{eq:24}.
\end{proof}

The following corollary is not required for further developments in this paper,
but we include it as an easy consequence of the identity in
\cref{lem:geom-quant}.  A stronger version of this corollary was proved by
Sra~\cite{sraInequalitiesNormalizedSchur2016}, and has been further generalized
by McSwiggen and Novak~\cite{mcswiggenMajorizationSphericalFunctions2022}.
\begin{corollary}\label{cor-s-monotone}
  Let $\mu$ be a non-increasing $n$-tuple of real numbers, and let
  $\delta_1 \geq \delta_2 \geq \dots \geq \delta_{n-1} \geq \delta_n$ be a
  non-increasing $n$-tuple of real numbers summing up to $0$ such that
  $\delta \succeq \vec{0}$ (i.e., the sum of every prefix is non-negative).  Let
  $\lambda \defeq \mu + \delta$ so that, in particular, $\lambda \succeq \mu$.
  Then, for any $x \in \R^n$, $S_{\lambda}(x) \geq S_{\mu}(x)$.
\end{corollary}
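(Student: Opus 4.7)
The plan is to exhibit a single translation $\Phi\colon \tpGT(\mu) \to \tpGT(\la)$ that has Jacobian identically $1$ and preserves the map $\alpha'$ from \cref{eq-alpha-p}. Given such a $\Phi$, the corollary will follow in one line from the integral representation \cref{eq:tao-S-def} of $S_{(\cdot)}(x)$ together with the non-negativity of its integrand.

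The only substantive step is to produce the translation vector. The hypotheses on $\delta$ (non-increasing, summing to $0$, with non-negative prefix sums) are exactly the assertion $\delta \succeq \vec{0}$ in the majorization order. Therefore, by the non-emptiness criterion for Kostka polytopes recalled at the end of the ``Spectra and the Schur-Horn and Gelfand-Tsetlin polytopes'' paragraph, $\GT(\delta, \vec{0})$ is non-empty, and I would fix any element $c$ of it. Concretely, such a $c$ is a triangular array with top row $\delta$, satisfying the interlacing inequalities \cref{eq:gt-cons}, and with every row sum equal to zero.

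Next, set $\Phi(z) := z + c$ on $\tpGT(\mu)$. Since the interlacing inequalities \cref{eq:gt-cons} are preserved under coordinate-wise addition of two interlacing patterns, and since the top row of $z + c$ is $\mu + \delta = \la$, the map $\Phi$ sends $\tpGT(\mu)$ into $\tpGT(\la)$. Being a translation by a fixed vector in $\R^{n(n-1)/2}$, it is injective with unit Jacobian. Finally, since each row of $c$ sums to zero (including the top row, as $\abs{\delta} = 0$), the formula for $\alpha'$ in \cref{eq-alpha-p} yields $\alpha'(z + c) = \alpha'(z) + \alpha'(c) = \alpha'(z)$.

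Putting these properties into the integral representation \cref{eq:tao-S-def} gives
\begin{equation*}
  S_\mu(x) = \int_{\tpGT(\mu)} e^{x \cdot \alpha'(z)}\, dz
         = \int_{\Phi(\tpGT(\mu))} e^{x \cdot \alpha'(z')}\, dz'
         \leq \int_{\tpGT(\la)} e^{x \cdot \alpha'(z')}\, dz' = S_\la(x),
\end{equation*}
where the middle equality is the change of variables $z' = \Phi(z)$ (Jacobian $1$ and $\alpha'$-preserving), and the inequality uses $\Phi(\tpGT(\mu)) \subseteq \tpGT(\la)$ together with non-negativity of the integrand. The only ``hard part'' here is really noticing that the hypotheses on $\delta$ are precisely what guarantees the existence of the auxiliary pattern $c$; once that is recognised, the rest of the argument is bookkeeping.
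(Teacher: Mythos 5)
Your proof is correct, and although the essential combinatorial ingredient — non-emptiness of $\GT(\delta, \vec{0})$ together with the fact that GT patterns can be added coordinate-wise — is the same as the paper's, you arrive at the conclusion by a genuinely shorter route. The paper first invokes the co-area representation of Lemma \ref{lem:geom-quant} to write $S_\lambda(x)$ as an integral over the permutohedron $\SH(\lambda)$ with integrand $V_{\lambda\nu}\exp(x\cdot\nu)$, then establishes $\SH(\lambda) \supseteq \SH(\mu)$ and the fiber-wise inequality $V_{\lambda\nu} \geq V_{\mu\nu}$ via the Minkowski-sum containment $\GT(\lambda, \nu) \supseteq \GT(\mu, \nu) + \GT(\delta, \vec{0})$, and finally substitutes. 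You instead work directly with the Gibbsian definition \cref{eq:tao-S-def}: picking one element $c \in \GT(\delta, \vec{0})$, the translation $z \mapsto z + c$ is unimodular, carries $\tpGT(\mu)$ into $\tpGT(\lambda)$, and preserves the energy $\alpha'$, so the conclusion follows by positivity of the integrand. This saves the detour through Lemma \ref{lem:geom-quant} and is self-contained at the pattern level; it gives a single global translation rather than the paper's fiber-by-fiber Minkowski sum argument. One small notational tidy-up: the $c$ you take from $\GT(\delta, \vec{0})$ lives in $\R^{n(n+1)/2}$ and includes the top row $\delta$, whereas elements of $\tpGT(\mu)$ have the top row dropped; the map you actually want is $z \mapsto z + p(c)$ on $\tpGT(\mu)$, with the observation that the (omitted) top rows $\mu$ and $\mu + \delta = \lambda$ match up so that the integrand's $n$th summand $\left(\abs{\lambda} - \abs{(z + p(c))_{n-1,\cdot}}\right)x_n$ equals $\left(\abs{\mu} - \abs{z_{n-1,\cdot}}\right)x_n$ because $\abs{\delta} = 0$ and $\abs{c_{n-1,\cdot}} = 0$. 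You clearly intend this, but it is worth making explicit.
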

\begin{proof}
  Since $\lambda \succeq \mu$, $\mu \succeq \nu$ implies $\lambda \succeq
  \nu$. Thus, $\SH(\lambda) \supseteq \SH(\mu)$.  Further, $\GT(\delta, \vec{0})$
  is non-empty (since $\delta \succeq \vec{0}$), and for any
  $\nu \in \SH(\mu)$,
  $\GT(\lambda, \nu) \supseteq \GT(\mu, \nu) + \GT(\delta, \vec{0})$ (where the
  sum is a Minkowski sum), so that
  $V_{\lambda \nu} = \lvol{\GT(\lambda, \nu)} \geq \lvol{\GT(\mu, \nu)} = V_{\mu
    \nu}$.  Substituting these in \cref{eq:geom-quant} yields the claimed
  inequality.
\end{proof}

We will need the following modified form of the function $g_{\lambda, \mu}$
defined in \cref{prop:log-convex-cont-schur}.
\begin{observation}\label{ob:project}
  Define the map $q: \R^n \rightarrow \R^{n-1}$ by
  \begin{equation}
    q(x_1, x_2, \dots, x_n) \defeq (x_1-x_n, x_2 - x_n, \dots, x_{n-1} - x_n).
  \end{equation}
  Further, with $\lambda, \mu$ as in \cref{prop:log-convex-cont-schur} define
  $\tg: \R^{n-1} \rightarrow \R$ as
  \begin{equation}
    \tg_{\lambda, \mu}(y) \defeq
    \log
    \int\limits_{\tilde{\nu} \in \pSH(\lambda)}
    V_{\lambda, p_{\lambda}^{-1}(\tilde{\nu})}
    \cdot
    \exp(y\cdot(\tilde{\nu} - p(\mu)))
    \quad
    \prod\limits_{1 \leq i \leq n - 1} d\tilde{\nu}_{i}.
  \end{equation}
  Here $p: \SH(\lambda) \rightarrow \pSH(\lambda)$ and
  $p_{\lambda}^{-1}: \pSH(\lambda) \rightarrow \SH(\lambda) $ are as defined in
  \cref{eq:22,eq:63}.  Then we have, for all $x \in \R^n$, and all
  $\mu \in \SH(\lambda)$,
  \begin{equation}
    \label{eq-g-hat-to-g}
    \tg_{\lambda, \mu}(q(x)) = g_{\lambda,\mu}(x).
  \end{equation}
  In particular,
  \begin{equation}
    \label{eq:62}
    \mathop{\inf}\limits_{x \in \R^n} g_{\lambda,\mu}(x) =
    \mathop{\inf}\limits_{y \in \R^{n-1}} \tg_{\lambda,\mu}(y),
  \end{equation}
  and $\tg_{\lambda, \mu}$ is a convex function on $\R^{n-1}$.
\end{observation}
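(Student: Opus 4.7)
The plan is to deduce the identity \cref{eq-g-hat-to-g} by rewriting $\log S_\lambda(x)$ using the integral representation from \cref{lem:geom-quant} and then performing a change of variables that reduces the integration from $\SH(\lambda)$ (which lies in the hyperplane $\abs{\nu}=\abs{\lambda}$) to its projection $\pSH(\lambda)\subseteq\R^{n-1}$. Concretely, I would parameterize $\nu\in\SH(\lambda)$ by $\tilde{\nu}=p(\nu)$, so that $\nu_n=\abs{\lambda}-\abs{\tilde{\nu}}$ and
\begin{equation*}
  x\cdot\nu \;=\; \sum_{i=1}^{n-1}(x_i-x_n)\tilde{\nu}_i + x_n\abs{\lambda}
  \;=\; q(x)\cdot\tilde{\nu} + x_n\abs{\lambda}.
\end{equation*}
Plugging this into the representation of $S_\lambda(x)$ from \cref{lem:geom-quant} and using that $\abs{\mu}=\abs{\lambda}$ (since $\mu\in\SH(\lambda)$), the term $x_n\abs{\lambda}-x^{\top}\mu$ collapses to $-q(x)\cdot p(\mu)$, which combines with $q(x)\cdot\tilde{\nu}$ inside the logarithm to produce exactly the integrand $\exp\!\bigl(q(x)\cdot(\tilde{\nu}-p(\mu))\bigr)$ appearing in the definition of $\tg_{\lambda,\mu}$.

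Once the identity \cref{eq-g-hat-to-g} is established, the equality of infima in \cref{eq:62} follows immediately from the surjectivity of $q:\R^{n}\to\R^{n-1}$: every $y\in\R^{n-1}$ arises as $q(x)$ for some $x$ (for example, by appending a zero in the $n$th coordinate), so the image of $g_{\lambda,\mu}$ coincides with the image of $\tg_{\lambda,\mu}$. For the convexity of $\tg_{\lambda,\mu}$, I would deduce it from the convexity of $g_{\lambda,\mu}$ (proved in \cref{prop:log-convex-cont-schur}) together with linearity of $q$: given $y_1,y_2\in\R^{n-1}$ and $\theta\in(0,1)$, lift them to $x_1,x_2\in\R^n$ with $q(x_i)=y_i$, observe $q(\theta x_1+(1-\theta)x_2)=\theta y_1+(1-\theta)y_2$, and apply \cref{eq-g-hat-to-g} on both sides of the convexity inequality for $g_{\lambda,\mu}$.

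The only real bookkeeping hazard is keeping track of the normalization factor $\sqrt{(n-1)!}$ that relates $V_{\lambda,\nu}$ and $\tV_{\lambda,\nu}$ in \cref{eq:vol-relation}; this contributes only an additive constant to the logarithm and so does not affect either \cref{eq-g-hat-to-g} (once the $V$ versus $\tV$ convention is fixed consistently with \cref{lem:geom-quant}) or the two downstream conclusions. The computation is otherwise routine, and I do not foresee a substantive obstacle.
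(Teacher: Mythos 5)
Your proposal is correct and follows essentially the same route as the paper: rewrite $\log S_\lambda(x)$ via the integral representation of \cref{lem:geom-quant}, observe that $x\cdot(\nu-\mu) = q(x)\cdot(p(\nu)-p(\mu))$ when $\abs{\nu}=\abs{\mu}=\abs{\lambda}$, and then obtain the infimum equality from surjectivity of $q$ (realized by appending $0$) and convexity from the fact that $\tg_{\lambda,\mu}$ is a composition of the convex $g_{\lambda,\mu}$ with a linear map. You are also right to flag the $\sqrt{(n-1)!}$ bookkeeping: as written, the definition of $\tg_{\lambda,\mu}$ uses $V_{\lambda,\cdot}$ while \cref{lem:geom-quant} carries a $\frac{1}{\sqrt{(n-1)!}}$ prefactor, so the stated identity is off by the harmless additive constant $\tfrac12\log((n-1)!)$ unless one reads $V$ as $\tV$ there; this does not affect the downstream infimum or convexity conclusions, and your caveat is precisely the right observation.
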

\begin{proof}
  \Cref{eq-g-hat-to-g} follows by substituting the following identity in the definition
  of $g_{\lambda,\mu}(x)$ (\cref{eq:38} combined with \cref{eq:geom-quant}): for
  any $x \in \R^n$ and $\nu, \mu \in \SH(\lambda)$ (which therefore satisfy
  $\abs{\mu} = \abs{\nu} = \abs{\lambda}$, so that
  $\nu_n = \abs{\lambda} - \abs{p(\nu)}$ and
  $\mu_n = \abs{\lambda} - \abs{p(\mu)}$),
  \begin{equation}
    x\cdot(\nu - \mu) = p(x)\cdot(p(\nu) - p(\mu)) + x_{n}(\abs{p(\mu)} - \abs{p(\nu)})
    = q(x)\cdot(p(\nu) - p(\mu)).
  \end{equation}
  \Cref{eq:62} then follows by noting that for every $y \in \R^{n-1}$,
  $g_{\lambda, \mu}((y_1, \dots,y_{n-1}, 0)) = \tg(y)$.  The convexity of
  $\tg_{\lambda,\mu}$ also follows from this representation of
  $\hat{g}_{\lambda, \mu}$ as a composition of the convex function
  $g_{\lambda,\mu}$ (\cref{prop:log-convex-cont-schur}) with a linear function.
\end{proof}
We can now prove the requisite lower bound on the volume of the Kostka polytope
\(\GT(\lambda, \mu)\).
\begin{proposition}\label{prop:correct-mean}
  Let $\lambda$, $\mu$ and $g_{\lambda, \mu}$ be as in the statement of
  \cref{prop:log-convex-cont-schur}.  Consider
  \begin{displaymath}
    x^{*} \in \mathop{\arg\min}\limits_{x \in \R^n} g_{\lambda,\mu}(x).
  \end{displaymath}
  Then, the following are true.
  \begin{enumerate}
  \item The mean of the probability distribution with density proportional to
    $f_{\lambda,x^{*}}$ (as defined in \cref{eq:f-def}) is
    $\mu$. \label{item:correct-mean}
  \item \label{item:lower-bound} $V_{\lambda, \mu} \geq
\frac{\sqrt{(n-1)!} \cdot S_{\lambda}(x^{*}) }{\exp(n + x^{*}\cdot\mu) \cdot \lvol{\pSH(\lambda)} }$. \end{enumerate}
\end{proposition}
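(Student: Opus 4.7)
My plan is to prove the two items in order: item~\ref{item:correct-mean} follows directly from the first-order optimality of $x^*$ applied to the Laplace-transform representation of $S_\lambda$ from \cref{lem:geom-quant}, while item~\ref{item:lower-bound} follows by applying Fradelizi's bound (\cref{them:fradelizi}) to the log-concave density $f_{\lambda,x^*}$, evaluated at the mean identified in item~\ref{item:correct-mean}.

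For item~\ref{item:correct-mean}, I would first observe that the scaling relation \cref{eq-scale-shift-s} combined with $\abs{\mu} = \abs{\lambda}$ (which holds because $\mu \in \SH(\lambda)$) implies $g_{\lambda,\mu}(x + c\one) = g_{\lambda,\mu}(x)$ for all $c \in \R$; hence $\one \cdot \nabla g_{\lambda,\mu} \equiv 0$, and minimality of $x^*$ therefore forces $\nabla g_{\lambda,\mu}(x^*) = 0$ in all $n$ coordinates (equivalently, one can pass to $\tg_{\lambda,\mu}$ on $\R^{n-1}$ as in \cref{ob:project}). On the other hand, differentiating the integral representation in \cref{lem:geom-quant} under the integral sign (which is justified by the boundedness of $\SH(\lambda)$) yields
\begin{equation*}
  \nabla_x \log S_\lambda(x)
  \;=\;
  \frac{\int_{\nu \in \SH(\lambda)} \nu \, V_{\lambda,\nu} \exp(x \cdot \nu)\, d\nu}
       {\int_{\nu \in \SH(\lambda)} V_{\lambda,\nu} \exp(x \cdot \nu)\, d\nu},
\end{equation*}
which is precisely the mean of the probability density proportional to $f_{\lambda,x}$. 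Setting $x = x^*$ and rearranging $\nabla g_{\lambda,\mu}(x^*) = 0$ as $\nabla \log S_\lambda(x^*) = \mu$ gives item~\ref{item:correct-mean}.

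For item~\ref{item:lower-bound}, \cref{cor:log-concave-f} tells me that $f_{\lambda, x^*}$ is log-concave, and its support $\SH(\lambda)$ is naturally realized via the projection $p$ as the $(n-1)$-dimensional body $\pSH(\lambda) \subset \R^{n-1}$. Since the mean of the corresponding probability distribution is $\mu$ by item~\ref{item:correct-mean}, \cref{them:fradelizi} applied in dimension $N = n-1$ gives $f_{\lambda,x^*}(v) \leq \exp(n-1)\, f_{\lambda,x^*}(\mu)$ for every $v \in \pSH(\lambda)$. Integrating this uniform upper bound over $v \in \pSH(\lambda)$ and invoking \cref{lem:geom-quant} to rewrite the left-hand side yields
\begin{equation*}
  \sqrt{(n-1)!}\, S_\lambda(x^*)
  \;=\; \int_{v \in \pSH(\lambda)} f_{\lambda,x^*}(v)\, dv
  \;\leq\; \exp(n-1)\, V_{\lambda,\mu}\, \exp(x^* \cdot \mu)\, \vol{\pSH(\lambda)},
\end{equation*}
which rearranges to the desired inequality (in fact with the slightly stronger factor $\exp(n-1)$ in place of $\exp(n)$).

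The main subtlety, rather than a genuine obstacle, is bookkeeping about the different measures involved: $f_{\lambda,x^*}$ is defined on $\SH(\lambda) \subset \R^n$, but Fradelizi's theorem and the factor $\vol{\pSH(\lambda)}$ are most naturally applied on $\pSH(\lambda) \subset \R^{n-1}$. Because $\abs{\mu} = \abs{\lambda}$, the map $p$ is a bijection between these two realizations, and \cref{lem:geom-quant} is stated in a way consistent with this identification, so the factor $\sqrt{(n-1)!}$ is the only thing to track with care. With these conventions fixed, both items follow from a single differentiation under the integral sign together with a direct invocation of Fradelizi's theorem.
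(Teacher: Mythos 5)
Your proof is correct and follows essentially the same route as the paper: item 1 is obtained by observing that the gradient of $g_{\lambda,\mu}$ at a minimizer vanishes and equals $\mean{f_{\lambda,x^*}} - \mu$, and item 2 is obtained by applying Fradelizi's bound to the log-concave $f_{\lambda,x^*}$ and substituting back into the integral representation of $S_\lambda$ from \cref{lem:geom-quant}. The only notable difference is that you exploit the precise dimension $n-1$ of $\SH(\lambda)$ (valid here since $\lambda$ has distinct parts) to get the slightly sharper $\exp(n-1)$, whereas the paper loosely bounds the effective dimension $n' < n$ and uses $\exp(n') \leq \exp(n)$; this is a harmless strengthening. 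Your detour through the translation-invariance $g_{\lambda,\mu}(x+c\one)=g_{\lambda,\mu}(x)$ to justify $\nabla g_{\lambda,\mu}(x^*)=0$ is unnecessary — first-order optimality of a differentiable convex function already gives this — and the paper instead uses that invariance (via \cref{ob:project} and \cref{lem:g-lower-bound}) to establish that a finite minimizer exists, a technicality you could mention for completeness even though the statement assumes one is given.
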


\begin{proof}
  From \cref{prop:log-convex-cont-schur}, $g_{\lambda,\mu}$ is a differentiable
  convex function on $\R^{n}$.  Further, using the notation of
  \cref{them:fradelizi,cor:log-concave-f}, the gradient $Dg_{\lambda, \mu}$ of
  $g_{\lambda,\mu}$ is given by
  \begin{equation}
    \label{eq:15}
    Dg_{\lambda, \mu}(w) = \mean{f_{\lambda, w}} - \mu.
  \end{equation}
  Since $g_{\lambda, \mu}$ is convex and differentiable, the gradient must
  vanish at any minimum of $g_{\lambda, \mu}$.  \Cref{item:correct-mean}
  therefore would follow from \cref{eq:15}, if we establish the technical point
  that the minimum is actually achieved at some finite $x^{*}$.  This follows
  from \cref{eq-g-hat-to-g} which relates values of $\tg_{\lambda, \mu}$ to
  those of $g_{\lambda, \mu}$, and from the lower bound proportional to
  $\norm{y}$ on $\tg_{\lambda, \mu}(y)$ (for large $\norm{y}$) shown in
  \cref{lem:g-lower-bound} below, which implies that $\tg_{\lambda, \mu}$ is a
  convex function which must achieve its minimum in a compact subset of
  $\R^{n-1}$.

  To obtain \cref{item:lower-bound}, we now pick an $x^{*}$ as in
  \cref{item:correct-mean}, and consider the function
  $f_{\lambda, x^{*}}(\nu) = V_{\lambda \nu}\exp(x^{*}\cdot \nu)$ defined in \cref{eq:f-def}.  By
  \cref{cor:log-concave-f}, $f_{\lambda, x^{*}}$ is a log-concave function supported
  on the closed convex set \(\SH(\lambda)\).  Further, after a linear transformation,
  the convex set \(\SH(\lambda)\) can be viewed as a convex body in \(\R^{n'}\) for
  some \(n' < n\).  Thus, we can apply \cref{them:fradelizi} to upper bound the
  maximum value of \(f_{\lambda, x^{*}}\).  Applying \cref{them:fradelizi} to
  $f_{\lambda, x^{*}}$, and recalling from \cref{item:correct-mean} that
  $\mean{f_{\lambda,x^{*}}} = \mu$, we therefore get the inequality
  \begin{equation}
    f_{\lambda, x^{*}}(\nu) \leq \exp(n') \cdot f_{\lambda, x^{*}}(\mu) \leq \exp(n) \cdot f_{\lambda, x^{*}}(\mu).
  \end{equation}
  for every $\nu \in \SH(\lambda)$.  Substituting this on the right hand side of
  \cref{eq:geom-quant} (and using the definition of $f_{\lambda, x^{*}}$ in
  \cref{eq:f-def}), we thus get
  \begin{equation}
    \sqrt{(n-1)!}\cdot
    S_{\lambda}(x^{*})
\leq
\exp(n)
\cdot V_{\lambda, \mu}
\cdot \exp(x^{*}\cdot \mu)
\cdot \lvol{{\pSH(\lambda)}}.
\end{equation}
  Rearranging, this gives \cref{item:lower-bound}.
\end{proof}

\section{Upper bounding the volume: Geometric properties of
     \texorpdfstring{$\GT(\lambda, \mu)$}{GT(λ, μ)}}
\label{sec-geom-pro}
In this section, we prove an upper bound on the volume of the Kostka polytope
\(\GT(\lambda, \mu)\), by analyzing the geometry of this polytope.  The main result of
this section is \cref{prop:vol-upper-estimate}, which gives such an upper bound.
Towards this, we first develop an understanding of how the volume of
\(\GT(\lambda, \mu)\) changes as \(\mu\) is perturbed (see \cref{lem:2.12}).  This, in
turn, is achieved by showing that there exists a point in the Kostka polytope
that is sufficiently far from all bounding hyperplanes (see
\cref{lem:2.10,prop-pgt-ball}), and then studying homotheties of the polytope
centered at this point (see the proof of \cref{lem:2.12}).

The above strategy works well when the pair \((\lambda, \mu)\) is ``well-conditioned''
in a certain sense: we now proceed to define certain quantities that formalize
these conditioning properties.

\begin{definition}[\textbf{Conditioning properties of $(\lambda, \mu)$}]\label{def-gap}
  First, given a vector $v \in \R^n$, let
  $\overline{v} \defeq \frac{\sum_{i=1}^n v_i}{n}$. Let $\one \in \R^n$ be the
  vector, every coordinate of which equals $1.$

  Given a non-increasing $n$-tuple $\lambda$, we define
  $\gap \defeq \min\limits_{i \in [n-1]}(\la_i - \la_{i+1}),$ so that
  $\lvol{\GT(\lambda)} = 0$ when ${\gap} = 0$.  A related quantity is
  \begin{equation}\label{eq-r0}
    r_0 = r_0(\lambda) \defeq
    \max\inb{\delta \st \abs{\lambda} = \abs{x} \text{ and }
      \norm[2]{\bar{\lambda}\vec{1} - x
      } \leq \delta \implies x \in \SH(\lambda)}.
  \end{equation}
  One can show that $r_0(\lambda) \geq \gap \sqrt{n-1}/2$: see \cref{lem-r0-bound} below.
We now turn to $\mu$.  We define
 \begin{equation}
   \begin{aligned}
     \tau = \tau(\la, \mu)
     &\defeq \sup\{\de \in [0, 1]
       \st
       \frac{\mu - \overline{\la}\one}{1-\de} + \overline{\la}\one \in \SH(\la)\}\\
     &= \sup\{\de \in [0, 1]
       \st \exists \nu \in \SH(\lambda) \text{ for which }
       \mu = \delta\overline{\lambda}\one + (1-\delta)\nu
       \},
   \end{aligned}   \label{eq:tau-def}
 \end{equation}
 which is a measure of how far $\mu$ is from the boundary of $\SH(\lambda)$.  In
 particular, if $\lambda$ and $\mu$ have integer entries and $\mu$ is in the
 relative interior of $\SH(\lambda)$, then
 $\tau(\lambda,\mu) \geq \frac{1}{n\lambda_1}$ (see \cref{lem-tau-lower-bound}).

 In
 \cref{lem:2.10} we study the related quantity
 \begin{equation}
   \label{eq-r-def}
   r(\lambda, \mu) \defeq \tau(\la, \mu) \gap/4.
 \end{equation}
 Given the above quantities, we define below a condition parameter
 $\bdrad(\lambda, \mu)$ for $\GT(\lambda, \mu)$, which turns out to be important
 in our analysis. Roughly speaking, we view $\GT(\lambda, \mu)$ as being well
 conditioned when $\bdrad(\lambda, \mu)$ is large.
 \begin{equation}
    \label{eq-bdrad-def}
    \bdrad(\lambda, \mu) \defeq\frac{1}{\sqrt{n}}
    \cdot \min\inb{
      \tau(\lambda, \mu)r_0(\lambda),
      \frac{r(\lambda, \mu)}{4n^{3/2}}
    } = \frac{\tau(\lambda, \mu)}{\sqrt{n}}
    \cdot \min\inb{
      r_0(\lambda),
      \frac{\gap}{16 n^{3/2}}
    }.
  \end{equation}
\end{definition}
\paragraph{\textbf{Positivity of the conditioning parameter}} Using the lower
bound on \(r_{0}(\lambda)\) in \cref{lem-r0-bound} below, we immediately obtain
the following lower bound on \(\bdrad(\lambda, \mu)\):
\begin{equation}
  \label{eq:1}
  \bdrad(\lambda, \mu) \geq \frac{\tau(\lambda, \mu)\cdot \gap}{16 n^2}.
\end{equation}
Thus, we see that when \(\lambda\) has distinct entries (thus, \({\gap} > 0\))
and \(\mu\) lies in the relative interior of \(\SH(\lambda)\) (so that
\(\tau(\lambda, \mu) > 0\)), \(\bdrad(\lambda, \mu)\) is strictly positive.
Note that when either of these two conditions fails, the polytope
\(\GT(\lambda, \mu)\) lies in an affine subspace of dimension strictly less than
\(\binom{n-1}{2}\), and hence its \(\binom{n-1}{2}\)-dimensional volume
\(V_{\lambda, \mu}\) is \(0\).

\begin{lemma}[\textbf{Lower bound on $r_{0}(\lambda)$}]
  \label{lem-r0-bound}
  Let $\lambda$ be a non-increasing $n$-tuple.  Then
  $r_{0}(\lambda) \geq \gap\sqrt{n-1}/2$.
\end{lemma}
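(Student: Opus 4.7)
The plan is to verify that every $x \in \R^n$ satisfying $\abs{x} = \abs{\lambda}$ and $\norm[2]{x - \bar{\lambda}\one} \leq \delta \defeq \gap\sqrt{n-1}/2$ lies in $\SH(\lambda)$; this will establish the claimed lower bound on $r_0(\lambda)$. By \cref{eq:majorization} and the nonnegativity constraint built into the definition of $\SH(\lambda)$, this reduces to checking (a) $x \geq 0$ componentwise, and (b) $\sum_{j=1}^i x_{(j)} \leq \sum_{j=1}^i \lambda_j$ for every $1 \leq i \leq n-1$, where $x_{(1)} \geq \cdots \geq x_{(n)}$ denotes the decreasing rearrangement of $x$.

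For (a), I would first telescope the minimum-gap condition and use $\lambda_n \geq 0$ to obtain $\lambda_j \geq (n-j)\gap$. Averaging then yields $\bar{\lambda} \geq \gap(n-1)/2 \geq \delta$ for $n \geq 2$, so every coordinate satisfies $x_j \geq \bar{\lambda} - \norm[2]{x - \bar{\lambda}\one} \geq \bar{\lambda} - \delta \geq 0$.

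For (b), I would write $x = \bar{\lambda}\one + \epsilon$, so $\sum_j \epsilon_j = 0$ (because $\abs{x}=\abs{\lambda}$) and $\norm[2]{\epsilon} \leq \delta$. Since adding a constant to every coordinate preserves the ordering, $x_{(j)} = \bar{\lambda} + \epsilon_{(j)}$, and the required inequality reduces to $\sum_{j=1}^i \epsilon_{(j)} \leq S_i$, where $S_i \defeq \sum_{j=1}^i (\lambda_j - \bar{\lambda})$. I plan to bound both sides separately. For the left side, applying Cauchy--Schwarz to both $A \defeq \sum_{j=1}^i \epsilon_{(j)}$ and $-A = \sum_{j=i+1}^n \epsilon_{(j)}$ gives $A^2/i + A^2/(n-i) \leq \norm[2]{\epsilon}^2 \leq \delta^2$, hence $A \leq \delta\sqrt{i(n-i)/n}$. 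For the right side, the double-counting identity $S_i = \tfrac{1}{n}\sum_{j=1}^i \sum_{k=i+1}^n (\lambda_j - \lambda_k)$ combined with $\lambda_j - \lambda_k \geq (k-j)\gap$ yields $S_i \geq \gap \cdot i(n-i)/2$. These bounds produce the sufficient condition $\delta \leq \gap\sqrt{n\cdot i(n-i)}/2$, which over $1 \leq i \leq n-1$ is minimized at the endpoints $i \in \{1, n-1\}$ to give $\gap\sqrt{n(n-1)}/2 \geq \gap\sqrt{n-1}/2 = \delta$, as required. The only mildly nontrivial step is the double-counting identity for $S_i$, but that is a routine one-line verification; I do not anticipate a substantive obstacle.
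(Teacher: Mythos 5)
Your proof is correct, and in fact establishes a quantitatively stronger bound than the one stated: your argument yields $r_0(\lambda) \geq \gap\sqrt{n(n-1)}/2$ rather than $\gap\sqrt{n-1}/2$, as you note yourself when weakening at the final step. The underlying slack computation is the same as in the paper, but there is a genuine difference in how the slack gets converted into a radius bound. The paper computes, for each majorization constraint, the \emph{ambient} Euclidean distance in $\R^n$ from $\bar{\lambda}\one$ to the corresponding hyperplane: the slack $S_k \geq k(n-k)\gap/2$ is divided by $\sqrt{k}$ (the Euclidean norm of the ambient coefficient vector $(1,\dots,1,0,\dots,0)$), and the minimum over $k$ is taken. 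Since ambient distance lower-bounds distance within the slice $\{\abs{x}=\abs{\lambda}\}$, this is a valid, if lossy, bound. Your proof instead directly verifies the constraints for a perturbation $\epsilon$ satisfying $\sum_j\epsilon_j = 0$ and $\norm{\epsilon}\leq\delta$, and the dual pair of Cauchy--Schwarz applications (to $\sum_{j\leq i}\epsilon_{(j)}$ and to its negative $\sum_{j>i}\epsilon_{(j)}$) exploits the constraint $\sum_j \epsilon_j = 0$ to give $A \leq \delta\sqrt{i(n-i)/n}$, which is precisely the norm of the constraint normal \emph{projected into} the affine slice. That projection is why your bound picks up the extra $\sqrt{n}$. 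Your proof also explicitly checks the non-negativity constraint $x\geq 0$ (part (a)), which the paper omits; that check is not truly necessary --- for non-negative $\lambda$ the majorization inequalities already imply $x\geq 0$ via the $i=n-1$ inequality combined with $\abs{x}=\abs{\lambda}$ --- but it does no harm and is a reasonable prophylactic. In short: same key slack identity, but you do the distance computation in the correct ambient geometry, which is cleaner and sharper.
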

\begin{proof}
  The point $\overline{\lambda}\one$ is the average of all permutations of
  $\lambda$ and thus lies in $\SH(\lambda)$.  We now compute its distance from
  the hyperplanes corresponding to the inequality constraints in the definition
  of $\SH(\lambda)$ (\cref{eq:majorization}).  Consider the $k$th inequality
  constraint in \cref{eq:majorization} (for any permutation $\tau$, which is not
  really relevant since all entries of $\overline{\lambda}\one$ are the same).
  A direct computation then gives
  \begin{equation}
    \label{eq:2}
    \begin{gathered}
      \sum_{j=k+1}^n\lambda_j \leq (n-k)\lambda_k - \gap\sum_{j=k+1}^{n}(j-k) =
      (n-k)\lambda_{k} - \frac{(n-k)(n-k+1)\gap}{2}\text{, and }\\
      \sum_{j=1}^k\lambda_j \geq k\lambda_k + \gap\sum_{j=1}^k{(k - j)} =
      k\lambda_k + \frac{k(k-1)\gap}{2}.
    \end{gathered}
  \end{equation}
  Since $\overline{\lambda} = \frac{1}{n}\sum_{j=1}^n\lambda_j$, the slack in the
  $k$th inequality in \cref{eq:majorization} is therefore
  \begin{equation}\label{eq-mean-diff}
    \begin{aligned}
      \sum_{j=1}^k\lambda_{j} - k\overline{\lambda}
      = \inp{1 -\frac{k}{n}}\sum_{j=1}^k\lambda_j -
      \frac{k}{n}\sum_{j=k+1}^n\lambda_j
      \overset{\textup{\cref{eq:2}}}{\geq}
\frac{k(n-k)\gap}{2}.
    \end{aligned}
  \end{equation}
  Since the coefficient vector of this hyperplane has norm $\sqrt{k}$, it
  follows that the distance of $\overline{\lambda}\one$ from hyperplane(s)
  corresponding to the $k$th inequality in \cref{eq:majorization} is at least
  $\sqrt{k}(n-k)\gap/2$.  Taking the minimum of these distances over all
  $1 \leq k \leq n -1$, we thus get $r_{0}(\lambda) \geq \gap\sqrt{n-1}/2$.
\end{proof}

\begin{lemma}[\textbf{Lower bound on
    $\tau(\lambda, \mu)$}]\label{lem-tau-lower-bound}
  Let $\lambda$ be a partition with $n$ parts, and for $\mu \in \SH(\lambda)$, let
  $d_{\lambda\mu}$ be the shortest distance of $\mu$ from any of the hyperplanes
  corresponding to the inequality constraints defining $\SH(\lambda)$
  (\cref{eq:majorization}).  Then
  $\tau(\lambda, \mu) \geq \frac{d_{\lambda \mu}}{\lambda_1\sqrt{n}}$.  In particular, if
  $\lambda$ and $\mu$ have integer entries and $\mu$ is in the relative interior of
  $\SH(\lambda)$, then $d_{\lambda \mu} \geq \frac{1}{\sqrt{n}}$, so that
  $\tau(\lambda,\mu) \geq \frac{1}{n\lambda_1}$.
\end{lemma}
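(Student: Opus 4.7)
The plan is to give a direct geometric construction of a specific $\delta$ that belongs to the set in the supremum defining $\tau(\lambda,\mu)$ in \cref{eq:tau-def}. I would consider the ray from the centroid $\bar\lambda \one$ through $\mu$, and let $\nu \in \partial \SH(\lambda)$ denote its exit point (the degenerate case $\mu = \bar\lambda \one$ gives $\tau(\lambda,\mu) = 1$ trivially). Since $\mu$ lies on the segment between $\bar\lambda \one$ and $\nu$, one can write $\mu = \delta \bar\lambda \one + (1-\delta)\nu$ with $\delta = \|\nu - \mu\|/\|\nu - \bar\lambda \one\|$, so this ratio is itself admissible in the supremum defining $\tau(\lambda,\mu)$.

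I would then bound numerator and denominator separately. Because the relatively open ball of radius $d_{\lambda\mu}$ around $\mu$ in the affine hull of $\SH(\lambda)$ lies inside $\SH(\lambda)$ (this is exactly the defining property of $d_{\lambda\mu}$, which concerns only the inequality-defining hyperplanes, not the equality hyperplane cutting out the affine hull), and because $\nu$ lies on $\partial \SH(\lambda)$ along a line through $\mu$ in that affine hull, I obtain $\|\nu - \mu\| \geq d_{\lambda\mu}$. For the denominator, every $\nu \in \SH(\lambda)$ is a convex combination of permutations of $\lambda$, so each coordinate $\nu_i$ lies in $[\lambda_n, \lambda_1] \subseteq [0, \lambda_1]$; the same holds for the scalar $\bar\lambda$, so $|\nu_i - \bar\lambda| \leq \lambda_1$ coordinate-wise, which gives $\|\nu - \bar\lambda \one\| \leq \lambda_1 \sqrt{n}$. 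Combining these two inequalities yields the first assertion.

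For the second assertion, I would analyze $d_{\lambda\mu}$ directly using \cref{eq:majorization}. Every inequality there has the form $\sum_{j=1}^i \mu_{\tau(j)} \leq \sum_{j=1}^i \lambda_j$ for some $1 \leq i \leq n-1$ and some permutation $\tau$; the coefficient vector of $\mu$ in this constraint has Euclidean norm $\sqrt{i} \leq \sqrt{n}$. When both $\lambda$ and $\mu$ have integer entries and $\mu$ lies in the relative interior of $\SH(\lambda)$, the slack $\sum_{j=1}^i \lambda_j - \sum_{j=1}^i \mu_{\tau(j)}$ is a strictly positive integer, hence at least $1$. Dividing by the norm of the coefficient vector gives $d_{\lambda\mu} \geq 1/\sqrt{n}$, and feeding this into the first assertion yields $\tau(\lambda,\mu) \geq 1/(n \lambda_1)$.

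The only subtlety in the argument is the identification of the relevant distances: one must be careful that $d_{\lambda\mu}$ refers only to the inequality-defining hyperplanes, so that the appropriate ball around $\mu$ is contained in $\SH(\lambda)$ when intersected with its affine hull, which is exactly what is needed for the ray argument in the first paragraph. Once that is clear, the rest of the proof is routine linear algebra combined with the elementary coordinate bound on points of $\SH(\lambda)$.
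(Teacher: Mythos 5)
Your proof is correct and follows essentially the same route as the paper: extend the ray from $\overline{\lambda}\one$ through $\mu$ to its exit point on $\partial\SH(\lambda)$, and bound the ratio $\tau$ by estimating the length from $\mu$ to that exit point (lower bound $d_{\lambda\mu}$) against the length from $\overline{\lambda}\one$ to the exit point (upper bound $\lambda_1\sqrt{n}$). The only small variations are technical: the paper invokes similar triangles to convert the chord ratio into a ratio of perpendicular distances to the supporting hyperplane $H$, and bounds the denominator via the slack of the majorization inequality at $\overline{\lambda}\one$, whereas you work directly with chord distances, using the relative-interior ball of radius $d_{\lambda\mu}$ for the numerator and the coordinate bound $\nu_i, \overline{\lambda} \in [\lambda_n, \lambda_1]$ for the denominator; both give the same final estimate, and your edge-case remark for $\mu = \overline{\lambda}\one$ is a minor but welcome addition.
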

\begin{proof}
  Consider the line segment joining $\overline \la$ and $\mu$, and extend it to
  the point $ \frac{\mu - \overline{\la}\one}{1-\tau} + \overline{\la}\one$ on
  the boundary of $\SH(\lambda)$, which lies on one of the hyperplanes
  (corresponding to an inequality constraint in \cref{eq:majorization}) bounding
  $\SH(\la)$, which we denote by $H$.  Given a point $x$, let $\dist(x, H)$
  denote the Euclidean distance of the point $x$ to $H$, so that
  $\dist(\mu, H) \geq d_{\lambda \mu}$.  Now, by similar triangles,
  $$\frac{ \dist(\mu, H)}{\dist(\overline \la \vec{1}, H)} = \tau.$$
  The first claim now follows since a consideration of the slack in any of the
  inequality constraints in \cref{eq:majorization} for the point
  $\overline{\lambda}\one$ shows that
  $\dist(\overline{\lambda}\one, H) \leq \sqrt{n}\lambda_1$.

  If $\mu$ and $\la$ are integral, $\mu$ is in the relative interior of
  $\SH(\la)$, and $H$ is a hyperplane of the form given by
  (\ref{eq:majorization}), then the slack of the corresponding inequality
  constraint is at least $1$, so that the distance between $\mu$ and $H$ is at
  least $\frac{1}{\sqrt{n}}$.  Thus, in this case $d_{\lambda\mu} \geq 1/\sqrt{n}$, as
  claimed.
\end{proof}

Substituting the above bounds in \cref{eq-bdrad-def} thus gives that when
\(\lambda\) is a partition with \(n\) parts that are distinct integers, and
\(\mu\) is an integral point in the relative interior of \(\SH(\lambda)\), then
\begin{equation}
  \label{eq-bdrad-bound}
  \bdrad(\lambda, \mu) \geq \frac{\gap}{16 \lambda_1  n^3} \geq \frac{1}{16 \lambda_1  n^3}.
\end{equation}

\begin{figure}
  \centering
  \includegraphics{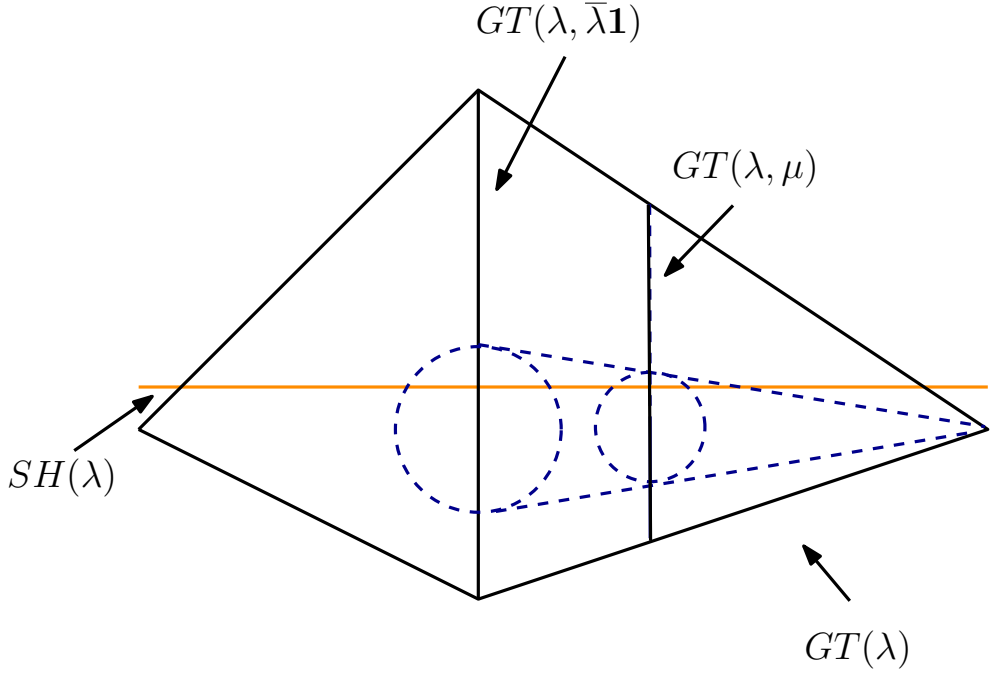}
  \caption{Convexity leads to a lower bound on the diameter of the largest ball contained in $\GT(\la, \mu)$.}
  \label{fig:convexity}
\end{figure}

\begin{lemma}\label{lem:2.10}
  Let \(\lambda\) be a non-increasing \(n\)-tuple with \(\gap > 0\), and assume that
  \(\mu \in \SH(\lambda)\). Then, $\GT(\la, \mu)$ contains an
  ${\binom{n-1 }{2}}$-dimensional Euclidean ball $B$ of radius
  $r(\lambda, \mu) = \tau(\la, \mu) \gap/4.$ Moreover, $\GT(\la)$ contains an
  ${\binom{n }{2}}$-dimensional ball $B'$ of the same radius and center
  $x_{\la\mu}$ as $B$.
\end{lemma}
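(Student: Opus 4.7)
My plan is to use the definition of $\tau$ to decompose $\mu$ as a convex combination, and then build the center $x_{\lambda\mu}$ as the corresponding convex combination of GT patterns. By the definition of $\tau(\lambda, \mu)$ in \cref{eq:tau-def}, there exists $\nu \in \SH(\lambda)$ such that $\mu = (1-\tau)\,\nu + \tau\, \bar\lambda\,\one$ (taking $\tau$ attained, or passing to the limit from below). The Schur-Horn correspondence recalled after \cref{def-schur-horn-polytope} then yields a point $y \in \GT(\lambda, \nu)$.

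The key step is to construct a point $x_0 \in \GT(\lambda, \bar\lambda\,\one)$ whose Euclidean distance from every interlacing hyperplane bounding $\GT(\lambda)$ is at least $\gap/4$. A natural first candidate is the \emph{midpoint} GT pattern $\bar x_{i,j} := (\lambda_j + \lambda_{j+n-i})/2$, for which a direct calculation shows that the slack at every interlacing constraint equals $(\lambda_k - \lambda_{k+1})/2 \geq \gap/2$; unfortunately, its $\alpha'$-image is the symmetric vector with entries $(\lambda_i + \lambda_{n+1-i})/2$, which need not equal $\bar\lambda\,\one$. Since $\bar\lambda\,\one \in \SH(\lambda)$ and $\alpha'$ is an affine surjection from $\GT(\lambda)$ onto $\SH(\lambda)$, one can nevertheless correct $\bar x$ to a point $x_0 \in \GT(\lambda, \bar\lambda\,\one)$ by a bounded perturbation; a quantitative analysis of this perturbation shows that $x_0$ still has Euclidean distance at least $\gap/4$ to every interlacing hyperplane. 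A cleaner alternative is to define $x_0 := (1/n!)\sum_\sigma y^\sigma$, where $y^\sigma \in \GT(\lambda, \sigma(\lambda))$ is the GT pattern associated with the diagonal Hermitian matrix $\mathrm{diag}(\sigma(\lambda))$; the image $\alpha'(x_0)$ then equals $\bar\lambda\,\one$ by symmetry, and the interlacing slack can be bounded by an explicit averaging argument.

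Finally, I set $x_{\lambda\mu} := (1-\tau)\, y + \tau\, x_0$. The convexity of $\GT(\lambda)$ and the linearity of $\alpha'$ guarantee that $x_{\lambda\mu} \in \GT(\lambda, \mu)$. Moreover, the signed Euclidean distance from any fixed hyperplane is an affine function of the point, and $y \in \GT(\lambda)$ has non-negative signed distance to every interlacing hyperplane, so the signed distance at $x_{\lambda\mu}$ is at least $\tau \cdot \gap/4 = r(\lambda, \mu)$ from each such hyperplane. Since the only constraints bounding $\GT(\lambda)$ are interlacing ones, the $\binom{n}{2}$-dimensional Euclidean ball $B' := B(x_{\lambda\mu}, r(\lambda,\mu))$ fits inside $\GT(\lambda)$; intersecting $B'$ with the affine subspace $\{z : \alpha'(z) = \mu\}$, which passes through the center $x_{\lambda\mu}$, yields the $\binom{n-1}{2}$-dimensional ball $B \subseteq \GT(\lambda, \mu)$ of the same radius and center. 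The main obstacle in this outline is the quantitative control on the corrective perturbation (or, in the symmetrization alternative, on the averaged slack) needed to obtain the lower bound of $\gap/4$ on the distance from $x_0$ to the interlacing hyperplanes.
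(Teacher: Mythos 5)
Your overall scaffolding is right and matches the paper's: reduce to finding a single point $x_0\in\GT(\lambda,\overline{\lambda}\one)$ whose ball is contained in $\GT(\lambda)$, then scale towards it by the factor $\tau$ via convexity, and finally intersect the full-dimensional ball with the fiber $\{z : \alpha'(z)=\mu\}$ to get the $\binom{n-1}{2}$-dimensional ball.  The construction of $x_0$, however, is where you and the paper part ways, and it is precisely there that your proposal has a real gap.

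The paper sidesteps the difficulty you run into by a Minkowski-sum decomposition.  Write $\lambda = \nu + \gamma$ with $\nu = ((n-1)\gap, (n-2)\gap, \dots, \gap, 0)$ and $\gamma = \lambda-\nu$ (still non-increasing, since $\gap$ is the minimum gap).  For the \emph{arithmetic-progression} partition $\nu$, the explicit pattern $x_{ij}=\bigl(\tfrac{n+i}{2}-j\bigr)\gap$ lies in $\GT(\nu,\overline{\nu}\one)$ and has uniform slack $\gap/2$ in every interlacing inequality, hence distance $\geq \gap/(2\sqrt{2})\geq \gap/4$ from every bounding hyperplane.  Adding any point of the nonempty polytope $\GT(\gamma,\overline{\gamma}\one)$ then gives a point of $\GT(\lambda,\overline{\lambda}\one)$ around which a ball of radius $\gap/4$ sits inside $\GT(\lambda)$, because $\GT(\lambda)\supset \GT(\nu)+\GT(\gamma)$.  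This way one never has to control a ``correction'' or an averaged slack for a general $\lambda$.

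Both of your avenues for producing $x_0$ directly fall short of the needed $\gap/4$.  For the corrective perturbation of the midpoint pattern $\bar x_{ij}=(\lambda_j+\lambda_{j+n-i})/2$: its $\alpha'$-image is $\eta$ with $\eta_i=(\lambda_i+\lambda_{n+1-i})/2$, and the distance $\|\eta - \overline{\lambda}\one\|$ is \emph{not} controlled by $\gap$ (take $\lambda=(M,1,0,\dots,0)$ with $M$ large: $\eta_1-\overline{\lambda}\approx M/2 - M/n \gg \gap=1$), so there is no a priori ``bounded perturbation'' that both restores $\alpha'(x_0)=\overline{\lambda}\one$ and preserves slack of order $\gap$.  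For the symmetrization $x_0 = \frac{1}{n!}\sum_\sigma y^\sigma$: an explicit computation of the slack at the top constraints $\lambda_j\ge x_{n-1,j}$ shows
\begin{equation}
  \lambda_j \;-\; \E\bigl[y^\sigma_{n-1,j}\bigr]
  \;=\; \lambda_j \;-\; \Bigl(\tfrac{n-j}{n}\lambda_j + \tfrac{j}{n}\lambda_{j+1}\Bigr)
  \;=\; \tfrac{j}{n}(\lambda_j - \lambda_{j+1}),
\end{equation}
which for $j=1$ is as small as $\gap/n$.  So the averaged pattern sits at distance only $\Theta(\gap/n)$ from the boundary, not $\gap/4$, and the resulting lemma would read $r(\lambda,\mu)=\Theta(\tau\gap/n)$.  (That weaker constant would in fact still suffice for the paper's downstream bounds, since $\bdrad$ and the final approximation factor only change by $\poly{n}$ factors, but it does not prove the lemma as stated.)  You correctly flag the quantitative control as the main obstacle; the point is that the obstacle is genuine for both of your constructions, and the paper's Minkowski-sum reduction to the uniform-gap partition $\nu$ is the device that removes it.
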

\begin{proof}
  We begin by proving the following claim.
  \begin{claim}
    $\GT(\la, \overline{\la}\one)$ contains an ${\binom{n-1 }{2}}$-dimensional
    Euclidean ball $B_1$ of radius \(\gap/4\). Moreover, $\GT(\la)$ contains an ${\binom{n }{2}}$-dimensional ball $B_1'$
    of the same radius and center as $B_1$.

\end{claim}
\begin{proof}[Proof of claim]
  Let $\nu = ((n-1)\gap, \dots, {\gap}, 0).$ In other words,
  $\nu_i = (n-i){\gap}$ for $1 \leq i \leq n$. Let $\gamma := \la - \nu$. Note that
  $\gamma$ is a non-increasing \(n\)-tuple with \(\gap[\gamma] = 0\).  We begin by proving
  that $\GT(\nu, \overline{\nu}\one)$ contains a point $x_\nu$ centered at which there
  is an ${\binom{n }{2}}$-dimensional ball of radius \(\gap/4\) that is
  contained in $\GT(\nu)$.  Consider the point
  $x_\nu \in \GT(\nu, \overline{\nu}\one)$ given by
  \begin{equation}
    x_{i\, j} = \inp{\frac{n+i}{2} - j}\gap \text { for \(1 \leq i \leq n \) and \(1 \leq
      j \leq i\)}.
  \end{equation}
Examining the slacks in the inequalities in \eqref{eq:gt-cons} corresponding to the Gelfand-Tsetlin polytope, we see that
  this point is the center of an ${\binom{n }{2}}$-dimensional ball of radius
  $\frac{\gap}{2\sqrt{2}} \geq {\gap}/4$ contained in $\GT(\nu)$.

  Next, since $\gamma \succeq \overline{\gamma}\one$, we have that
  $\GT(\gamma, \overline{\gamma}\one)$ is a nonempty polytope.  Lastly, we observe that
  $\GT(\la, \overline{\la}\one) \supset \GT(\nu, \overline{\nu}\one) + \GT(\gamma,
  \overline{\gamma}\one), $ and $\GT(\la) \supset \GT(\nu) + \GT(\gamma).$ As a consequence, we
  have the claim.
\end{proof}
Using a similar triangles argument, it now follows from the definition of
$\tau(\lambda, \mu)$ and the convexity of $\GT(\la)$ (see \cref{fig:convexity})
that $\GT(\la)$ contains a ball of radius $\tau(\la, \mu) {\gap}/4$ centered at
a point in $\GT(\la, \mu)$ and the lemma is proved.
\end{proof}

In preparation for the following lemma, we need the following simple proposition.
\begin{proposition}
  Let $x \in \GT(\lambda, \mu)$ be such that an $\binom{n-1}{2}$-dimensional
  Euclidean ball of radius $\delta > 0$ centered at \(x\) is contained in
  $\GT(\lambda, \mu)$.  Then, the $\binom{n-1}{2}$-dimensional Euclidean ball of
  radius $\delta/\sqrt{n-1}$ centered around $\tp(x)$ is contained in
  $\pGT(\lambda, \mu)$.\label{prop-pgt-ball}
\end{proposition}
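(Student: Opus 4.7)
The plan is to construct an explicit right inverse $\phi : \pGT(\lambda, \mu) \to \GT(\lambda, \mu)$ of $\tp$ and show that $\phi$ is $\sqrt{n-1}$-Lipschitz with respect to the ambient Euclidean distance. Granting such a $\phi$, for any $y \in \R^{(n-1)(n-2)/2}$ with $\norm{y - \tp(x)} \leq \delta/\sqrt{n-1}$ the point $z = \phi(y)$ satisfies $\norm{z - x} \leq \delta$; since the radius-$\delta$ ball about $x$ is contained in $\GT(\lambda, \mu)$ by hypothesis, $z \in \GT(\lambda, \mu)$, and hence $y = \tp(z) \in \pGT(\lambda, \mu)$, as required.

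The lift $\phi$ is forced by the defining equations of $\GT(\lambda, \mu)$: given $y = (y_{ij})$ with $2 \leq i \leq n-1$, $1 \leq j \leq i-1$, set $z_{ij} = y_{ij}$ on these indices, $z_{n,j} = \lambda_j$, and $z_{11} = \mu_1$, and use the equations $\abs{z_{i,\cdot}} - \abs{z_{i-1,\cdot}} = \mu_i$ (\cref{eq:gt-section-first,eq:gt-section}) to solve sequentially for the missing diagonal entries $z_{22}, z_{33}, \ldots, z_{n-1,n-1}$. To bound $\norm{z - x}$, write $v_{ij} = z_{ij} - x_{ij}$ and $u_{ij} = y_{ij} - (\tp(x))_{ij}$. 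Since both $z$ and $x$ satisfy $\abs{z_{i,\cdot}} = \abs{x_{i,\cdot}} = \sum_{k=1}^i \mu_k$, one has $\abs{v_{i,\cdot}} = 0$ for every $1 \leq i \leq n-1$. This immediately gives $v_{11} = 0$, and for $2 \leq i \leq n-1$ yields $v_{ii} = -\sum_{k=1}^{i-1} v_{ik} = -\sum_{k=1}^{i-1} u_{ik}$. Cauchy--Schwarz then produces $v_{ii}^2 \leq (i-1) \sum_{k=1}^{i-1} u_{ik}^2$, so $\sum_{i=1}^{n-1} v_{ii}^2 \leq (n-2)\norm{u}^2$, and consequently $\norm{z-x}^2 = \norm{v}^2 = \norm{u}^2 + \sum_{i=1}^{n-1} v_{ii}^2 \leq (n-1)\norm{u}^2$, i.e., $\norm{z - x} \leq \sqrt{n-1}\,\norm{y - \tp(x)}$.

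There is no substantive obstacle here: the interlacing inequalities \cref{eq:gt-cons} never enter the argument, since membership of $z$ in $\GT(\lambda, \mu)$ is enforced automatically by $z$ lying inside the radius-$\delta$ ball around $x$. The only non-trivial observation is the identity $\abs{v_{i,\cdot}} = 0$, which follows from the fact that the row sums of both $z$ and $x$ are pinned by $\mu$; everything else is an elementary Cauchy--Schwarz estimate on a coordinate projection.
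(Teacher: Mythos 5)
Your proof is correct and follows essentially the same route as the paper: construct the canonical lift $z = \phi(y)$ by filling in the diagonal entries via the row-sum constraints, observe that the equality constraints of $\GT(\lambda,\mu)$ are met automatically, and then bound $\norm{z-x}^2 - \norm{y - \tp(x)}^2$ by a Cauchy--Schwarz estimate on the $\binom{n-1}{2}$ free coordinates. Your Lipschitz framing of the lift $\phi$ is a slight repackaging, but the underlying computation (in particular the inequality $v_{ii}^2 \le (i-1)\sum_{k<i}u_{ik}^2 \le (n-2)\sum_{k<i}u_{ik}^2$) is identical to the paper's.
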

\begin{proof}
  Consider a point $\tilde{z}$ such that
  $\norm{\tilde{z} - \tp(x)} \leq \delta/\sqrt{n-1}$.  Consider the point $z$
  defined as follows:
  \begin{equation}
    \begin{gathered}
      z_{i\,j} = \tilde{z}_{i\,j}
      \text { for }2 \leq i \leq n -1, 1 \leq j \leq i - 1,\\
      z_{n\, j} = \lambda_{j} \text{ for } 1 \leq j \leq n\text{, and}\\
      z_{i\,i} = \sum_{k=1}^i\mu_k - \sum_{j=1}^{i-1}\tilde{z}_{i\,j}\text{ for }
      1 \leq i \leq n - 1.
    \end{gathered}
  \end{equation}
  By construction, $\tp(z) = \tilde{z}$, so to establish that
  $\tilde{z} \in \pGT(\lambda, \mu)$, all we need to show is that
  $z \in \GT(\lambda, \mu)$.  Now, since all the equality constraints defining
  $\GT(\lambda, \mu)$ are satisfied by $z$ by construction, we only need to show
  that $\norm{z - x} \leq \delta$ to show that $z \in \GT(\lambda, \mu)$.  This
  follows by the following direct computation:
  \begin{equation}
    \begin{aligned}
      \norm{z-x}^2 - \norm{\tilde{z} - \tp(x)}^2 =
      \sum_{i = 1}^{n-1}\inp{\sum_{j=1}^{i-1}(\tilde{z}_{i\,j} - \tp(x)_{i\,j})}^2
      &\leq (n-2)\sum_{i = 1}^{n-1}\sum_{j=1}^{i-1}(\tilde{z}_{i\,j} -
        \tp(x)_{i\,j})^2\\
      &\leq (n-2)\norm{\tilde{z} - \tp(x)}^2.
    \end{aligned}\qedhere
  \end{equation}
\end{proof}

\begin{lemma}\label{lem:2.12}
  Let $r = r(\lambda, \mu) = \tau(\la, \mu){\gap}/4$ be as defined in
  \cref{lem:2.10}.  Then,
  \begin{equation}
    \label{eq:53}
    V_{\lambda, \mu}
    \geq
    \sqrt{(n-1)!} \cdot
    \inp{\frac{r}{\sqrt{n}}}^{\binom{n-1}{2}}\cdot b_{\binom{n-1}{2}},
  \end{equation}
  where $b_{k} = \frac{\pi^{\frac{k}{2}}}{\Gamma(\frac{k}{2} + 1)}$ denotes the
  volume of the Euclidean ball of unit radius in $\R^{k}$.  If
  $\mu' \in \SH(\lambda)$ is such that
  $\|\mu - \mu'\|_2 < \de < \frac{r}{2\sqrt{n-1}}$, then for $n \geq 3$
  \begin{equation}
    \label{eq:44}
    V_{\lambda, \mu' }
    \geq
    V_{\lambda,\mu}
    \left(1 - \frac{2\delta\sqrt{n-1}}{r}\right)^{\binom{n-1}{2}}.
  \end{equation}
\end{lemma}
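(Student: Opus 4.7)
The plan is to derive both parts from the large inscribed ball in $\GT(\lambda,\mu)$ supplied by \cref{lem:2.10}, combined with the hyperplane-motion bound of \cref{prop-hyperplane-move}. For \cref{eq:53}, I would first apply \cref{lem:2.10} to obtain an $\binom{n-1}{2}$-dimensional Euclidean ball $B$ of radius $r = r(\lambda,\mu)$ inside $\GT(\lambda,\mu)$, centred at some point $x_{\lambda\mu}$. Transferring this via \cref{prop-pgt-ball} gives a Euclidean ball of radius $r/\sqrt{n-1}$ in $\pGT(\lambda,\mu)$ centred at $c := \tp(x_{\lambda\mu})$, and therefore $\tV_{\lambda,\mu} \geq (r/\sqrt{n-1})^{\binom{n-1}{2}} b_{\binom{n-1}{2}}$; multiplying by $\sqrt{(n-1)!}$ via \cref{eq:vol-relation} and weakening $\sqrt{n-1}$ to $\sqrt{n}$ then yields \cref{eq:53}.

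For the stability estimate \cref{eq:44}, the plan is a standard homothety argument about the point $c$. Set $\eta := 2\sqrt{n-1}\,\delta/r$, which lies in $(0,1)$ by the hypothesis $\delta < r/(2\sqrt{n-1})$, and let $H_{\eta}(y) := c + (1-\eta)(y-c)$ be the contraction about $c$. Since $H_{\eta}$ has Jacobian determinant $(1-\eta)^{\binom{n-1}{2}}$ on $\R^{\binom{n-1}{2}}$, the bound claimed in \cref{eq:44} will follow at once from \cref{eq:vol-relation} as soon as I establish the containment $H_{\eta}(\pGT(\lambda,\mu)) \subseteq \pGT(\lambda,\mu')$.

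To prove that containment facet by facet, I would take any bounding halfspace $\{y : a \cdot y \leq b\}$ of $\pGT(\lambda,\mu)$ and let $d := (b - a\cdot c)/\norm{a}$ be the signed distance from $c$ to its boundary; since the ball of radius $\rho := r/\sqrt{n-1}$ about $c$ is contained in $\pGT(\lambda,\mu)$, we have $d \geq \rho$. The image under $H_{\eta}$ is the halfspace with boundary at signed distance $(1-\eta)d$ from $c$, while \cref{prop-hyperplane-move} (which, crucially, only translates the hyperplane parallel to itself since the constraint coefficients in \cref{eq:3,eq:5,eq:6,eq:7,eq:10} do not depend on $\mu$) places the boundary of the corresponding halfspace of $\pGT(\lambda,\mu')$ at signed distance at least $d - \tfrac{\sqrt{n-1}}{\sqrt{n-2}}\delta$ from $c$. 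The facet-inclusion thus reduces to $\eta d \geq \tfrac{\sqrt{n-1}}{\sqrt{n-2}}\delta$; using $d \geq \rho$ it suffices that $\eta \geq \tfrac{(n-1)\delta}{\sqrt{n-2}\, r}$, which the chosen value of $\eta$ satisfies for all $n \geq 3$ via the elementary inequality $\tfrac{n-1}{\sqrt{n-2}} \leq 2\sqrt{n-1}$ (equivalent to $n-1 \leq 4(n-2)$).

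The main technical point to track carefully is precisely this last piece of dimensional bookkeeping: the factor $\sqrt{n-1}/\sqrt{n-2}$ contributed by \cref{prop-hyperplane-move} and the factor $\sqrt{n-1}$ contributed by \cref{prop-pgt-ball} combine in just the right way for the constant $2$ in \cref{eq:44} to suffice precisely when $n \geq 3$, which is why the statement excludes $n = 2$. The same hypothesis $\delta < r/(2\sqrt{n-1})$ simultaneously guarantees $\frac{\sqrt{n-1}}{\sqrt{n-2}}\delta < d$ for every bounding hyperplane (using $d \geq \rho$ and the same $n \geq 3$ check), so no such hyperplane has shifted past $c$, confirming $c \in \pGT(\lambda,\mu')$ and validating the homothety argument.
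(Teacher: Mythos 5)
Your proof is correct and follows essentially the same route as the paper's: \cref{lem:2.10} plus \cref{prop-pgt-ball} and \cref{eq:vol-relation} for \cref{eq:53}, and a homothety about $c = \tp(x_{\lambda\mu})$ combined with \cref{prop-hyperplane-move} for \cref{eq:44}. The only cosmetic difference is that the paper rounds $\tfrac{\sqrt{n-1}}{\sqrt{n-2}}\delta \leq 2\delta$ for $n\geq 3$ up front, whereas you carry the exact factor through and verify $\tfrac{n-1}{\sqrt{n-2}} \leq 2\sqrt{n-1}$ at the end; the resulting bound is the same.
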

\begin{proof}
  \cref{lem:2.10} combined with \cref{prop-pgt-ball} shows that $\pGT(\la, \mu)$
  contains an ${\binom{n-1 }{2}}$ dimensional ball of radius $r/\sqrt{n-1}$
  centered at $\tp(x_{\la\,\mu})$ (where $x_{\lambda\,\mu}$ is as in
  \cref{lem:2.10}).  This gives (recalling the notation $\tV_{\lambda, \mu}$ for
  $\lvol{\pGT(\lambda, \mu)}$)
  \begin{equation}
    V_{\lambda, \mu}
    \overset{\text{\cref{eq:vol-relation}}}{=}
    \sqrt{(n-1)!} \cdot \tV_{\lambda, \mu}
    \geq
    \sqrt{(n-1)!} \cdot
    \inp{\frac{r}{\sqrt{n-1}}}^{\binom{n-1}{2}}\cdot b_{\binom{n-1}{2}},
  \end{equation}
  which proves \cref{eq:53}.  We now turn to the proof of \cref{eq:44}.  We saw
  above that $\pGT(\lambda,\mu)$ contains an Euclidean ball of radius
  $r/\sqrt{n-1}$ centered at $\tp(x_{\lambda\,\mu})$.  \Cref{prop-hyperplane-move}
  implies that each bounding hyperplane of $\pGT(\lambda, \mu')$ is obtained by
  translating a corresponding bounding hyperplane of $\pGT(\lambda, \mu)$ by a distance
  of at most $\frac{\sqrt{n-1}}{\sqrt{n-2}}{\delta} \leq 2\delta$ for
  $n \geq 3$. Since $\delta < \frac{r}{2\sqrt{n-1}}$, it thus follows that the image of
  $\pGT(\lambda, \mu)$ under a homothety by a factor of
  $1 - \frac{2\delta\sqrt{n-1}}{r}$ centered at $\tp(x_{\lambda, \mu})$ is contained in
  $\pGT(\lambda, \mu')$.  This shows that
  \begin{equation}
    \tV_{\lambda, \mu' }
    \geq \tV_{\lambda,\mu}\left(1 - \frac{2\delta\sqrt{n-1}}{r}\right)^{\binom{n- 1}{2}}.
  \end{equation}
  \Cref{eq:44} now follows from the scaling between $V_{\lambda,\mu}$ and
  $\tV_{\lambda, \mu}$ given in \cref{eq:vol-relation}.
\end{proof}
For future use, we record the following computation. Let
\begin{equation}
  \label{eq-delta-prime-def}
  \de' = \delta'(\lambda, \mu) := \frac{r(\lambda,\mu)}{4n^{3/2}} = \frac{\tau(\lambda,\mu)\gap}{16 n^{3/2}}.
\end{equation}
By \cref{eq:44} in Lemma~\ref{lem:2.12}, if $\|\mu - \mu'\|_2 < \de'$, then for $n \geq 3$
(using the inequality \(1 - x \geq \exp(-2x)\) valid for \(0 \leq x \leq 1/2\))
\begin{equation}
  \label{eq:vol-perturbed-upper}
  V_{\la, \mu'} \geq
    \left(1 - \frac{2\delta'\sqrt{n-1}}{r}\right)^{\binom{n-1}{2}}
    \cdot V_{\lambda,\mu}  \geq \exp\inp{\frac{-2\delta'n^{5/2}}{r}}\cdot V_{\lambda,\mu}
  = e^{-n/2} V_{\lambda, \mu}.
\end{equation}
Let $\tau(\lambda, \mu)$ be as defined in \cref{eq:tau-def}.  Recall that (from
\cref{eq-r0})
\begin{equation}
  r_0 = r_0(\lambda) \defeq
  \max\inb{\delta \st \abs{\lambda} = \abs{x} \text{ and }
    \norm[2]{\bar{\lambda}\vec{1} - x
    } \leq \delta \implies x \in \SH(\lambda)}.
\end{equation}

\begin{claim}
  \label{clm:shur-ball}
  With $\lambda$ and $\mu$ as above, $\pSH(\lambda)$ contains an Euclidean ball
  $B$ of radius $r_{0}(\lambda)\tau(\lambda, \mu)/\sqrt{n}$ centered at
  $p(\mu)$.  Further, if $\mu'$ is such that $p(\mu') \in B$ then
  \begin{equation}
    \label{eq:47}
    \norm[2]{\mu - \mu'} \leq \norm[2]{p(\mu) -p(\mu')} \cdot \sqrt{n} \leq r_{0}(\lambda)\tau(\lambda, \mu).
  \end{equation}
\end{claim}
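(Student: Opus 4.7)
The plan is to prove the two conclusions separately, starting with the second (the ``projection distortion'' inequality) since it will also be the key ingredient in establishing the first.

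First, I observe that any $\mu, \mu' \in \SH(\lambda)$ satisfy $\abs{\mu} = \abs{\mu'} = \abs{\lambda}$, so that $\mu_n - \mu'_n = \abs{p(\mu')} - \abs{p(\mu)} = \sum_{i=1}^{n-1}(p(\mu')_i - p(\mu)_i)$. Applying Cauchy--Schwarz gives $(\mu_n - \mu'_n)^2 \leq (n-1)\,\norm[2]{p(\mu) - p(\mu')}^2$, and hence
\begin{equation*}
\norm[2]{\mu - \mu'}^2 = \norm[2]{p(\mu) - p(\mu')}^2 + (\mu_n - \mu'_n)^2 \leq n \cdot \norm[2]{p(\mu) - p(\mu')}^2.
\end{equation*}
This gives the first inequality in \cref{eq:47}, and the second inequality in \cref{eq:47} will follow immediately once we know $p(\mu') \in B$ so that $\norm[2]{p(\mu) - p(\mu')} \leq r_0(\lambda)\tau(\lambda,\mu)/\sqrt{n}$.

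For the first claim, I need to show that whenever $\tilde{\nu} \in \R^{n-1}$ satisfies $\norm[2]{\tilde{\nu} - p(\mu)} \leq r_0(\lambda)\tau(\lambda, \mu)/\sqrt{n}$, the lifted point $\nu := p_{\lambda}^{-1}(\tilde{\nu})$ lies in $\SH(\lambda)$. By construction $\abs{\nu} = \abs{\lambda}$, and the distortion inequality above applied to $\mu$ and $\nu$ (both of which have entries summing to $\abs{\lambda}$) yields $\norm[2]{\nu - \mu} \leq r_0(\lambda)\tau(\lambda, \mu)$.

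Now I use the definition of $\tau(\lambda, \mu)$ in \cref{eq:tau-def}: the supremum is achieved (since $\SH(\lambda)$ is closed), so there exists $\nu^{*} \in \SH(\lambda)$ with $\mu = \tau(\lambda,\mu)\,\overline{\lambda}\one + (1-\tau(\lambda,\mu))\,\nu^{*}$. Setting $\tau = \tau(\lambda, \mu)$ for brevity, I write
\begin{equation*}
\nu = \mu + (\nu - \mu) = \tau\inp{\overline{\lambda}\one + \tfrac{1}{\tau}(\nu - \mu)} + (1-\tau)\,\nu^{*}.
\end{equation*}
The bracketed point $w := \overline{\lambda}\one + (\nu-\mu)/\tau$ satisfies $\abs{w} = \abs{\lambda}$ (because $\abs{\nu} = \abs{\mu}$) and $\norm[2]{\overline{\lambda}\one - w} = \norm[2]{\nu - \mu}/\tau \leq r_0(\lambda)$, so by the defining property \cref{eq-r0} of $r_0(\lambda)$ we get $w \in \SH(\lambda)$. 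Thus $\nu$ is a convex combination of the two points $w, \nu^{*} \in \SH(\lambda)$, so $\nu \in \SH(\lambda)$, and therefore $\tilde{\nu} = p(\nu) \in \pSH(\lambda)$, as required.

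The only mildly subtle step is the bookkeeping that ensures $w$ has entries summing to $\abs{\lambda}$ so that $r_0(\lambda)$ can be invoked; everything else is a direct application of the definitions and the one-step Cauchy--Schwarz bound.
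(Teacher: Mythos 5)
Your proof is correct and follows essentially the same route as the paper's: the Cauchy--Schwarz computation relating $\norm[2]{\mu - \mu'}$ to $\norm[2]{p(\mu) - p(\mu')}$ for vectors with equal entry-sums, combined with the $r_0$-ball around $\overline{\lambda}\one$ and a similar-triangles/convex-combination argument using $\tau(\lambda, \mu)$. Your organization is slightly cleaner: the paper does the Cauchy--Schwarz estimate once (in \cref{eq:40}) to place a ball around $p(\overline{\lambda}\one)$ in $\pSH(\lambda)$ and then invokes ``a calculation formally identical to the one in \cref{eq:40}'' to get \cref{eq:47}, whereas you isolate the projection-distortion inequality up front as a standalone lemma and reuse it for both halves; you also run the convexity argument in $\SH(\lambda) \subset \R^n$ (showing the lift $w$ lands in $\SH(\lambda)$) rather than in $\pSH(\lambda) \subset \R^{n-1}$. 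The mathematical content is identical either way.
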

\begin{proof}
  Consider the Euclidean ball $B'$ of radius $r_0(\lambda)/\sqrt{n}$ centered at
  $p(\bar{\lambda}\vec{1}) \in \pSH(\lambda)$.  Consider a point
  $\eta' = (\eta'_1, \eta_2', \dots, \eta_{n-1}') \in B'$ and define
  $\eta \defeq (\eta'_1, \eta_2', \dots, \eta_{n-1}' , \abs{\lambda} -
  \abs{\eta'})$.  We then have $\abs{\eta} = \abs{\lambda}$, and
  \begin{equation}
    \label{eq:40}
    \begin{aligned}[c]
      \norm[2]{\eta - \bar{\lambda}\vec{1}}^2
      &= \norm[2]{\eta' - p(\bar{\lambda}\vec{1})}^2
        + \inp{\vec{1}^T(\eta' - p(\bar{\lambda}\vec{1}))}^2\\
      &\leq \frac{r_{0}(\lambda)^2}{n} + (n-1)\frac{r_{0}(\lambda)^2}{n} \leq r_0(\lambda)^2,
    \end{aligned}
  \end{equation}
  so that, by the definition of $r_0(\lambda)$, we have $\eta \in \SH(\lambda)$, which implies
  that $p(\eta) = \eta' \in \pSH(\lambda)$.  Thus, the Euclidean ball $B'$ of radius
  $r_0(\lambda)/\sqrt{n}$ centered at $p(\bar{\lambda}\vec{1})$ lies in $\pSH(\lambda)$.

  By the definition of $\tau(\lambda, \mu)$ in \cref{eq:tau-def}, there exists
  $\nu \in \SH(\lambda)$ such that $\mu$ is a convex combination of $\nu$ and
  $\bar{\lambda}\vec{1}$, given by
  $\mu = \tau(\lambda, \mu)\bar{\lambda}\vec{1} + (1-\tau(\lambda, \mu))\nu$.
  This also implies that $p(\mu) = \tau(\lambda, \mu)p(\bar{\lambda}\vec{1}) + (1-\tau(\lambda,
  \mu))p(\nu)$, with $p(\bar{\lambda}\vec{1}), p(\nu) \in \pSH(\lambda)$.  The
  claim now follows because of the convexity of $\pSH(\lambda)$, and since the
  ball $B'$ above lies in $\pSH(\lambda)$.  More explicitly, any point
  $\tilde{\mu}$ satisfying
  $\norm[2]{\tilde{\mu} - p(\mu)} \leq r_{0}(\lambda)\tau(\lambda, \mu)/\sqrt{n}$ can
  be written as the convex combination
  \begin{equation}
    \label{eq:39}
    \tilde\mu = \tau(\lambda, \mu)\tilde\lambda + (1-\tau(\lambda, \mu))p(\nu),
  \end{equation}
  where $\lambda'$ satisfies
  $\norm[2]{\tilde\mu - p(\mu)} = \tau(\lambda, \mu)\norm[2]{\tilde\lambda - p(\bar{\lambda}\vec{1})}$, so
  that $\tilde\lambda \in B' \subseteq \pSH(\lambda)$.  That \(\tilde\mu\) is in
  \(\pSH(\lambda)\) then follows from \cref{eq:39} since $\pSH(\lambda)$ is a convex set.
  This shows that the ball $B$ as defined in the claim is contained in
  $\pSH(\lambda)$.  Let $\mu'$ be such that $p(\mu') = \tilde\mu$.  The claimed upper bound
  on $\norm[2]{\mu' - \mu}$ in \cref{eq:47} then follows from the definition of
  $B$ and a calculation formally identical to the one in \cref{eq:40}.
\end{proof}

\begin{proposition}\label{prop:vol-upper-estimate}
  Let $\tau(\lambda, \mu)$ and $r_0(\lambda)$ be as defined in
  \cref{eq:tau-def,clm:shur-ball} respectively. Let
  \begin{equation}
    \bdrad(\lambda, \mu) = \frac{\tau(\lambda, \mu)}{\sqrt{n}}
    \cdot \min\inb{
      {r_0(\lambda)},
      \frac{\gap}{16n^{3/2}}
    },
  \end{equation}
  be as defined in \cref{eq-bdrad-def} and let $b_{n-1}$ denote the volume of
  the Euclidean ball of unit radius in $\R^{n-1}$.  Then, we have
  \begin{equation}
V_{\lambda,\mu} \leq \frac{\sqrt{(n-1)!}
      \cdot
      \exp\inp{
        n/2
      }
      \cdot
      S_{\lambda}(x)
    }{
      \exp(x\cdot\mu)\cdot b_{n-1}\cdot \bdrad(\lambda, \mu)^{n-1}
    }
  \end{equation}
  for all $x \in \R^n$.
\end{proposition}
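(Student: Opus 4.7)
My plan is to start from the integral representation of $S_\lambda(x)$ from \Cref{lem:geom-quant}, namely
\begin{equation*}
S_\lambda(x) \exp(-x\cdot\mu) = \int_{\nu \in \SH(\lambda)} \tilde{V}_{\lambda,\nu}\,\exp\inp{x\cdot(\nu-\mu)} \,\prod_{i=1}^{n-1} d\nu_i,
\end{equation*}
and lower bound the right hand side by restricting the integration to a carefully chosen Euclidean ball around $p(\mu)$ inside $\pSH(\lambda)$. Specifically, I would take $B$ to be the ball of radius $\bdrad(\lambda,\mu)$ centered at $p(\mu)$ in $\mathbb{R}^{n-1}$. By the definition of $\bdrad$ in \cref{eq-bdrad-def}, we have $\bdrad(\lambda,\mu) \leq r_0(\lambda)\tau(\lambda,\mu)/\sqrt{n}$, so \cref{clm:shur-ball} guarantees that $B \subseteq \pSH(\lambda)$.

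For any $\nu$ with $p(\nu)\in B$, the bound in \cref{clm:shur-ball} yields $\|\nu - \mu\| \leq \bdrad(\lambda,\mu)\sqrt{n}$, and the second term in the minimum in \cref{eq-bdrad-def} ensures this quantity is at most $\tau(\lambda,\mu)\gap/(16n^{3/2}) = \delta'(\lambda,\mu)$. Hence \cref{eq:vol-perturbed-upper} applies, giving $V_{\lambda,\nu} \geq e^{-n/2}V_{\lambda,\mu}$, and the same inequality holds for $\tilde V$ by \cref{eq:vol-relation}. This lets me pull a uniform lower bound on $\tilde V_{\lambda,\nu}$ out of the integral restricted to $B$.

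The remaining integral is $\int_{p(\nu)\in B}\exp\inp{x\cdot(\nu-\mu)}\,\prod_{i=1}^{n-1} d\nu_i$, and the key observation is that after the change of variables $u = p(\nu) - p(\mu)$ (which has Jacobian $1$), the integrand becomes $\exp(q(x)\cdot u)$ using the identity $x\cdot(\nu-\mu) = q(x)\cdot(p(\nu)-p(\mu))$ from \Cref{ob:project}, integrated over the ball $\{\|u\|\leq \bdrad(\lambda,\mu)\}$ in $\mathbb{R}^{n-1}$. Because this ball is symmetric under $u \mapsto -u$, I can symmetrize to obtain
\begin{equation*}
\int_{\|u\|\leq \bdrad}\exp(q(x)\cdot u)\,du = \int_{\|u\|\leq \bdrad}\cosh(q(x)\cdot u)\,du \geq \int_{\|u\|\leq \bdrad} 1\,du = b_{n-1}\bdrad(\lambda,\mu)^{n-1},
\end{equation*}
since $\cosh \geq 1$. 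This uniform lower bound holds for every $x \in \R^n$, which is precisely what is needed.

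Putting the three ingredients together gives
\begin{equation*}
S_\lambda(x)\exp(-x\cdot \mu) \geq e^{-n/2}\,\tilde V_{\lambda,\mu}\,b_{n-1}\,\bdrad(\lambda,\mu)^{n-1} = \frac{e^{-n/2}\,V_{\lambda,\mu}\,b_{n-1}\,\bdrad(\lambda,\mu)^{n-1}}{\sqrt{(n-1)!}},
\end{equation*}
which rearranges to the desired inequality. The main obstacle is conceptual rather than computational: one needs the $x$-independent lower bound on $\int_B \exp(x\cdot(\nu-\mu))\,d\nu$, and the trick is to exploit the symmetry of the ball around $p(\mu)$ to replace the exponential by a hyperbolic cosine; everything else is bookkeeping with the previously established geometric lemmas.
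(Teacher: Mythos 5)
Your proof is correct and follows essentially the same route as the paper: restrict the integral representation of $S_\lambda(x)$ (from \Cref{lem:geom-quant}) to the ball $B$ of radius $\bdrad(\lambda,\mu)$ around $p(\mu)$, pull out the uniform lower bound $V_{\lambda,\nu}\geq e^{-n/2}V_{\lambda,\mu}$ coming from \cref{clm:shur-ball} and \cref{eq:vol-perturbed-upper}, and lower bound the remaining exponential integral by $b_{n-1}\bdrad^{n-1}\exp(x\cdot\mu)$ using central symmetry of the ball. Your $\cosh$-symmetrization is the same step the paper performs via the reflection $\nu\mapsto 2\mu-\nu$ and convexity of $\exp$; they are identical arguments phrased slightly differently.
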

\begin{remark}
  Recall from the discussion following \cref{eq-bdrad-def} that
  \(\bdrad(\lambda, \mu)\) is positive whenever \(\lambda\) and \(\mu\) are such
  that the volume \(V_{\lambda, \mu}\) is non-zero.
\end{remark}
\begin{proof}
  Let $B'$ denote the Euclidean ball of radius $\bdrad(\lambda,\mu)$ in
  $\R^{n-1}$, centered at $p(\mu) \in \pSH(\lambda)$.  The definition of
  $\bdrad(\lambda,\mu)$ and \cref{clm:shur-ball} then imply that
  $B' \subseteq B \subseteq \pSH(\lambda)$ (where $B$ is as defined in
  \cref{clm:shur-ball}).  Again, the definition of $\bdrad(\lambda, \mu)$ and
  \cref{clm:shur-ball} (cf.~\cref{eq:47}) then imply that if $\nu$ is such that
  $p(\nu) \in B'$, then
  \begin{equation}
    \norm[2]{\mu - \nu} \leq \sqrt{n}\cdot \bdrad(\lambda, \mu) \leq
    \delta'(\lambda, \mu),
  \end{equation}
  where $\delta'$ is as defined in \cref{eq-delta-prime-def}.  \Cref{lem:2.10}
  (cf.~\cref{eq:vol-perturbed-upper}) then implies that for any $\nu$ for which
  $p(\nu) \in B'$,
  \begin{equation}
    \label{eq:49}
    V_{\lambda, \nu} \geq  e^{-n/2} V_{\lambda, \mu}.
  \end{equation}
  Substituting these estimates in \cref{eq:geom-quant}, we get
  \begin{align}
    \sqrt{(n-1)!}\cdot S_{\lambda}(x)
    &= \int\limits_{\nu \in \SH(\lambda)}
      V_{\la\nu} \exp(x \cdot \nu)
      \quad
      \prod\limits_{1 \leq i \leq n - 1} d\nu_{i}\\
    &\geq \int\limits_{\nu \in \SH(\lambda) \st p(\nu) \in B'}
      V_{\la\nu} \exp(x \cdot \nu)
      \quad
      \prod\limits_{1 \leq i \leq n - 1} d\nu_{i}\\
    &\overset{\textup{\cref{eq:49}}}{\geq}
      e^{-n/2} V_{\lambda, \mu}
      \cdot \int\limits_{\nu \in \SH(\lambda) \st p(\nu) \in B'}
      \exp(x \cdot \nu)
      \prod\limits_{1 \leq i \leq n - 1} d\nu_{i}.\label{eq:46}
  \end{align}
  Since $B'$ is a ball centered at $p(\mu)$, it is centrally symmetric around
  $p(\mu)$.  The change of variables
  $(\nu_1, \nu_{2}, \dots, \nu_{n-1}) \mapsto 2p(\mu) - (\nu_1, \nu_{2}, \dots,
  \nu_{n-1})$ thus gives
  \begin{align}
    \int\limits_{\nu \in \SH(\lambda) \st p(\nu) \in B'}
    \exp(x \cdot \nu)
    \prod\limits_{1 \leq i \leq n - 1} d\nu_{i}
    &= \int\limits_{\nu \in \SH(\lambda) \st p(\nu) \in B'}
      \exp(x \cdot (2\mu - \nu))
      \prod\limits_{1 \leq i \leq n - 1} d\nu_{i}\\
    &= \int\limits_{\nu \in \SH(\lambda) \st p(\nu) \in B'}
      \frac{1}{2}\insq{
      \exp(x \cdot \nu)
      + \exp(x \cdot (2\mu - \nu))
      }
      \prod\limits_{1 \leq i \leq n - 1} d\nu_{i}\\
    &\geq \exp(x\cdot \mu) \cdot \lvol{B'},
  \end{align}
  where the last inequality follows since the exponential is a convex function.
  Substituting this back into \cref{eq:46}, noting that
  $\lvol{B'} = \bdrad(\lambda, \mu)^{n-1}\cdot b_{n-1}$, and rearranging, we get
  the claim.
\end{proof}

\section{Main result and computational aspects}
\label{sec:main-result-comp}
We now combine the upper and lower and upper bounds on the volumes of Kostka
polytopes obtained in \cref{sec-cont-anal-schur,sec-geom-pro} to complete the
proof of our main result (\cref{thm:main}).  Then, in \cref{sec:comp-aspects},
we describe how to efficiently compute the estimate that \cref{thm:main} yields.

From the discussion following \cref{eq-bdrad-def}, it follows that if
\(\lambda\) does not have distinct parts or if \(\mu\) is not in the relative
interior of \(SH(\lambda)\) then \(\lvol{\GT(\lambda, \mu)} = 0\). We thus state
the result only for the case when \(\lambda\) has distinct parts and \(\mu\) is
in the relative interior of \(\SH(\lambda)\), and in this case
\(\bdrad(\lambda, \mu)\) is positive.  Thus, fix a partition \(\lambda\) with
\(n\) distinct parts, and a vector \(\mu\) in the relative interior of
\(SH(\lambda)\).  Recall from the statement of
Proposition~\ref{prop:correct-mean} the definition
\begin{displaymath}
    x^{*} \in \mathop{\arg\min}\limits_{x \in \R^n} g_{\lambda,\mu}(x).
  \end{displaymath}
Pick $\bdrad(\lambda, \mu)$ as in \cref{prop:vol-upper-estimate}, and consider the
estimate
\begin{equation}
  F(\lambda, \mu) \defeq
\frac{\sqrt{(n-1)!}
      \cdot
      \exp\inp{
        n/2
      }
      \cdot
      S_{\lambda}(x^{*})
    }{
      \exp(x^{*}\cdot\mu)\cdot b_{n-1}\cdot \bdrad(\lambda, \mu)^{n-1}
    }.
\end{equation}
\Cref{prop:correct-mean,prop:vol-upper-estimate} then show that
\begin{equation}
  1
  \leq
  \frac{F(\lambda, \mu)}{V_{\lambda, \mu}}
  \leq
  \frac{
    \exp\inp{
      3n/2
    }
    \cdot
    \lvol{\pSH(\lambda)}
  }{
    b_{n-1}\cdot \bdrad(\lambda, \mu)^{n-1}
  }.
\end{equation}
Denoting
$g^{*}_{\lambda, \mu} \defeq \mathop{\inf}\limits_{y \in
  \R^{n-1}}\tg_{\lambda,\mu}(y)$, \cref{eq:62} in \cref{ob:project} thus gives us the following:
\begin{equation}
  \label{eq:opt-approx}
  \frac{\sqrt{(n-1)!}  \exp(-n)}{\lvol{\pSH(\la)}}
  \leq
  \frac{V_{\la\mu}}{\exp\inp{g^{*}_{\lambda,\mu}}}
  \leq
  \frac{\sqrt{(n-1)!}
    \exp(n/2){\Gamma(\frac{n+1}{2})}}{{\pi^{\frac{n-1}{2}}}\bdrad(\la, \mu)^{n-1}}.
\end{equation}

The above equation is the main result of this paper, which we encapsulate into
the following theorem.

\begin{theorem}\label{thm:main}
  Fix a partition \(\lambda\) with \(n\) distinct parts, and a vector \(\mu\) in
  the relative interior of \(SH(\lambda)\).  Let
  $S_\la(x) = \frac{\det[\exp(x_i\la_j)]_{1\leq i,j \leq n}}{V(x)}, $ where
  $V(x)$ is the Vandermonde determinant. Then,
$$ \frac{\sqrt{(n-1)!} \cdot  \exp(-n)}{\lvol{\pSH(\la)}}
\leq \frac{\lvol{\GT(\la, \mu)}}{\inf_{x \in \R^n} S_{\lambda}(x) \exp(- x^T\mu)}
\leq \frac{\sqrt{(n-1)!}  \cdot \exp(n/2) \cdot \Gamma\inp{\frac{n+1}{2}}}{\inp{\sqrt{\pi} \cdot \bdrad(\la, \mu)}^{n-1} }.$$
\end{theorem}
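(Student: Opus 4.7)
The plan is to derive both bounds essentially by rearranging the two key propositions already established in \cref{sec-cont-anal-schur,sec-geom-pro}, with the quantity $\inf_{x \in \R^n} S_\lambda(x) \exp(-x^T\mu)$ serving as the common pivot. The first observation is that since $g_{\lambda,\mu}(x) = \log S_\lambda(x) - x^T\mu$ (\cref{eq:38}), we have
\begin{equation*}
  \inf_{x \in \R^n} S_\lambda(x) \exp(-x^T\mu) = \exp\inp{\inf_{x \in \R^n} g_{\lambda,\mu}(x)} = \exp(g^{*}_{\lambda,\mu}),
\end{equation*}
and, by \cref{prop:log-convex-cont-schur} together with the coercivity established (via \cref{lem:g-lower-bound} and \cref{ob:project}) in the proof of \cref{prop:correct-mean}, this infimum is attained at a finite point $x^{*} \in \R^n$.

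For the left-hand inequality, I would simply apply part~\eqref{item:lower-bound} of \cref{prop:correct-mean} at this $x^{*}$, which gives
\begin{equation*}
  V_{\lambda, \mu} \ \geq\ \frac{\sqrt{(n-1)!} \cdot S_\lambda(x^{*})}{\exp(n + x^{*}\cdot \mu) \cdot \vol{\pSH(\lambda)}}
  \ =\ \frac{\sqrt{(n-1)!} \cdot \exp(-n)}{\vol{\pSH(\lambda)}} \cdot \inf_{x \in \R^n} S_\lambda(x)\exp(-x^T\mu),
\end{equation*}
and dividing across yields the claimed lower bound.

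For the right-hand inequality, \cref{prop:vol-upper-estimate} supplies the bound
\begin{equation*}
  V_{\lambda,\mu} \ \leq\ \frac{\sqrt{(n-1)!} \cdot \exp(n/2) \cdot S_\lambda(x)\exp(-x\cdot \mu)}{b_{n-1}\cdot \bdrad(\lambda,\mu)^{n-1}}
\end{equation*}
valid uniformly in $x \in \R^n$. Taking the infimum of the right-hand side over $x$ and substituting $b_{n-1} = \pi^{(n-1)/2}/\Gamma\inp{\tfrac{n+1}{2}}$ so that $b_{n-1}\cdot \bdrad(\lambda,\mu)^{n-1} = \inp{\sqrt{\pi}\cdot\bdrad(\lambda,\mu)}^{n-1}/\Gamma\inp{\tfrac{n+1}{2}}$ gives exactly the stated upper bound after rearrangement.

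There is no real technical obstacle at this final step: all of the substantive work (the Gibbsian interpretation of $S_\lambda$, the log-concavity of $V_{\lambda,\nu}$ in $\nu$, Fradelizi's inequality, and the geometric analysis producing the inscribed ball of radius $\bdrad(\lambda,\mu)$) is already packaged into \cref{prop:correct-mean,prop:vol-upper-estimate}. The only point that warrants a moment of care is ensuring that the lower bound statement in \cref{prop:correct-mean}, which is phrased for the specific minimizer $x^{*}$, can be legitimately re-expressed in terms of the infimum; this is immediate from the definitions, but it is worth recording that the two sides of the theorem correspond to evaluating/bounding the same quantity $\exp(g^{*}_{\lambda,\mu})$, so that the ratio of the upper to lower bound is what controls the final approximation factor.
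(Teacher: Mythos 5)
Your proposal is correct and matches the paper's own derivation essentially step for step: both apply \cref{prop:correct-mean}\eqref{item:lower-bound} at the minimizer $x^{*}$ for the lower bound, apply \cref{prop:vol-upper-estimate} uniformly in $x$ and pass to the infimum for the upper bound, and substitute $b_{n-1} = \pi^{(n-1)/2}/\Gamma\inp{\frac{n+1}{2}}$ to obtain the stated form. You have also correctly flagged the one point the paper handles implicitly via \cref{ob:project} and \cref{lem:g-lower-bound}, namely that the infimum of $g_{\lambda,\mu}$ is finite and attained.
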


\begin{remark}
  We derive here the guarantees claimed in \cref{eq-main-result} in the
  introduction.  Recall that in that setting, \(\lambda\) is an integral partition
  with \(n\) parts all whose entries are distinct, and \(\mu\) is an integer
  vector in the interior of \(\SH(\lambda)\).  In this setting, \cref{eq-bdrad-bound}
  shows that
  \( \bdrad(\lambda, \mu) \geq \frac{\gap}{16 \lambda_1 n^3} \geq \frac{1}{16 \lambda_1 n^3}.\) Further,
  \cref{prop-vol-psh} shows that
  \(\lvol{\pSH(\lambda)} \leq \lambda_1^{n-1}n^{2n}\).  Substituting these bounds in
  \cref{thm:main} gives the claim in \cref{eq-main-result}.
\end{remark}

\subsection{Computational aspects}
\label{sec:comp-aspects}
Given \cref{eq:opt-approx}, we need to minimize the convex function
$\tg_{\lambda, \mu}$ (see \cref{ob:project}) over $\R^{n-1}$.  To do so, we use the fact
that efficient approximate minimization of a convex function over a convex body
can be carried out with access to a weak membership oracle for the body: this is
Theorem 4.3.13 of \cite{GLSbook} (which, in turn, is based on results of Yudin
and Nemirovskii~\cite{YN76}), quoted as \cref{thm-gls-optimizer} below.  It
turns out that the two main ingredients required in order to apply this result
to our setting are (1) a restriction of the domain of optimization to a compact
convex set (carried out in \cref{prop:optimizer-setup}), and (2) designing an
efficient evaluation oracle for \(\tg_{\lambda, \mu}\) (carried out in
\cref{prop-compute-g}) .  We proceed to do this in this section.

In what follows, we will present the analysis of the algorithm  under the
following assumptions as stated in the abstract and the introduction:
\begin{equation}
  \begin{gathered}
    \text{\(\lambda\) is an \emph{integral} partition with \(n\) parts of distinct sizes},\\
    \text{the  entries of \({\lambda}\) are bounded above by a
    polynomial in \(n\), and,}\\
  \text{ \(\mu\) is an \emph{integer} point in the interior of \(\SH(\lambda)\).}
  \end{gathered}
\end{equation}
This is the same setting as was considered above in the context of bounding the
condition parameter \(\bdrad(\lambda, \mu)\) (in \cref{eq-bdrad-bound}).  We note,
however, that several of the technical results (including the results in
\cref{prop:optimizer-setup,prop-compute-g} mentioned above) in this section are
applicable more generally for rational $\lambda$ and $\mu$, and we specialize to the
above setting of integral values for \(\lambda\) and \(\mu\) only in
\cref{rem:R} and in the paragraphs following the proof of \cref{prop-compute-g}.  In
particular, using the results of this section, the analysis can be
extended to the general case of \(\lambda\) being any partition with distinct rational entries
and \(\mu\) an arbitrary rational vector in the relative interior of  \(\SH(\lambda)\), but the runtime bounds
would then have a more involved description, that would, in particular, have a dependence on the condition parameter \(\bdrad(\lambda, \mu)\), which would not necessarily be polynomial in $n$.

We begin with the following upper bound on $\inf\limits_{y \in \R^{n-1}}\tg(y)$.
\begin{observation}\label{ob:g-at-0}
  Let $\lambda, \mu \in \R^n$ be as in the statement of
  \cref{thm:main}.  Then
  \begin{equation}
  \label{eq-log-gt-vol-estimate}
  \tg_{\lambda,\mu}(\vec{0}) = \log \lvol{\tpGT(\lambda)}
  = \log \inp{
    \prod_{1 \leq i < j \leq n}
    \frac{\lambda_i - \lambda_j}{j - i}
  } \leq \binom{n}{2}\log \lambda_1.
  \end{equation}
\end{observation}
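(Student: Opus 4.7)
The plan is to evaluate $\tg_{\lambda,\mu}$ at the origin, reduce it to $S_{\lambda}(\vec{0})$, and then invoke the known Weyl-type volume formula for the Gelfand--Tsetlin polytope recorded in \cref{eq:gt-volume}.

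First, I would observe that the map $q: \R^n \to \R^{n-1}$ from \cref{ob:project} satisfies $q(\vec{0}) = \vec{0}$. Therefore the identity $\tg_{\lambda,\mu}(q(x)) = g_{\lambda,\mu}(x)$ (\cref{eq-g-hat-to-g}) evaluated at $x = \vec{0}$ yields
\begin{equation*}
  \tg_{\lambda,\mu}(\vec{0}) \;=\; g_{\lambda,\mu}(\vec{0}) \;=\; \log S_{\lambda}(\vec{0}) - \vec{0}\cdot\mu \;=\; \log S_{\lambda}(\vec{0}).
\end{equation*}

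Next, I would evaluate $S_{\lambda}(\vec{0})$ directly from its integral definition in \cref{eq:tao-S-def}. When $x = \vec{0}$, the exponential weight reduces to $1$, so the integral collapses to the Lebesgue measure of the domain, which by construction is precisely $\vol{\tpGT(\lambda)}$. Applying the formula recorded in \cref{eq:gt-volume} then gives
\begin{equation*}
  \tg_{\lambda,\mu}(\vec{0}) \;=\; \log \vol{\tpGT(\lambda)} \;=\; \log \prod_{1 \leq i < j \leq n} \frac{\lambda_i - \lambda_j}{j - i},
\end{equation*}
which establishes the equality part of the claim.

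Finally, for the inequality, I would bound each factor of the product individually. Since $\lambda$ is a non-increasing tuple of non-negative reals, $0 \leq \lambda_j \leq \lambda_i \leq \lambda_1$ for all $i < j$, so $\lambda_i - \lambda_j \leq \lambda_1$, while the denominator $j - i \geq 1$. Hence each of the $\binom{n}{2}$ factors is at most $\lambda_1$, and taking logarithms yields the stated upper bound $\binom{n}{2}\log\lambda_1$. There is no real obstacle here: the whole statement is a direct unpacking of definitions followed by a term-by-term estimate, and its role is only to provide an easily computable \emph{a priori} upper bound on the value of the convex minimization problem that will be fed into the optimization subroutine later in the section.
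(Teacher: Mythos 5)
Your proof is correct and follows essentially the same route as the paper's: reduce $\tg_{\lambda,\mu}(\vec{0})$ to $g_{\lambda,\mu}(\vec{0}) = \log S_{\lambda}(\vec{0})$, observe from \cref{eq:tao-S-def} that $S_{\lambda}(\vec{0}) = \vol{\tpGT(\lambda)}$, apply \cref{eq:gt-volume}, and then bound each of the $\binom{n}{2}$ factors by $\lambda_1$. The only cosmetic difference is that you invoke \cref{eq-g-hat-to-g} with $q(\vec{0})=\vec{0}$ where the paper simply cites the definition of $\tg_{\lambda,\mu}$; these are the same observation.
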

\begin{proof}
  By definition of $\tg_{\lambda, \mu}$,
  $\tg_{\lambda, \mu}(\vec{0}) = g_{\lambda, \mu}(\vec{0})$.  For the latter, we
  use its definition (\cref{eq:38}), followed by \cref{eq:tao-S-def} and then
  \cref{eq:gt-volume} to get the above claim.
\end{proof}
We now prove a lower bound on the value of $\hat{g}_{\lambda, \mu}(y)$ for $y$
far from the origin.
\begin{lemma}
  \label{lem:g-lower-bound}
  There exists an absolute constant $C$ such that the following is true for all
  $n \geq 4$.  Let $\lambda, \mu \in \R^n$ be as in the statement of
  \cref{thm:main}.  Let
  $\bdrad(\lambda, \mu)$ be as defined in
  \cref{eq-bdrad-def}.  Then, for all $y \in \R^{n-1}$ such
  that
  $\norm{y} \geq \frac{Cn^3}{\bdrad(\lambda, \mu)}\max\inb{1,
    \log\frac{1}{\bdrad(\lambda,\mu)}}$, it holds that
  \begin{equation}
    \tg_{\lambda, \mu}(y) \geq \frac{\bdrad(\lambda, \mu) \cdot \norm{y}}{\sqrt{n-1}}.
  \end{equation}
\end{lemma}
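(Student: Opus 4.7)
The plan is to lower-bound the integral defining $\tg_{\lambda, \mu}(y)$ (from \cref{ob:project}) by restricting the domain of integration to a spherical cap of the ball $B' \subseteq \pSH(\lambda)$ provided by \cref{clm:shur-ball}, where the cap is aligned with the direction of $y$. Inside $B'$ one already has a good lower bound on the Kostka volume factor $V_{\lambda, p_\lambda^{-1}(\tilde\nu)}$ via \cref{eq:vol-perturbed-upper}, and on the cap one additionally has a good lower bound on the exponential factor $\exp(y \cdot (\tilde\nu - p(\mu)))$.

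More concretely, for any $\tilde\nu \in B'$, \cref{eq:47} combined with the bound $\sqrt{n}\,\bdrad(\lambda,\mu) \leq \delta'(\lambda, \mu)$ (which follows immediately from the definitions in \cref{eq-bdrad-def,eq-delta-prime-def}) gives $\norm{p_\lambda^{-1}(\tilde\nu) - \mu} \leq \delta'(\lambda, \mu)$, so that \cref{eq:vol-perturbed-upper} yields $V_{\lambda, p_\lambda^{-1}(\tilde\nu)} \geq e^{-n/2}\,V_{\lambda, \mu}$ uniformly on $B'$. Next, let $C \defeq \inb{\tilde\nu \in B' : (\tilde\nu - p(\mu))\cdot y/\norm{y} \geq \tfrac{3}{4}\bdrad(\lambda, \mu)}$ be the spherical cap of $B'$ in the direction of $y$. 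A standard spherical-cap volume computation in $\R^{n-1}$ shows that $\log\vol{C} \geq (n-1)\log\bdrad(\lambda,\mu) - O(n\log n)$, while on $C$ the exponential factor is at least $\exp(\tfrac{3}{4}\bdrad(\lambda, \mu)\norm{y})$. Restricting the integral in \cref{ob:project} to $C$ and taking logarithms therefore yields
\[
\tg_{\lambda,\mu}(y) \;\geq\; \tfrac{3}{4}\bdrad(\lambda, \mu)\norm{y} \;-\; \tfrac{n}{2} \;+\; \log V_{\lambda, \mu} \;+\; \log\vol{C}.
\]

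To close the argument, we lower-bound $V_{\lambda, \mu}$ by invoking \cref{lem:2.12}: the definition of $\bdrad$ in \cref{eq-bdrad-def} implies $r(\lambda, \mu) \geq 4n^{2}\bdrad(\lambda, \mu)$, and so \cref{eq:53} gives $V_{\lambda, \mu} \geq \sqrt{(n-1)!}\,(4 n^{3/2}\bdrad(\lambda, \mu))^{\binom{n-1}{2}}\,b_{\binom{n-1}{2}}$, which after Stirling-type estimates yields $-\log V_{\lambda, \mu} \leq \binom{n-1}{2}\log(1/\bdrad(\lambda, \mu)) + O(n^2\log n)$. Plugging both volume estimates in and rearranging,
\[
\tg_{\lambda,\mu}(y) - \frac{\bdrad(\lambda,\mu)\norm{y}}{\sqrt{n-1}} \;\geq\; \Bigl(\tfrac{3}{4} - \tfrac{1}{\sqrt{n-1}}\Bigr)\bdrad(\lambda, \mu)\norm{y} \;-\; \tbinom{n}{2}\log\tfrac{1}{\bdrad(\lambda, \mu)} \;-\; O(n^2\log n).
\]
For $n \geq 4$ the coefficient $\tfrac{3}{4} - \tfrac{1}{\sqrt{n-1}} \geq \tfrac{3}{4} - \tfrac{1}{\sqrt{3}}$ is bounded below by an absolute positive constant, and so the right-hand side is non-negative whenever $\norm{y} \geq C n^3\max\inb{1, \log(1/\bdrad(\lambda, \mu))}/\bdrad(\lambda, \mu)$ for a sufficiently large absolute constant $C$, which is precisely the hypothesis of the lemma.

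The only real difficulty is bookkeeping: one must choose the cap height (here $\tfrac{1}{4}$ of the ball radius) large enough that the linear gain $\tfrac{3}{4}\bdrad\norm{y}$ strictly dominates the target coefficient $\bdrad\norm{y}/\sqrt{n-1}$ by an absolute constant for every $n \geq 4$, yet the residual estimate $\log\vol{C} \gtrsim (n-1)\log\bdrad$ must combine cleanly with the $\binom{n-1}{2}\log\bdrad$ term from \cref{lem:2.12} into $\binom{n}{2}\log(1/\bdrad)$. Once the cap height is fixed, the remaining work is routine Stirling bookkeeping on $(n-1)!$ and on $b_{\binom{n-1}{2}}$, and the $\Omega(n^3\log(1/\bdrad))$ slack supplied by the hypothesis comfortably absorbs every residual $O(n^2\log n)$ and $\binom{n}{2}\log(1/\bdrad)$ term.
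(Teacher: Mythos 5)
Your proof is correct and follows essentially the same route as the paper's: restrict the integral defining $\tg_{\lambda,\mu}$ to the ball $B'$ of radius $\bdrad(\lambda,\mu)$ around $p(\mu)$, use \cref{eq:vol-perturbed-upper} to lower-bound $V_{\lambda, p_\lambda^{-1}(\tilde\nu)}$ uniformly there by $e^{-n/2}V_{\lambda,\mu}$, lower-bound $V_{\lambda,\mu}$ by \cref{eq:53}, and then estimate the residual exponential integral over $B'$. The only difference is that where the paper evaluates that exponential integral via the exact one-dimensional reduction of \cref{prop:ball-integral}, you further restrict to a spherical cap aligned with $y$ and bound the cap volume and the exponential factor separately; both deliver a lower bound of the form $\Omega(\bdrad\norm{y}/\sqrt{n})$ and the same bookkeeping closes the argument.
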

\begin{proof}
  Define $y' = (y_1, y_2, \dots, y_{n-1}, 0) \in \R^n$, and denote
  $\tilde{\mu} = p(\mu) \in \pSH(\lambda) \subseteq \R^{n-1}$.  Then
  $\tg_{\lambda,\mu}(y) = g_{\lambda,\mu}(y') = \log (S_{\lambda}(y')\exp(-y'\cdot\mu))$.  Let
  $B' = B(\tilde{\mu}, \bdrad(\lambda, \mu))$ be the $(n-1)$-dimensional ball of radius
  $\bdrad(\lambda, \mu)$ centered at $\tilde{\mu}$, as defined in the proof of
  \cref{prop:vol-upper-estimate}.  The computation leading to \cref{eq:46} in
  the proof of \cref{prop:vol-upper-estimate}, combined with the upper bound on
  $V_{\lambda,\mu}$ given in \cref{eq:53} then shows that for some absolute positive
  constant $C'$ (here \(r = r(\lambda, \mu)\) is as defined in \cref{eq-r-def})
  \begin{align}
    \tg_{\lambda,\mu}(y)
    &\geq
      \binom{n-1}{2}\log r - C'n^2 \log n +
      \log\int\limits_{\nu \in \SH(\lambda) \st p(\nu) \in B'}
      \exp(y' \cdot (\nu-\mu))
      \prod\limits_{1 \leq i \leq n - 1} d\nu_{i}\\
    &=
      \binom{n-1}{2}\log r - C'n^2 \log n +
      \log\int\limits_{\tilde{\nu} \in B'}
      \exp(y \cdot (\tilde{\nu}-\tilde{\mu}))
      \prod\limits_{1 \leq i \leq n - 1} d\tilde{\nu}_{i}
  \end{align}
  From the definition of $\bdrad(\lambda,\mu)$ in \cref{eq-bdrad-def}, we see that
  $\bdrad(\lambda,\mu) \leq r = r(\lambda, \mu)$.  The claim then follows by
  estimating the last integral using \cref{eq:68} in \cref{prop:ball-integral}
  and choosing $C$ to be a large enough constant.
\end{proof}

Combining \cref{ob:g-at-0,lem:g-lower-bound}, we obtain the following bounds on the
domain of optimization.
\begin{proposition}\label{prop:optimizer-setup}
  There exists an absolute constant $C$ such that the following is true for all
  $n \geq 4$.  Let $\lambda, \mu \in \R^n$ be as in the statement of
  \cref{thm:main}.  Let $\bdrad(\lambda, \mu)$ be as defined in
  \cref{eq-bdrad-def}, and define
  \begin{equation}
    R(\lambda, \mu) \defeq
    \frac{1}{\bdrad(\lambda, \mu)}
    \cdot
    \max\inb{
      \sqrt{n}
      \cdot \log \lvol{\tpGT(\lambda)},
      {Cn^3}
      \cdot
      \max\inb{
        1,
        \log\frac{1}{\bdrad(\lambda,\mu)}
      }
    }.
  \end{equation}
If
  $z \in \R^{n-1}$ is a minimizer of $\tg_{\lambda, \mu}$, then $z \in K$, where
  $K = K(\lambda, \mu)$ is the Euclidean ball of radius $R(\lambda, \mu) + 1$ in $\R^{n-1}$, centered
  at the origin.
\end{proposition}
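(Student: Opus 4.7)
The plan is a short proof by contradiction that simply combines the two preceding results, \cref{ob:g-at-0} and \cref{lem:g-lower-bound}. By \cref{ob:g-at-0}, we have $\tg_{\lambda,\mu}(\vec{0}) = \log \vol{\tpGT(\lambda)}$, so any minimizer $z$ of $\tg_{\lambda,\mu}$ must satisfy $\tg_{\lambda,\mu}(z) \le \log \vol{\tpGT(\lambda)}$. I would assume for contradiction that $\|z\| > R(\lambda,\mu) + 1$ and show that this forces $\tg_{\lambda,\mu}(z)$ to strictly exceed $\log \vol{\tpGT(\lambda)}$.

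The key observation is that the second term inside the max in the definition of $R(\lambda,\mu)$ is precisely (up to the leading factor $1/\bdrad(\lambda,\mu)$) the lower bound on $\|y\|$ required by \cref{lem:g-lower-bound}. In particular, $R(\lambda,\mu) \ge \frac{Cn^3}{\bdrad(\lambda,\mu)}\max\{1,\log(1/\bdrad(\lambda,\mu))\}$, so if $\|z\| > R(\lambda,\mu)+1$, then $z$ is certainly above the threshold of \cref{lem:g-lower-bound}, giving
\begin{equation*}
\tg_{\lambda,\mu}(z) \;\ge\; \frac{\bdrad(\lambda,\mu)\cdot\|z\|}{\sqrt{n-1}}.
\end{equation*}

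To convert this into a contradiction with $\tg_{\lambda,\mu}(z) \le \log\vol{\tpGT(\lambda)}$, I split into two cases based on the sign of $\log\vol{\tpGT(\lambda)}$. If $\log\vol{\tpGT(\lambda)} \le 0$, then since $R(\lambda,\mu) > 0$ and $\bdrad(\lambda,\mu) > 0$, the displayed lower bound already gives $\tg_{\lambda,\mu}(z) > 0 \ge \log\vol{\tpGT(\lambda)}$. If $\log\vol{\tpGT(\lambda)} > 0$, then the first term inside the max in the definition of $R(\lambda,\mu)$ ensures $R(\lambda,\mu) \ge \sqrt{n}\,\log\vol{\tpGT(\lambda)}/\bdrad(\lambda,\mu) \ge \sqrt{n-1}\,\log\vol{\tpGT(\lambda)}/\bdrad(\lambda,\mu)$, so that $\|z\| > R(\lambda,\mu)+1$ yields $\tg_{\lambda,\mu}(z) > \log\vol{\tpGT(\lambda)}$ again. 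Either way the minimizer assumption is contradicted, so $\|z\| \le R(\lambda,\mu)+1$, i.e.\ $z \in K_{-1}$.

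There is no real obstacle here; the only thing to be careful about is that $\log\vol{\tpGT(\lambda)}$ can be negative in general (for instance when the gaps $\lambda_i - \lambda_j$ are small), which is exactly why the definition of $R(\lambda,\mu)$ takes a max with the $Cn^3$ term rather than relying solely on the $\sqrt{n}\log\vol{\tpGT(\lambda)}$ term. The extra ``$+1$'' and ``$+2$'' in the definitions of the radii of $K_{-1}$ and $K$ are not needed for this particular argument, and presumably exist to provide a safety margin for the weak-membership oracle invocation later (in \cref{thm-gls-optimizer}).
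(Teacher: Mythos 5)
Your argument is correct, and it is essentially the paper's own proof, just with the details spelled out. The paper's proof is a one-liner — ``Let $C$ be as in \cref{lem:g-lower-bound}. If $z \not\in K_{-1}$, then $\norm{z} \geq R(\lambda, \mu) + 1$. It then follows from the definition of $R(\lambda, \mu)$ combined with \cref{ob:g-at-0,lem:g-lower-bound} that $\tg_{\lambda,\mu}(z) > \tg_{\lambda,\mu}(\vec{0})$.'' — and your write-up is exactly the unpacking of that sentence: you correctly observe that the second term inside the $\max$ in the definition of $R(\lambda,\mu)$ guarantees that $\norm{z}$ clears the threshold of \cref{lem:g-lower-bound}, that the resulting lower bound $\tg_{\lambda,\mu}(z) \geq \bdrad(\lambda,\mu)\norm{z}/\sqrt{n-1}$ exceeds $\tg_{\lambda,\mu}(\vec 0) = \log\vol{\tpGT(\lambda)}$ whether or not that value is positive (and the first term in the $\max$ is exactly what handles the positive case, with the harmless $\sqrt{n} \geq \sqrt{n-1}$ slack), and that the ``$+1$''/``$+2$'' padding is not needed for this step but is there as margin for the later weak-membership oracle call.
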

\begin{proof}
  Let $C$ be as in the statement of \cref{lem:g-lower-bound}.  If
  $z \not\in K$, then $\norm{z} \geq R(\lambda, \mu) + 1$.  It then follows from the
  definition of $R(\lambda, \mu)$ combined with \cref{ob:g-at-0,lem:g-lower-bound} that
  $\tg_{\lambda,\mu}(z) > \tg_{\lambda,\mu}(\vec{0})$.  Thus $z$ cannot be a minimizer of
  $\tg_{\lambda,\mu}$.
\end{proof}

\begin{remark} \label{rem:R} Consider the case when $\la$ and $\mu$ are integer
  vectors, the entries of \(\lambda\) are polynomially bounded in $n$ and are
  all distinct, and $\mu$ lies in the relative interior of $\SH(\la)$.  Under
  these conditions, \cref{eq-bdrad-bound} shows
  that the condition parameter $\frac{1}{\bdrad(\la, \mu)}$ is also bounded above by a
  polynomial in $n$, and hence (using \cref{eq-log-gt-vol-estimate}) it follows
  that there is an absolute constant $C$ such that $R(\la, \mu) < n^C$.
\end{remark}

Optimization of $\tg_{\lambda,\mu}$ also requires an efficient additive approximation
for $\tg_{\lambda,\mu}$.  As we show below, the following proposition will be sufficient
for our needs.  We first set up some standard notation.

\paragraph{\textbf{Notation.}} Given a rational number \(x\), we denote by \(\rep{x}\)
the number of bits required to specify the numerator and denominator of the
reduced form of \(x\) in binary.  We also extend this notation to sets and
tuples of rational numbers, so that, for a finite tuple $S$ of rational numbers,
$\rep{S}$ denotes the total representation length of all the rational numbers in
$S$.  By \(\poly{\dots}\) we mean a polynomial of its arguments, with the degree
and the coefficients of the polynomial being absolute constants; however,
different occurrences of \(\poly{\cdot}\) denote possibly different polynomials.

\begin{proposition}[Approximation of $\tg_{\lambda,\mu}$]\label{prop-compute-g}
There exists a deterministic algorithm with the following guarantee.

\medskip
\noindent\textbf{INPUT:}
\begin{itemize}
  \item A rational accuracy parameter $0 < \delta < 1$.
  \item A positive rational \(n\)-tuple \(\lambda\) satisfying
    $ \lambda_1-(n-1) \ge \lambda_2-(n-2) \ge \cdots \ge \lambda_{n-1}-1 \ge
    \lambda_n > 0.  $ (In other words, \(\gap \geq 1\).)
  \item A rational vector $\mu \in \SH(\lambda)$.
  \item A rational vector $y \in \mathbb{R}^{n-1}$.
\end{itemize}

\medskip
\noindent\textbf{OUTPUT:}
A rational number $t$ such that
$
  \bigl| \tg_{\lambda,\mu}(y) - t \bigr| \le \delta .
$

\medskip
\noindent\textbf{RUNTIME:}
The algorithm runs in time polynomial in
\(n, \|\lambda \|, \|y\|, \text{and } \rep{\lambda,\mu,y,\delta}\), where
$\rep{\cdot}$ denotes the binary encoding length of the input.
\end{proposition}

\begin{remark}
  For simplicity, the above result is stated for the case \(\gap \geq 1\).
  However, using the identity (see \cref{eq-scale-shift-s})
  \( S_{\lambda}(x) = \gap^{\binom{n}{2}}\cdot S_{\lambda/\gap}(\gap x)\), the
  same proof shows that this assumption can be dropped at the cost of a further
  polynomial dependence on \(\max\inb{1, 1/\gap}\) in the runtime.
\end{remark}
\begin{proof}[Proof of \cref{prop-compute-g}]

By \cref{ob:project}, $\hat{g}_{\la, \mu} (y) = g_{\la, \mu}(x)$ for any $x$ such
  that $y = q(x)$. For specificity, we choose $x$ to be the vector obtained by
  appending $0$ to $y$, so that $\norm{x} = \norm{y}$.  In order to evaluate
  $g_{\la, \mu}(x)$ to within accuracy $\pm \de$, it suffices to evaluate
  $\log S_\la(x)$ to within $\pm\de/2$, and $x^\top\mu$ to within $\pm\de/2$. Since
  the latter is easy, we focus on evaluating $\log S_\la(x)$ to within
  $\pm c\de$, where $c$ is some positive absolute constant.
It turns out that the main algorithmic issues to handle are (1) $x$
  potentially having non-distinct entries, which makes the determinant formula
  in \cref{eq:s-det-unequal} inapplicable, and (2) careful accounting for the
  representation lengths of exponential entries in the determinant appearing in
  \cref{eq:s-det-unequal}.

  For the first requirement above, we perturb \(x\) slightly.  The calculation
  in \cref{eq:15} shows that $\nabla \log S_{\lambda}(x)$, being a weighted mean of vectors
  in $\SH(\lambda)$, lies in $\SH(\lambda)$, so that $\log S_{\lambda}$ is an
  $L$-Lipschitz function for any
  $L \geq \max_{v \in \SH(\la)}\norm{v} = \norm{\lambda}$.  To keep $L$ integral, we make
  the concrete choice $L = \ceil{\norm{\lambda}}$. Note that $S_\la(x)$ is a symmetric
  function of $x$. So without loss of generality suppose that $x_1 \geq \dots \geq x_n$. Let
  \begin{equation}
    \label{eq-x-hat-def}
    \hat{x} = (x_1 + \frac{(n-1)\de}{2n^2L}, x_2 + \frac{(n-2)\de}{2n^2L}, \dots,
    x_n).
  \end{equation}
  Then,
  \begin{enumerate}
  \item The entries of \(\hat{x}\) are strictly decreasing, and the gap between
    successive entries is at least \(\frac{\delta}{2n^2L}\).
  \item \(\norm{x - \hat{x}} < \frac{\delta}{4L} < 1/4\), so that
    $|\log S_\la(\hat{x}) - \log S_\la(x)| < \de/4$.
  \end{enumerate}
  Since \(\hat{x}\) has distinct entries, we can now use the determinant formula
  \begin{equation}
    \label{eq-S-compute-approx}
    S_{\lambda}(\hat{x}) = \frac{\det[\exp(\hat{x}_i\lambda_j)]_{1 \leq i, j \leq n}}{V(\hat{x})},
  \end{equation}
  where \(V\) represents the Vandermonde determinant.  We first note that
  \(\log V(\hat{x})\) can be approximated to an additive accuracy of
  \(\pm\delta/8\) using at most
  \(\poly{n, \rep{\lambda}, \rep{y}, \rep{\delta}}\) bit operations (see
  \cref{lem-log-approx}).

  It remains to show that the same can be done for the determinant
  \(\det[\exp(\hat{x}_i\lambda_j)]_{1 \leq i, j \leq n}\).  
We
  begin by establishing a lower bound on the absolute value of this determinant.

  For this, we let \(T\) denote \(2n(n-1)\) times the least common multiple of
  the denominators of the \(\lambda_i\).  Note that \(T\) can be computed in
  \(\poly{n, \rep{\lambda}}\) bit operations and \(\log T\) is bounded above by
  \(\poly{n, \rep{\lambda}}\).  Consider the vector
  \begin{equation}
    \label{eq:21}
    \hat{\la} \defeq (T{\la}_1 - (n-1), T{\la}_2 - (n-2),
    \dots, T{\la}_{n-1} -1, T{\la}_n).
  \end{equation}
  Note that \(\hat{\lambda}\) is a non-negative integer tuple with gaps of at least
  $1$ between successive entries.  From \cref{eq-jacobi-trudi-ratio-of-dets}, we
  then see that we can relate our determinant to the Schur polynomial as
  follows:
  \begin{equation}
    \label{eq:23}
    s_{\hat{\la}}(\exp\inp{\hat{x}/T}) =
    \frac{\det[\exp(\hat{x}_i{\la}_j)]_{1\leq i,j \leq
        n}}{V(\exp(\hat{x}/T))}.
  \end{equation}
  We now recall that the coefficients of the expansion of the Schur polynomial
  $s_{\hat{\la}}(z)$ using the monomial basis (\cref{eq-schur-poly}) are all
  non-negative, and the coefficient of $z^{\hat{\la}}$ in this expansion equals
  $1$.  This observation implies that
  $$s_{\hat{\la}}(e^{\hat{x}/T}) \geq \exp(\hat{x}\cdot \hat{\la}/T),$$ and
  consequently,
  \begin{equation}
    \label{eq:crux}
    \det[\exp(\hat{x}_i{\la}_j)]_{1\leq i,j \leq
      n} \geq \exp(\hat{x}\cdot \hat{\la}/T) V(\exp(\hat{x}/T)) \defeq v.
  \end{equation}
  We first show that \(v\) is not too small.  In particular, we have
  \begin{align}
    \abs{\log v}
    &\leq \frac{\norm{\hat{x}}\norm{\hat{\lambda}}}{T}
      + \sum_{i=1}^{n-1}\frac{(n-i)\abs{\hat{x}_i}}{T}
      + \sum_{i=1}^{n-1}\sum_{j = i + 1}^n\abs{\log (1 - \exp(-(\hat{x}_i - \hat{x}_j)/T))}\\
    &\leq \frac{\norm{\hat{x}}\norm{\hat{\lambda}}}{T} +
      \frac{n\sqrt{n}\norm{\hat{x}}}{T} +
      \binom{n}{2}\abs{\log (1 - \exp(-\delta/(2n^2LT)))},\label{eq-v-prelim-bound}\\
    &\qquad\qquad
      \text{ since }
      \hat{x}_i - \hat{x}_j \geq \frac{\delta}{2n^2L} \text{ for } i < j.\nonumber
  \end{align}
  Now note that
  \(\norm{\hat{\lambda}}/T \leq \norm{\lambda} + \frac{n(n-1)}{2T} \leq \norm{\lambda} + 1/4\), and
  also that \(\norm{\hat{x}} < \norm{x} + 1 = \norm{y} + 1\).  From
  \cref{lem-log-one-minux-exp-approx}, we thus get that
  \begin{equation}
    \abs{\log v} \leq (1+\norm{y})(1 + \norm{\lambda}) + n^2\log\inp{\frac{4n^2\abs{\lambda}T}{\delta}}.
  \end{equation}
  In particular, this implies that we can construct a positive integer \(v'\)
  satisfying \(v' \geq 1/v \) in
  \(\poly{n, \norm{\lambda}, \norm{y}, \rep{y, \lambda, \delta}}\) bit operations.

  Now, define \(\tau \defeq \max_{1 \leq i, j \leq n} |\hat{x}_i {\la}_j|\).  Note that
  \(\tau\) is bounded above by a polynomial in \(\abs{\lambda}\) and
  \(\abs{y}\), and also that \(\rep{\tau}\) is at most
  \(\poly{\rep{y, \lambda, \delta}}\).  Thus, using
  \(\poly{n, \norm{y}, \norm{\lambda}, \rep{y, \lambda, \delta}}\) bit operations we can
  construct a positive integer \(D'\) such that
  \begin{equation}
    D' \geq 2n \cdot \max\inp{1, \frac{32\cdot n! \cdot \tau^n \cdot v'}{\delta}}.
  \end{equation}
  Using standard approximations for the exponential function (see
  \cref{lem-exp-approx}) we then construct, using at most
  \(\poly{\tau, \rep{\tau, D'}} \leq \poly{n, \norm{y}, \norm{\lambda}, \rep{y,
      \lambda, \delta}}\) bit operations, an \(n \times n\) matrix \(M'\) such
  that for each \(i, j\), \(M'_{i,j}\) is an \((1/D')\)-relative
  approximation\footnote{Given \(\epsilon > 0\), we say that a quantity
    \(\tilde{v}\) is an \emph{\(\epsilon\)-relative approximation} of a quantity
    \(v\) if \(\abs{\tilde{v} - v} \leq \epsilon\abs{v}\).} of
  \(\exp(\hat{x}_i\lambda_j)\).  Note that by construction,
  \(\rep{M'} \leq \poly{n, \norm{y}, \norm{\lambda}, \rep{y, \lambda,
      \delta}}\).  As described in \cref{lem-det-approx}, \(\det(M')\) can thus
  be computed in
  \(\poly{n, \norm{y}, \norm{\lambda}, \rep{y, \lambda, \delta}}\) bit
  operations and provides a \((\delta/16)\)-relative approximation to
  \(\det[\exp(\hat{x}_i{\la}_j)]_{1\leq i,j \leq n}\).  Taking its logarithm to
  \(\pm \delta/32\) additive accuracy (see \cref{lem-log-approx}) gives a
  \(\pm\delta/8\) additive approximation to
  \(\log \det[\exp(\hat{x}_i{\la}_j)]_{1\leq i,j \leq n}\), again using
  \(\poly{n, \norm{y}, \norm{\lambda}, \rep{y, \lambda, \delta}}\) bit
  operations.

  In combination with the discussion following
  \cref{eq-x-hat-def,eq-S-compute-approx}, the procedure outlined above gives a
  \(\pm\delta/2\) additive approximation to \(\log S_{\lambda}(x)\) using at most
  \(\poly{n, \norm{y}, \norm{\lambda}, \rep{\lambda, y, \delta}}\) bit operations.  Since
  \(\tg_{\lambda, \mu}(y) = S_{\lambda}(x) - x^\top\mu\), this completes the proof.
\end{proof}

We now specialize to the setting of \cref{rem:R}.  Using
\cref{prop:optimizer-setup,prop-compute-g} we can apply Theorem 4.3.13 of
\cite{GLSbook} (based on results of \cite{YN76}) to minimize
$\tg_{\lambda,\mu}$ over the convex body
$K({\lambda,\mu})$ (defined in \cref{prop:optimizer-setup}) and get an
$\epsilon$-additive approximation of the minimum value of $\tg_{\la,
  \mu}$ in time polynomial in
$n$ and the binary representation lengths of $\epsilon$, $\lambda$ and
$\mu$. We describe Theorem 4.3.13 of \cite{GLSbook} below, followed by details of the
inputs passed to the algorithm promised by it.  We use the following notation of
\cite{GLSbook}.  When \(r > 0\), we denote by $S(\alpha,
r)$ the set of all points within Euclidean distance $r$ of
$\alpha$, where
$\alpha$ can be a point or a set of points.  When \(r < 0\), and \(\alpha\) is a
set, \(S(\alpha, r)\) denotes the set of those points \(\beta\) for which
\(S(\beta, -r)\) is contained in \(\alpha\).

\begin{theorem}[\textbf{\cite[Theorem
    4.3.13]{GLSbook}}] \label{thm-gls-optimizer} There exists an
  oracle-polynomial time algorithm that solves the following problem: \\
  \textbf{INPUT:}  A
  rational number $\epsilon > 0$, a convex body $K \subseteq \R^m$, along with a point
  \(a_0 \in \R^m\) and positive reals \(r < R\) such that
  $S(a_0, r) \subseteq K \subseteq S(a_0, R)$, given by a weak membership oracle, and a convex
  function $f: \R^m \ra \R$ given by an oracle that, for every $x \in \Q^m$ and
  $\de > 0$, returns a rational number $t$ such that $|f(x) -t| \leq \de.$
\\
  \textbf{OUTPUT:} A vector $y \in S(K, \eps)$ such that $f(y) \leq f(x) + \eps$ for all $x \in  S(K, -\eps)$.\\
   \textbf{RUNTIME:} The algorithm uses at most \(\poly{\rep{a_0, r, R, \eps}}\)
   bit operations provided that the weak  membership oracle for $K$ and the
   evaluation oracle for $f$ can be implemented to run in time polynomial in the
   representation lengths of their inputs..
\end{theorem}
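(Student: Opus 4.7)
The plan is to obtain this result as a consequence of a version of the shallow-cut (or central-cut) ellipsoid method adapted to handle both the feasibility constraint (specified only by a weak membership oracle on $K$) and the objective function (specified only by an approximate evaluation oracle for the convex $f$). The setup is classical: we maintain a sequence of ellipsoids $E_0 \supseteq E_1 \supseteq \dots$, each of which contains the set of near-optimal near-feasible points, and at each step we either cut $E_t$ with a hyperplane separating its center from an appropriate convex set, or we certify that $E_t$ is small enough that its center is an acceptable answer. The initial ellipsoid is the ball $S(a_0, R)$, which by assumption contains $K$ (and hence the optimum).

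The first key step is to construct, from the weak membership oracle for $K$, a weak separation oracle for $K$: given a query point $y$ and a tolerance, either assert that $y$ is $\epsilon$-close to $K$, or produce a hyperplane that almost-separates $y$ from the $(-\epsilon)$-interior of $K$. This is the Yudin–Nemirovskii construction, and it is built by making polynomially many membership queries along carefully chosen directions, using the guarantee $S(a_0,r) \subseteq K$ to control the numerical conditioning of the resulting hyperplane. Once this separation oracle is in hand, the ellipsoid method can handle infeasible iterates in the standard way: if the current center $x_t$ is declared far from $K$, cut $E_t$ by the separating hyperplane.

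The second key step is to handle the objective $f$ when $x_t$ is (approximately) feasible. Here we use the evaluation oracle to approximately compute a subgradient of the convex function $f$ at $x_t$. A standard approach is to use finite differences: evaluate $f$ at $x_t$ and at nearby points $x_t \pm h e_i$ to approximate the gradient (or, more robustly for non-smooth $f$, approximate a supporting hyperplane via a small localization procedure around $x_t$). The convexity of $f$ ensures that the approximate subgradient at $x_t$ gives a valid shallow cut: the half-space $\{z : \langle g, z - x_t \rangle \le 0\}$ contains all points where $f$ beats $f(x_t)$, up to error controlled by the oracle tolerance $\delta$, which we set polynomially small in $\epsilon, r, R$ and the dimension.

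The analysis then runs the volume-shrinking argument of the ellipsoid method: each cut reduces the volume of $E_t$ by a factor of $\exp(-\Omega(1/m))$, so after $\mathrm{poly}(m, \log(R/r), \log(1/\epsilon))$ iterations the ellipsoid has volume smaller than that of a ball of radius comparable to $\epsilon \cdot r / R$. At that point the best feasible center we have seen, or a perturbation of the final center, can be shown to satisfy $f(y) \le f(x) + \epsilon$ for all $x \in S(K,-\epsilon)$, because any strict improvement would contradict the volume bound together with the convexity of $f$. The main technical obstacle, and the reason the oracle-polynomial bound is nontrivial, is controlling the accumulated numerical error: both the weak-separation construction and the finite-difference subgradient are inexact, and one must choose the internal tolerances small enough (but still polynomially large in bit length) that the ellipsoid invariants are preserved throughout the execution. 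This careful error accounting is the heart of the proof in \cite[Section 4.3]{GLSbook}, and we appeal to it directly rather than reproducing it here.
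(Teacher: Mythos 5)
The paper does not prove this statement at all: it quotes it verbatim as~\cite[Theorem 4.3.13]{GLSbook} and uses it as a black box. Your ``proof'' is therefore not being compared against anything in the paper; it is a high-level summary of the Yudin--Nemirovskii / Gr\"otschel--Lov\'asz--Schrijver ellipsoid argument, which you yourself defer to ``the heart of the proof in~\cite[Section 4.3]{GLSbook}'' for the error accounting. In that sense your write-up is consistent with the paper's treatment, but it is not an independent proof, and for a result imported wholesale from a reference you would be better served by simply citing it, as the paper does.

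One substantive caveat on the sketch itself: the step where you propose to obtain a cutting hyperplane for $f$ at a feasible center $x_t$ by coordinate-wise finite differences is not, on its own, sound for a general (possibly non-smooth) convex function given only through a $\pm\delta$-approximate evaluation oracle. A finite-difference quotient need not approximate any subgradient of a non-differentiable convex function, and an approximate-gradient half-space built this way need not contain the sublevel set $\{z : f(z) \le f(x_t)\}$, so it is not a valid ellipsoid cut. The approach actually taken in GLS avoids this: weak optimization of a convex $f$ over $K$ is reduced to a sequence of weak feasibility problems (binary search on the level $t$, over the sets $K \cap \{x : f(x) \le t\}$, each of which admits a weak membership oracle built from the evaluation oracle for $f$ and the membership oracle for $K$), together with the shallow-cut ellipsoid machinery for turning weak membership into weak separation. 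Your parenthetical ``localization procedure'' gestures at a fix, but as written the finite-difference step is the one place where the sketch would genuinely fail if one tried to carry it out.

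Since the paper treats this theorem as an external citation and never proves it, the appropriate thing for you to do is likewise to cite~\cite[Theorem 4.3.13]{GLSbook} rather than sketch a proof; if you do want to sketch it, replace the finite-difference subgradient step with the epigraph/level-set reduction that GLS actually use.
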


In order to apply the above theorem in the setting of \cref{rem:R}, we need to
specify the function $f$, which we now do. As seen in the preceding proof,
\(\tg_{\lambda, \mu}\) is \(L\)-Lipschitz with
\(L = \ceil{\norm{\lambda}} \geq 1\).  From \cref{ob:g-at-0}, we can efficiently
compute a lower bound \(v_0\) for $\tg_{\la, \mu}(0)$ satisfying
\(v_0 \leq \tg_{\lambda, \mu}(0) \leq v_{0} + 1\).  It also follows from
\cref{ob:g-at-0} that this \(v_{0}\) satisfies
\(\abs{v_{0}} \leq {n^2}\cdot(\log n + \abs{\log \lambda_1})\).  For
\(C_1 = 3(R(\la, \mu) + 2)\) (where \(R(\lambda, \mu) \) is as defined in
\cref{prop:optimizer-setup}) and $C_2 = 2L$, we take $f$ to be defined on
$\R^m = \R^{n-1}$ by
\begin{equation}
  \label{eq:14}
  f(x) := \max(\tg_{\la, \mu}(x), v_0 - C_1L + C_2 \|x\|).
\end{equation}
Note that $f$ is convex.  We take $K$ to be the ball of radius
$R(\la, \mu) + 3/2$ centered at the origin, $a_0 = 0$ and
$r = R = R(\la, \mu) + 3/2$.  Then, we see that on $S(K, 1/2)$ (which is just the
ball of radius \(R(\lambda, \mu) + 2\) centered at the origin), $f$ equals
$\tg_{\la, \mu}$, while outside the ball of radius \(3(R(\la, \mu) + 2) + 1\) centered
at the origin, $f$ equals $v_0 - C_1 + C_2 \|x\|_2$.  Recall that in the setting
of \cref{rem:R}, \(R(\lambda, \mu)\) is bounded above by \(n^{C}\) for some fixed
constant \(C\).  Thus, the evaluation of \(f\) requires the evaluation of
\(\tg_{\lambda, \mu}(y)\) only for \(y\) within a origin-centered ball of radius
\(3(n^{C} + 1)\) and hence, by \cref{prop-compute-g}, can be implemented using
\(\poly{n, \rep{\lambda, \mu, x, \delta}}\) bit operations (since, in the setting of
\cref{rem:R}, the entries of \(\lambda\) are bounded above by a polynomial in \(n\)).

Suppose now that the algorithm promised in \cref{thm-gls-optimizer} is
instantiated with the above \(f\) and \(K\) and accuracy parameter
\(\epsilon/2\) with \(\epsilon \in (0, 1/2)\).  Then, since the global minimizer
of \(\tg_{\lambda, \mu}\) is contained in \(S(K, -1/2)\) (by
\cref{prop:optimizer-setup}), and since \(f\) agrees with \(\tg_{\lambda,\mu}\)
on \(S(K, 1/2)\), the guarantee provided in \cref{thm-gls-optimizer} implies
that the algorithm outputs a vector \(y \in S(K, 1/2)\) such that
\(f(y) = \tg_{\lambda, \mu}(y) \leq \tg_{{\lambda, \mu}}^{*} + \epsilon/2\),
where \(\tg_{\lambda,\mu}^{*}\) is the minimum value of \(\tg_{\lambda, \mu}\).
Further, the algorithm runs in time \(\poly{n, \rep{\epsilon}}\), since when
\(\lambda\) and \(\mu \in \SH(\lambda)\) are integral with entries of
\(\lambda\) bounded above by a polynomial in \(n\), the representation lengths
\(\rep{\lambda}\) and \(\rep{\mu}\) are also at most \(\poly{n}\).  Note also
that \(\norm{y} \leq R(\lambda, \mu)\) and \(\rep{y}\) are also bounded above
by a polynomial in \(n\).  Using the algorithm in \cref{prop-compute-g} to
evaluate \(\tg_{\lambda, \mu}(y)\) to accuracy \(\pm\epsilon/2\), we thus have
the following theorem regarding the computation of \(\tg_{\lambda,\mu}^{*}\) and
hence of the estimate in \cref{eq:opt-approx}.
\begin{theorem} \label{thm-main-algo} Let \(\kappa\) be any positive integer.  Then,
  there exists an
  algorithm that solves the following problem with the runtime mentioned below. \\
  \textbf{INPUT:}
\begin{itemize}
  \item A rational accuracy parameter $0 < \epsilon < 1/2$.
  \item \(\lambda\) is an \emph{integral} partition with \(n\) parts of distinct
    sizes, where the magnitude of the entries of \(\lambda\) are bounded above
    \(\kappa\cdot n^{\kappa}\), and,
  \item  \(\mu\) is an \emph{integral} point in the interior of \(\SH(\lambda)\).
\end{itemize}
\textbf{OUTPUT:} A number \(v\) satisfying
\begin{equation}
  \label{eq:12}
  1 - \epsilon \leq \frac{v}{\inf_{x \in \R^n} S_{\lambda}(x) \exp(- x^T\mu)} \leq 1 + \epsilon.
\end{equation}
\textbf{RUNTIME:} \(\poly{n, \rep{\eps} }\) bit operations.
\end{theorem}
\begin{proof}
  Recall (from \cref{prop:log-convex-cont-schur,ob:project,eq:opt-approx}) that
  \(\inf_{x \in \R^n} S_{\lambda}(x) \exp(- x^T\mu) = \exp(\tg_{\lambda,
    \mu}^{*}\)).  Now, the discussion above shows that we can compute in
  \(\poly{n, \rep{\epsilon}}\) bit operations a rational number \(u\) such that
  \(\tg_{\lambda, \mu}^{*} \leq u \leq \tg_{\lambda, \mu}^{*} + \eps/4\), and
  which satisfies (due to the Lipschitz property of \(\tg_{\lambda, \mu}\)
  discussed in the preceding paragraphs and the bound on
  \(\tg_{\lambda, \mu}(0)\) given in \cref{ob:g-at-0})
  \(\abs{u} \leq \poly{n}\).  We now use \cref{lem-exp-approx} to compute \(v\)
  to be an \(\frac{\epsilon}{4}\)-relative approximation to \(\exp(u)\), in time
  \(\poly{n, \rep{\epsilon}}\).
\end{proof}

\appendix

\section{Jacobian computations}
In this section, we collect for easy reference some standard Jacobian
computations that are required in the application of the area and the co-area
formulas in the main text of the paper.

\paragraph{\textbf{Notation.}} Let \(m\) and \(n\) be positive integers.  We
denote by \(\vec{0}_{m \times n}\) (respectively, \(\vec{1}_{m\times n}\)) the
\(m \times n\) matrix all whose entries are \(0\) (respectively, \(1\)).  We
denote by \(\vec{I}_{m \times m}\) the \(m \times m\) identity matrix.

\subsection{\texorpdfstring{Computations for
    \cref{eq:schul-vol-rel,eq:vol-relation}}{Computations for applications of the
    area formula}}
\label{sec:comp-cref-vol}

For any real number $c$ and a positive integer $k$, define the map
$\beta_{c,k}: \R^{k} \rightarrow \R^{k+1}$ by
\begin{equation}
  \label{eq:27}
  \beta_{c,k}(\vec{x}) =
  \begin{pmatrix}
    \vec{x}\\
    c - \sum\limits_{i=1}^kx_i
  \end{pmatrix}.
\end{equation}
We then have
\begin{equation}
  \label{eq:28}
  D\beta_{c,k}(\vec{x})
  = M_k \defeq
  \begin{pmatrix}
    \vec{I}_{k \times k}\\
    -\vec{1}_{1\times k}
  \end{pmatrix}.
\end{equation}
Thus,
\begin{equation}
  D\beta_{c,k}(\vec{x})^\top D\beta_{c,k}(\vec{x}) = M_k^{\top}M_k =
  \vec{I}_{k \times k} +
  \vec{1}_{k \times k},
\end{equation}
which has eigenvalues $(k+1)$ (with multiplicity $1$) and $1$ (with multiplicity
$k-1$).  Thus we get
\begin{equation}
  \label{eq:30}
  J_{\beta_{c,k}}(\vec{x}) \defeq \sqrt{\det\inp{  D\beta_{c,k}(\vec{x})^\top
      D\beta_{c,k}(\vec{x})}} = \sqrt{\det\inp{{M_k}^{\top}M_k}} = \sqrt{k+1}.
\end{equation}

We now apply the above computation to justify
\cref{eq:vol-relation,eq:schul-vol-rel}. We begin with
\cref{eq:schul-vol-rel}. Let $\lambda$ be as in the setting of
\cref{eq:schul-vol-rel}, and consider the map
$f \defeq \beta_{\abs{\lambda}, n - 1}$.  Then $f$ is a bijection from
$\pSH(\lambda)$ to $\SH(\lambda)$.  The area formula~\cite[Theorem 3.2.3]{F96} then gives
\begin{equation}
  \H^{n-1}(\SH(\lambda))
  = \int\limits_{\pSH(\lambda)}
  J_f(x) \prod_{i=1}^{n-1}dx_{i}
  \overset{\textup{\cref{eq:30}}}{=} \sqrt{n}\cdot \lvol{\pSH(\lambda)},
\end{equation}
which proves \cref{eq:schul-vol-rel}.

We now turn to \cref{eq:vol-relation}.  Let $\lambda, \mu$ be as in that
equation.  Consider the map
$\gamma: \R^{(n-1)(n-2)/2} \rightarrow \R^{n(n-1)/2}$ given by
\begin{equation}
  \gamma\inp{
    \inp{x_{i\,j}}_{
      \substack{
        2 \leq i \leq n -
        1\\1\leq j \leq i - 1}
    }} \defeq
  \inp{x_{i\,j}}_{
    \substack{
      1 \leq i \leq n - 1
      \\1\leq j \leq i}
  },
\end{equation}
where for $1 \leq i \leq n -1$, $x_{i\, i}$ is set to
$\sum_{j=1}^i\mu_j - \sum_{j=1}^{i-1}x_{i\,j}$.  Note that $\gamma$ is a
bijection from $\pGT(\lambda, \mu)$ to $\GT(\lambda, \mu)$.  Thus, the area
formula~\cite[Theorem 3.2.3]{F96} yields that
\begin{equation}
  \label{eq:33}
  \H^{(n-1)(n-2)/2}(\GT(\lambda,\mu))
  = \int\limits_{
    \pGT(\lambda,\mu)
  }
  J_{\gamma}\inp{
    \inp{x_{i\,j}}_{
      \substack{
        2 \leq i \leq n -
        1\\1\leq j \leq i - 1}
    }
  }
  \prod\limits_{
    \substack{
      2 \leq i \leq n -
      1\\1\leq j \leq i - 1
    }} dx_{i\,j}.
\end{equation}
It remains to compute $J_{\gamma}$.  We begin by observing that the map $\gamma$
can be written in terms of the functions $\beta_{c,k}$ defined in \cref{eq:27},
as follows
\begin{equation}
  \gamma\inp{
    \inp{x_{i\,j}}_{
      \substack{
        2 \leq i \leq n -
        1\\1\leq j \leq i - 1}
    }} =
  \hat{\mu}_1 \oplus \bigoplus\limits_{i=2}^{n-1}\beta_{\hat{\mu}_i, i -
    1}\inp{\inp{x_{i\,j}}_{1 \leq j \leq i - 1}},
\end{equation}
where, for each $i$, $\hat{\mu_i}$ denotes the running sum $\sum_{j=1}^i\mu_j$.
We can thus write the first derivative of $\gamma$ as a
$\frac{n(n-1)}{2} \times \frac{(n-1)(n-1)}{2}$ matrix as follows.
\begin{align}
  D\gamma\inp{
    \inp{x_{i\,j}}_{
      \substack{
        2 \leq i \leq n -
        1\\1\leq j \leq i - 1}
  }}
  &=
    \begin{pmatrix}
      \vec{0}_{1 \times (n-1)(n-2)/2}\\
      \bigoplus\limits_{i=2}^{n-1}
      D\beta_{\hat{\mu}_i, i - 1}\inp{\inp{x_{i\,j}}_{1 \leq j \leq i - 1}}
    \end{pmatrix}\\
  &\overset{\textup{\cref{eq:28}}}{=}
    \begin{pmatrix}
      \vec{0}_{1 \times (n-1)(n-2)/2}\\
      \bigoplus\limits_{i=2}^{n-1}M_{i-1}
    \end{pmatrix}.
\end{align}
We thus get that
\begin{equation}
  \label{eq:25}
  D\gamma\inp{
    \inp{x_{i\,j}}_{
      \substack{
        2 \leq i \leq n -
        1\\1\leq j \leq i - 1}
  }}^{\top}
  D\gamma\inp{
    \inp{x_{i\,j}}_{
      \substack{
        2 \leq i \leq n -
        1\\1\leq j \leq i - 1}
  }}
  = \bigoplus\limits_{i=2}^{n-1}(M_{i-1}^{\top}M_{i-1}).
\end{equation}
Thus,
\begin{equation}
  \begin{aligned}[c]
    J_{\gamma}\inp{\inp{
    \inp{x_{i\,j}}_{
    \substack{
    2 \leq i \leq n -
    1\\1\leq j \leq i - 1}
    }}}
    &=
      \sqrt{\det\inp{
      D\gamma\inp{
      \inp{x_{i\,j}}_{
      \substack{
      2 \leq i \leq n -
      1\\1\leq j \leq i - 1}
    }}^{\top}
    D\gamma\inp{
    \inp{x_{i\,j}}_{
    \substack{
    2 \leq i \leq n -
    1\\1\leq j \leq i - 1}
    }}}}\\
    &\overset{\textup{\cref{eq:25}}}{=}
      \sqrt{\prod\limits_{i=2}^{n-1}\det\inp{M_{i-1}^{\top}M_{i-1}}}\\
    &\overset{\textup{\cref{eq:30}}}{=}
      \sqrt{(n-1)!}.
  \end{aligned}
\end{equation}
Substituting this in \cref{eq:33} yields \cref{eq:vol-relation}.

\section{Some useful estimates}
In this section we collect some standard computations. The only equation used from this section in the main body of the paper is  \cref{eq:68} in \cref{prop:ball-integral} which is used in the proof of Lemma~\ref{lem:g-lower-bound}.
\begin{proposition}
  Assume $n \geq 2$, and define the function
  \begin{equation}
    \label{eq:54}
    F_n(\alpha) \defeq \int\limits_{-1}^1 \exp(\alpha t) \cdot (1-t^2)^{n/2}dt.
  \end{equation}
  Then for every $\gamma > 0$
  \begin{equation}
    \label{eq:55}
    F_n(n\gamma) \geq \frac{e^{\gamma\sqrt{2n}}(\gamma\sqrt{2n} - 1) - (n\gamma^2-1)}{n^2\gamma^3}.
  \end{equation}
  Thus, there exists a positive constant $C$ such that if $\tau \geq
  C\sqrt{n}$ ,then
  \begin{equation}
    \label{eq:57}
    F_{n}(\tau) \geq \frac{\tau}{n}\exp(1.4 \,\tau/\sqrt{n}).
  \end{equation}
\end{proposition}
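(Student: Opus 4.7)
The plan is to substitute $t = s\sqrt{2/n}$ in \cref{eq:54}, which reshapes the exponent as $\gamma\sqrt{2n}\cdot s$ and thus matches the form $e^{\gamma\sqrt{2n}}$ that appears on the right-hand side of \cref{eq:55}. After the substitution, $F_n(n\gamma) = \sqrt{2/n}\int_{-\sqrt{n/2}}^{\sqrt{n/2}} e^{\gamma\sqrt{2n}\,s}\inp{1 - 2s^2/n}^{n/2}\,ds$. Since the integrand is non-negative and $\gamma > 0$, restricting the range of integration to $s \in [0,1]$ (which is contained in $[-\sqrt{n/2}, \sqrt{n/2}]$ for $n \geq 2$) yields a valid lower bound. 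On this subinterval I will apply Bernoulli's inequality $(1-u)^r \geq 1 - ru$ with $r = n/2 \geq 1$ and $u = 2s^2/n \in [0,1]$ to obtain the elementary lower bound $\inp{1 - 2s^2/n}^{n/2} \geq 1 - s^2$.

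What then remains is to evaluate $\int_0^1 e^{as}(1-s^2)\,ds$ in closed form with $a = \gamma\sqrt{2n}$. Two integrations by parts give $\bigl(2(a-1)e^a - (a^2 - 2)\bigr)/a^3$; multiplying by the prefactor $\sqrt{2/n}$ and simplifying using $a^2/2 = n\gamma^2$ and $a^3 = (2n)^{3/2}\gamma^3$ reproduces the bound in \cref{eq:55} exactly.

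For the second assertion \cref{eq:57}, set $\tau = n\gamma$ and $x = \gamma\sqrt{2n} = \tau\sqrt{2/n}$. For $x \geq 2$ an elementary check shows $e^x(x-1) \geq x^2 - 2$, whence $e^x(x-1) - (x^2/2 - 1) \geq \tfrac{1}{2}e^x(x-1) \geq \tfrac{1}{4}x e^x$. Substituting into the bound of \cref{eq:55} gives $F_n(\tau) \geq \frac{\sqrt{2n}}{4\tau^2}\,e^{\tau\sqrt{2/n}}$. Since $\sqrt{2} > 1.4$, writing $u = \tau/\sqrt{n}$, the desired inequality in \cref{eq:57} reduces to $e^{(\sqrt{2}-1.4)u} \geq 2\sqrt{2}\,u^3$, which holds once $u$ exceeds a sufficiently large absolute constant $C$ (this simultaneously ensures $x = u\sqrt{2} \geq 2$).

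The main ``obstacle'' is essentially bookkeeping in this last step: one must pick $C$ large enough that the excess exponential factor $e^{(\sqrt{2} - 1.4)u}$ dominates the polynomial $u^3$ together with any constants lost along the way. Neither the substitution nor the Bernoulli step introduces any real difficulty; in fact the chain of inequalities above recovers \cref{eq:55} without slack beyond the restriction to $s \in [0,1]$ and the Bernoulli application.
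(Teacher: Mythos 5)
Your proof is correct and follows essentially the same route as the paper's: restrict the integral to $t \in [0,\sqrt{2/n}]$ (your rescaling $t = s\sqrt{2/n}$ just renormalizes this subinterval to $s\in[0,1]$), apply the Bernoulli-type bound $(1-t^2)^{n/2}\geq 1 - nt^2/2$ there, and integrate the resulting expression exactly by parts to recover \eqref{eq:55}. Your working out of the second inequality, including the reduction to $e^{(\sqrt{2}-1.4)u}\geq 2\sqrt{2}\,u^3$ for $u = \tau/\sqrt{n}$, correctly supplies the ``standard estimates'' that the paper leaves implicit.
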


\begin{proof}
  For $\alpha > 0$ and $n \geq 2$, we have
  \begin{align}
    F_n(\alpha)
    & \geq \int\limits_0^{\sqrt{\frac{2}{n}}} \exp(\alpha t)
      (1-t^2)^{n/2} dt
      \geq
      \int\limits_0^{\sqrt{\frac{2}{n}}} \exp(\alpha t)
      \inp{1 - \frac{nt^2}{2}} dt\\
    &= \frac{e^{\alpha\sqrt{2/n}}(\alpha\sqrt{2n} - n)  - (\alpha^2-
      n)}{\alpha^3}. \label{eq-app-f-estimate}
  \end{align}
  \Cref{eq:55} now follows by setting $\alpha = \gamma n$.  To obtain
  \cref{eq:57}, we write \(\tau = A\sqrt{n}\) and use \cref{eq-app-f-estimate}
  to get
  \begin{align}
    \label{eq:11}
    F_n(\tau)
    &\geq \frac{
      \exp(A\sqrt{2})\cdot(A\sqrt{2} - 1) - (A^2 - 1)
      }{
      A^3\sqrt{n}
      }\\
    &= \frac{\tau}{n}\exp(1.4\tau/\sqrt{n})\cdot\insq{
      \frac{
      (A\sqrt{2} - 1)\cdot\exp((\sqrt{2} - 1.4)A) - (A^{2} - 1)\exp(-1.4 A)
      }{A^{4}}}.
  \end{align}
  Since \(\sqrt{2} - 1.4 > 0\), there exists a positive constant \(C\) such if
  \(\tau/\sqrt{n} = A \geq C\), then the quantity in the square brackets above
  is at least \(1\).  This establishes \cref{eq:57}.
\end{proof}

\begin{proposition}
  \label{prop:ball-integral} Assume $n \geq 3$. Let $B(v, r)$ denote the
  Euclidean ball of radius $r$ centered at $v \in \R^n$.  Let $y \in \R^n$ be an
  arbitrary vector.  Let $b_{n-1}$ denote the volume of the Euclidean unit ball
  in $\R^{n-1}$.  Then,
  \begin{equation}
    \label{eq:65}
    \log\int\limits_{z \in B(v, r)}\exp(y\cdot (z-v)) dz = n\log r
    + \log b_{n-1} + \log F_{n-1}(r\norm[2]{y}).
  \end{equation}
  Further, there exists an absolute constant $C > 0$, such that if
  $\norm{y} \geq \frac{Cn}{r}$ then
  \begin{equation}
    \label{eq:69}
    \log\int\limits_{z \in B(v, r)}\exp(y\cdot (z-v)) dz
    \geq
    n\log r
    + \log b_{n-1}+
    \frac{1.4\,r\norm{y}}{\sqrt{n-1}},
  \end{equation}
  and if $\norm{y} \geq \frac{Cn^3}{r}\max\inb{1, \log\frac{1}{r}}$ then
  \begin{equation}
    \label{eq:68}
    \log\int\limits_{z \in B(v, r)}\exp(y\cdot (z-v)) dz
    \geq
    \frac{1.2\,r\norm{y}}{\sqrt{n}}.
  \end{equation}
\end{proposition}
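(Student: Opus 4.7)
The plan is to prove \cref{eq:65} first as an exact identity, and then derive the two lower bounds (\cref{eq:69,eq:68}) from it by invoking the preceding estimate on $F_n$.

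For \cref{eq:65}, I would first translate by $v$ to rewrite the integral as $\int_{B(0,r)}\exp(y\cdot w)\,dw$. Because the integration domain is rotationally symmetric, I can apply an orthogonal change of variables that sends $y/\norm{y}$ to the first standard basis vector, which reduces the integrand to $\exp(\norm{y}w_1)$. Slicing by the first coordinate $w_1 = t$ and using that the cross-section is an $(n-1)$-dimensional Euclidean ball of radius $\sqrt{r^2 - t^2}$, Fubini gives
\begin{equation}
\int_{B(0,r)} \exp(\norm{y}w_1)\,dw
= b_{n-1}\int_{-r}^{r}\exp(\norm{y}t)(r^2 - t^2)^{(n-1)/2}\,dt.
\end{equation}
The substitution $t = rs$ pulls out a factor of $r^{n}$ and turns the integral into $F_{n-1}(r\norm{y})$ (as defined in \cref{eq:54}). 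Taking logarithms yields \cref{eq:65}.

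To prove \cref{eq:69}, I set $\tau \defeq r\norm{y}$ and note that the hypothesis $\norm{y}\geq Cn/r$ forces $\tau \geq Cn$, which, for $C$ large enough, satisfies the hypothesis $\tau \geq C'\sqrt{n-1}$ of \cref{eq:57}. That bound then gives $\log F_{n-1}(\tau) \geq \log(\tau/(n-1)) + 1.4\tau/\sqrt{n-1}$. The hypothesis $\tau \geq Cn$ also forces $\tau/(n-1) \geq 1$, so $\log(\tau/(n-1)) \geq 0$ and can be dropped. Substituting into \cref{eq:65} gives \cref{eq:69}.

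For \cref{eq:68}, the goal is to absorb the additive terms $n\log r$ and $\log b_{n-1}$ appearing in \cref{eq:69} into the linear part $1.4\,r\norm{y}/\sqrt{n-1}$, at the price of weakening the coefficient from $1.4$ to $1.2$ and $\sqrt{n-1}$ to $\sqrt{n}$. Using Stirling's formula one has $\abs{\log b_{n-1}} = O(n\log n)$, and when $r < 1$ one has $-n\log r = n\log(1/r)$ (when $r \geq 1$ the term $n\log r$ is non-negative and is simply discarded). Thus it suffices to show that
\begin{equation}
\Bigl(\tfrac{1.4}{\sqrt{n-1}} - \tfrac{1.2}{\sqrt{n}}\Bigr)\,r\norm{y}
\;\geq\; n\log(1/r) + O(n\log n),
\end{equation}
whose left side is at least $0.2\,r\norm{y}/\sqrt{n}$. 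The hypothesis $\norm{y} \geq \tfrac{Cn^3}{r}\max\{1,\log(1/r)\}$ gives $r\norm{y} \geq Cn^3\max\{1,\log(1/r)\}$, and for sufficiently large $C$ this comfortably dominates $5\sqrt{n}\,(n\log(1/r) + O(n\log n))$, so \cref{eq:68} follows.

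The main obstacle is the bookkeeping in the last step: one must verify that the single condition $\norm{y} \geq \tfrac{Cn^3}{r}\max\{1,\log(1/r)\}$ simultaneously dominates the $n\log(1/r)$ term (which is active only for $r<1$) and the $\log b_{n-1} = O(n\log n)$ term, for some uniform constant $C$. Handling the two regimes $r<1$ and $r\geq 1$ carefully, and using Stirling to control $\log b_{n-1}$, resolves this.
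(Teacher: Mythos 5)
Your proposal is correct and follows essentially the same route as the paper: slice the ball perpendicular to $y$ (the paper rescales to the unit ball and invokes the co-area formula; you rotate and apply Fubini — the same computation) to obtain the exact identity involving $F_{n-1}(r\norm{y})$, then combine the estimate $F_{n-1}(\tau)\geq(\tau/(n-1))\exp(1.4\tau/\sqrt{n-1})$ from \cref{eq:57} with Stirling's bound $\abs{\log b_{n-1}}=O(n\log n)$ to derive the two lower bounds. Your bookkeeping for absorbing $n\log r$ and $\log b_{n-1}$ into the linear term, including the regime split on $r\lessgtr 1$, is exactly what the paper's terse "after choosing $C$ large enough" is shorthand for.
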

\begin{proof}
  A change of variables from $z$ to $(z-v)/r$ gives
  \begin{equation}
    \label{eq:66}
    \log\int\limits_{z \in B(v, r)}\exp(y\cdot (z-v)) dz
    = n\log r
    + \log\int\limits_{z \in B(0, 1)}\exp(r \, y\cdot z) dz.
  \end{equation}
  Let $\hat{y}$ be the unit vector in the direction of $y$.  Then, the map
  $z \mapsto z\cdot \hat{y}$ from $\R^n$ to $\R$ has Jacobian equal to $1$, so
  that the co-area formula applied with this map gives
  \begin{equation}
    \label{eq:67}
    \int\limits_{z \in B(0, 1)}\exp(r\,y\cdot z) dz
    = \int\limits_{t \in [-1,1]} \exp(rt\norm[2]{y})
    \cdot \H_{n-1}(\inb{z \in B(0, 1) \st \hat{y}\cdot z = t}) dt.
  \end{equation}
  The set $\inb{z \in B(0, 1) \st \hat{y}\cdot z = t}$ is an $(n-1)$-dimensional Euclidean
  ball of radius $\sqrt{1 - t^2}$.  Thus, its $(n-1)$-dimensional Hausdorff
  measure is $(1-t^2)^{(n-1)/2}b_{n-1}$.  Substituting this in \cref{eq:67}, and
  then the latter in \cref{eq:66}, and then comparing with the definition of $F$
  in \cref{eq:54} gives the claim in \cref{eq:65}.  \Cref{eq:68,eq:69} then
  follow from \cref{eq:57} after choosing $C$ large enough, and an application
  of the Stirling approximation to upper bound $- \log b_{n-1}$ as
  $O(n \log n)$.
\end{proof}

\section{Computational approximation theory}
In this section, we collect computational aspects of some standard facts from
approximation theory.  These are used in the proofs in \cref{sec:comp-aspects}.
We use the notation introduced just before \cref{prop-compute-g}.

\begin{lemma}[\textbf{Approximating the exponential}]
  \label{lem-exp-approx}
  There is an algorithm with the following characteristics.\\
  \textbf{INPUT:} A rational number \(x \geq 0\) and a rational accuracy parameter
  \(\delta \in (0, 1)\).\\
  \textbf{OUTPUT:} A rational number \(t\) satisfying
  \begin{equation} \abs{\exp(x) - 1/t} \leq \delta \exp(x) \quad \text{ and } \quad \abs{\exp(-x) - t} \leq \delta \exp(-x).
  \end{equation}
  \textbf{RUNTIME:} The algorithm uses at most \(\poly{x, \rep{x, \delta}}\) bit
  operations.
\end{lemma}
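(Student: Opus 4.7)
The plan is to approximate $\exp(x)$ by a truncated Taylor series $S_N \defeq \sum_{k=0}^N x^k/k!$ computed exactly as a rational number, and to output $t \defeq 1/S_N$. Because every term of the series is non-negative, the partial sum satisfies $S_N \leq \exp(x)$, so the only error is one-sided truncation error. This one-sidedness is what allows both of the required relative error bounds to be derived cleanly from the same computation.

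The choice of $N$ is dictated by two elementary observations. First, once $N \geq 2x$, the ratio of successive terms $x^{k+1}/(k+1)!$ to $x^k/k!$ is at most $1/2$, so the tail $\exp(x) - S_N$ is bounded by $2x^{N+1}/(N+1)!$. Combined with the Stirling-type lower bound $(N+1)! \geq ((N+1)/e)^{N+1}$, it suffices to take $N \defeq \lceil 2ex + \log_2(8/\delta) \rceil$ to guarantee that the tail is at most $(\delta/4)\exp(x)$ (recall $\exp(x) \geq 1$). Setting $t = 1/S_N$, the inequality $\exp(x) - S_N \leq (\delta/4)\exp(x)$ directly yields $\abs{\exp(x) - 1/t} \leq \delta\exp(x)$, while the resulting lower bound $S_N \geq (1-\delta/4)\exp(x)$ combined with the identity $1/S_N - \exp(-x) = (\exp(x) - S_N)/(S_N \exp(x))$ gives $\abs{t - \exp(-x)} \leq \delta\exp(-x)$ for any $\delta \in (0,1)$.

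The main subtlety is controlling the bit complexity of the exact rational arithmetic, since naively adding $N$ rationals can blow up the numerators. The key is to track $S_N$ over the fixed common denominator $q^N \cdot N!$, where $q$ is the denominator of $x$ in lowest terms; then each term $x^k/k!$ contributes an integer numerator $p^k q^{N-k} (N!/k!)$ of magnitude at most $(pq)^N \cdot N!$, so $\rep{S_N} = O\inp{N(\rep{x} + \log N)}$. With $N = O(x + \log(1/\delta))$ as chosen above, this is $\poly{x, \rep{x,\delta}}$, and all $N$ additions reduce to integer additions on numerators of this size at a total cost of $\poly{x, \rep{x,\delta}}$ bit operations. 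Finally, inverting $S_N$ to obtain $t$ is a single swap of numerator and denominator, and reducing to lowest terms (via a single gcd computation) keeps the representation length within the same polynomial bound.
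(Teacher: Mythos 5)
Your proposal is correct and takes essentially the same route as the paper: compute the truncated Taylor series $S_N(x) = \sum_{k\le N} x^k/k!$ exactly as a rational number and output its reciprocal $t = 1/S_N(x)$, choosing $N = O(x + \log(1/\delta))$. The only difference is that you derive the truncation error bound directly (ratio test plus a Stirling lower bound on $(N+1)!$), whereas the paper cites the approximation-theoretic estimate $0 \le 1/S_\ell(x) - \exp(-x) \le 2^{-\ell}$ from Cody et al.\ and unpacks both required inequalities from it; your common-denominator $q^N N!$ bookkeeping is a more explicit version of the bit-complexity claim the paper states tersely.
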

\begin{proof}
  It is a well-known result in approximation theory~\cite[Lemma
  1]{codyChebyshevRationalApproximations1969} that the polynomial
  \(S_{\ell}(x) \defeq \sum_{i = 0}^{\ell}\frac{x^i}{i!}\) satisfies
  \begin{equation}
    \label{eq:13}
    0 \leq \frac{1}{S_{\ell}(x)} - \exp(-x) \leq 2^{-\ell} \text{ for all integers } \ell \geq 0.
  \end{equation}
  Choose \(\ell\) to be an integer that is at least
  \(2\ceil{\log(1/\delta) + x}\).  Then \(t = 1/S_{\ell}(x)\) satisfies the claimed
  conditions.  Further, the definition of \(S_{\ell}(x)\) given above shows that it
  can be evaluated using \poly{\ell} additions and multiplications, each of which
  have output bit length bounded above by \(\poly{\ell, \rep{x}}\).  This completes
  the proof.
\end{proof}

\begin{lemma}[\textbf{Approximating the determinant with approximate entries}]
  \label{lem-det-approx}
  There exists an algorithm with the following characteristics.\\
  \textbf{INPUT:} A rational \(n \times n\) matrix \(M'\), along with the
  guarantee that there exist a real \(n \times n\) matrix \(M\) and rational numbers
  \(\delta' \in (0, 1/(2n))\), \(\tau\) and \(v\) such that
  \(\abs{\det M} \geq v\) and for all \(1 \leq i, j \leq n\),
  \(\abs{M'_{i,j} - M_{i,k}} \leq (\delta'/2)\abs{M_{i,j}}\) and \(\abs{M_{i,j}} \leq \tau\).\\
  \textbf{OUTPUT:} A rational number \(t\) satisfying
  \begin{equation}
    \label{eq:18}
    \abs{\det{M} - t} \leq \frac{n!\cdot (2n) \cdot \tau^{n}\cdot \delta'}{v}\cdot \abs{\det{M}}.
  \end{equation}
  \textbf{RUNTIME:} The algorithm performs at most \(\poly{n, \rep{M'}}\) bit
  operations.

\end{lemma}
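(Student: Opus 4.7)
The plan is to take $t = \det(M')$, computed exactly by a standard polynomial-time algorithm for determinants of rational matrices (e.g., Bareiss' algorithm or fraction-free Gaussian elimination after clearing denominators).  It is classical that this can be done in $\poly{n, \rep{M'}}$ bit operations, and that the output is a rational number whose representation length is also bounded by $\poly{n, \rep{M'}}$, so this part is routine.  The only work lies in showing that $|\det M - \det M'|$ is at most the quantity stated in~\cref{eq:18}.

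For the error analysis I would start from the Leibniz expansion
\begin{equation*}
  \det M = \sum_{\sigma \in S_n} \operatorname{sgn}(\sigma) \prod_{i=1}^n M_{i, \sigma(i)},
  \qquad
  \det M' = \sum_{\sigma \in S_n} \operatorname{sgn}(\sigma) \prod_{i=1}^n M'_{i, \sigma(i)},
\end{equation*}
and write $M'_{i,j} = M_{i,j}(1 + \eta_{i,j})$ with $|\eta_{i,j}| \leq \delta'/2$ (this uses the hypothesis that $|M'_{i,j} - M_{i,j}| \leq (\delta'/2)|M_{i,j}|$).  Then for each permutation $\sigma$,
\begin{equation*}
  \Bigl| \prod_{i=1}^n M'_{i,\sigma(i)} - \prod_{i=1}^n M_{i,\sigma(i)} \Bigr|
  =
  \Bigl| \prod_{i=1}^n (1 + \eta_{i,\sigma(i)}) - 1 \Bigr|
  \cdot \prod_{i=1}^n |M_{i,\sigma(i)}|.
\end{equation*}
The hypothesis $\delta' \leq 1/(2n)$ gives $n\delta'/2 \leq 1/4$, and expanding the product (or using $(1+\delta'/2)^n - 1 \leq e^{n\delta'/2} - 1 \leq 2 \cdot n\delta'/2 = n\delta'$ via the inequality $e^x - 1 \leq 2x$ for $x \in [0,1]$) yields $|\prod_i (1+\eta_{i,\sigma(i)}) - 1| \leq n\delta'$.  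Combined with $|M_{i,\sigma(i)}| \leq \tau$, this shows that each permutation contributes at most $n\delta' \tau^n$ to the error.  Summing over the $n!$ permutations and using the assumed lower bound $|\det M| \geq v$ gives
\begin{equation*}
  |\det M' - \det M| \leq n! \cdot n \cdot \tau^n \cdot \delta' \leq \frac{n! \cdot (2n) \cdot \tau^n \cdot \delta'}{v} \cdot |\det M|,
\end{equation*}
which is exactly~\cref{eq:18} (with a factor of $2$ to spare).

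The main technical point that needs care is the exact computation of $\det(M')$ on the given rational input within the claimed bit complexity; this is standard but worth citing explicitly.  The combinatorial/analytic error step is essentially a one-line Leibniz estimate, and I do not expect any substantive obstacle there once the hypothesis $\delta' < 1/(2n)$ is invoked to linearize the product $(1+\delta'/2)^n$.
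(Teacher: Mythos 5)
Your proposal is correct and follows essentially the same route as the paper: output $\det(M')$ computed exactly via a standard polynomial-time rational determinant algorithm, then bound $|\det M - \det M'|$ via the Leibniz expansion term by term. The paper states the error bound $|\det M - \det M'| \leq n!\tau^n(\exp(n\delta')-1) \leq n!\cdot(2n)\tau^n\delta'$ without detail; your derivation fills this in (with a slightly sharper constant $n$ in place of $2n$), and you correctly observe that the final multiplicative form of the bound follows from $|\det M|/v \geq 1$. (One pedantic note: the factorization $M'_{i,j} = M_{i,j}(1+\eta_{i,j})$ implicitly assumes $M_{i,j}\neq 0$; when $M_{i,j}=0$ the hypothesis forces $M'_{i,j}=0$ as well, so those terms contribute zero error and the estimate still holds.)
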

\begin{proof}
  The algorithm simply outputs \(\det {M'}\), which can be computed using
  \(\poly{n, \rep{M'}}\) bit operations (via Gaussian elimination; see
  \cite{Edmondssystemsdistinctrepresentatives1967} and \cite[p.~39]{GLSbook}).
  To see that the guarantee on the output holds, we note that
  \begin{equation}
    \label{eq:19}
    \abs{\det{M} - \det{M'}} \leq n!\tau^n(\exp(n\delta') - 1) \leq n!\cdot(2n)\tau^n\delta',
  \end{equation}
  since \(n\delta' \leq \frac{1}{2}\).
\end{proof}

\begin{lemma}[\textbf{Approximating the logarithm}]
  \label{lem-log-approx}
  There is an algorithm with the following characteristics.\\
  \textbf{INPUT:} A rational number \(x > 0\) and a rational accuracy parameter
  \(\delta \in (0, 1)\).\\
  \textbf{OUTPUT:} A rational number \(t\) satisfying
  \begin{equation} \abs{\log x - t} \leq \delta.
  \end{equation}
  \textbf{RUNTIME:} The algorithm uses at most \(\poly{\rep{x, \delta}}\) bit
  operations.
\end{lemma}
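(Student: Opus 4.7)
The plan is to reduce the problem of approximating $\log x$ to that of approximating $\log m$ for $m \in [1, 2)$ by a simple range reduction, and to use a rapidly convergent series for the latter. Given the input rational $x > 0$, I would first find the unique integer $k$ such that $m \defeq x / 2^k \in [1, 2)$. This $k$ satisfies $|k| \leq \rep{x}$ and can be read off directly from the binary representation of the numerator and denominator of $x$, using $\poly{\rep{x}}$ bit operations. Since $\log x = k \log 2 + \log m$, it remains to approximate $\log m$ and $\log 2$.

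For the main approximation, I would use the identity $\log m = 2\,\mathrm{arctanh}(z)$, where $z \defeq (m-1)/(m+1)$, together with the series $\mathrm{arctanh}(z) = \sum_{i \geq 0} z^{2i+1}/(2i+1)$. Since $m \in [1, 2)$ forces $z \in [0, 1/3)$, the tail of the series truncated after $\ell$ terms is bounded by $\sum_{i > \ell} z^{2i+1}/(2i+1) \leq 2 \cdot 3^{-2\ell - 3}$, so choosing $\ell = O(\log(1/\delta))$ yields an additive error of at most $\delta/3$. The same series, applied with $m = 2$ and $z = 1/3$, produces a rational approximation to $\log 2$ with any desired additive accuracy; I would set this accuracy to $\delta/(3(|k| + 1))$ so that after multiplying by $k$ the contribution to the total error is at most $\delta/3$. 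Finally, I would output $t \defeq k \cdot t_1 + t_2$, where $t_1 \approx \log 2$ and $t_2 \approx \log m$ are the two rational approximants, so that the three error contributions sum to at most $\delta$.

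For complexity, each partial sum of the arctanh series is an exact rational whose reduced form has bit length $\poly{\ell, \rep{m}}$, and there are $O(\log(1/\delta))$ terms; all the rational arithmetic involved therefore uses $\poly{\rep{x}, \rep{\delta}}$ bit operations, and the final $t$ can be rounded to a rational of the same bit length. The only real technical nuisance, rather than a substantive obstacle, is bookkeeping the error budget across the three stages (approximating $\log 2$, approximating $\log m$, and combining them with the integer multiplier $k$) and controlling intermediate bit lengths so that the total stays polynomial; this is routine.
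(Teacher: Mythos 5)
Your proposal is correct and follows essentially the same strategy as the paper's proof: range reduction by dividing out powers of a fixed base so the remainder lies in an interval where a rapidly convergent power series applies, approximating $\log$ of the base separately, and recombining. The only differences are cosmetic: you reduce modulo powers of $2$ (read off from bit lengths) rather than the paper's powers of $3/2$ (found by trial), and you use the $\mathrm{arctanh}$ series in place of the Mercator series $-\sum z^i/i$ for $\log(1-z)$; both choices keep the argument in $[0,1/3]$ and give the same polynomial bit-complexity, so the two proofs are interchangeable.
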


\begin{proof}
  We begin by observing that given a positive rational number \(z \leq 1/3\) and a
  positive integer \(k\), the first \(k\) terms of the Taylor series around
  \(0\) of \(\log (1-z)\) can be computed in \(\poly{k, \rep{z}}\) bit
  operations, and provide a \(\pm\exp(-k)\) additive approximation of
  \(\log z\).

  We now assume without loss of generality that \(x > 1\), and construct, using
  at most \(\poly{\rep{x}}\) bit operations, a positive integer \(n\) and a
  rational number \(y\) such that \(y = x/(3/2)^n\) and
  \(\frac{2}{3} \leq y \leq 1\) (this can be done simply by trying out powers of
  \(\frac{3}{2}\) in increasing order).  Note that by construction, \(\rep{y}\)
  and \(n\) are both at most \(\poly{\rep{x}}\).  Thus, by the above
  observation, a \(\pm\delta/2\) additive approximation \(v_1\) to \(\log y\) can be
  computed in \(\poly{\log(1/\delta), \rep{x}}\) bit operations.  By the same
  observation, a \(\pm\delta/(2n)\) approximation \(v_2\) to \(\log (3/2)\) can be
  computed in \(\poly{\log(n/\delta)}\) bit operations.  This upper bound on
  bit operations, by the above upper bound on \(n\), is at most
  \(\poly{\log(1/\delta), \rep{x}}\). The quantity \(v_1 + n v_2\) then gives a
  \(\pm\delta\) approximation to \(\log x\), and the result follows since
  \(\log(1/\delta)\) is at most \(O(\rep{\delta})\).
\end{proof}

\begin{lemma}
  \label{lem-log-one-minux-exp-approx}
  Let \(z\) be a positive real number. Then
  \begin{equation}
    \abs{\log(1 - \exp(-z))} \leq \max\inb{1, \log(2/z)}.
  \end{equation}
\end{lemma}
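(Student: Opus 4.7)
The first step is to rephrase the claim in a form amenable to elementary calculus. Since $1 - e^{-z} \in (0,1)$ for every $z > 0$, we have $\log(1 - e^{-z}) < 0$, so $\abs{\log(1 - e^{-z})} = \log\bigl(1/(1 - e^{-z})\bigr)$. Exponentiating, the target inequality is equivalent to
\begin{equation*}
  1 - e^{-z} \;\geq\; \min\inb{e^{-1},\, z/2}.
\end{equation*}
I would then split on whether $z \geq 1$ or $0 < z < 1$, matching the two terms of the $\min$ to the two terms of the $\max$ on the right-hand side of the original bound.

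For $z \geq 1$, the function $z \mapsto 1 - e^{-z}$ is monotone increasing, so it suffices to verify the boundary case: $1 - e^{-1} > e^{-1}$, which holds because $e > 2$. This immediately gives $\abs{\log(1 - e^{-z})} < 1 \leq \max\inb{1,\log(2/z)}$. For $0 < z < 1$, I would prove $1 - e^{-z} \geq z/2$ via a short one-variable calculus argument: setting $g(z) \defeq (1 - e^{-z}) - (z - z^2/2)$, one has $g(0) = 0$ and $g'(z) = e^{-z} - 1 + z \geq 0$ (by the convexity inequality $e^{-z} \geq 1 - z$), so $g$ is non-decreasing on $[0,\infty)$, yielding $1 - e^{-z} \geq z - z^2/2 \geq z/2$ whenever $z \leq 1$. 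Taking the logarithm of the reciprocal then gives $\abs{\log(1 - e^{-z})} \leq \log(2/z) \leq \max\inb{1,\log(2/z)}$.

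There is no substantial obstacle; the only care needed is aligning the case split with the correct term of the outer maximum, which is why $z = 1$ serves as a clean boundary: the inequality $1 - e^{-1} > e^{-1}$ (equivalently $e > 2$) dispatches the large-$z$ regime by monotonicity, while the second-order Taylor lower bound $1 - e^{-z} \geq z - z^2/2$ disposes of the small-$z$ regime, with the factor $1/2$ there matching the factor of $2$ inside $\log(2/z)$.
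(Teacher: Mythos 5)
Your proof is correct and follows essentially the same route as the paper's: split at $z=1$, and use the elementary inequality $1-e^{-z}\geq z/2$ for $0<z<1$ (which the paper states without the Taylor justification you supply). The only cosmetic difference is in the large-$z$ regime, where the paper writes $\abs{\log(1-e^{-z})}=\log\bigl(1+\tfrac{1}{e^z-1}\bigr)\leq\tfrac{1}{e^z-1}\leq 1$ for $z\geq 1$, while you exponentiate and check $1-e^{-1}>e^{-1}$ directly; both are fine.
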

\begin{proof}
  Note that
  \begin{equation}
    \label{eq:29}
    \abs{\log(1 - \exp(-z))} = \log\inp{1 + \frac{1}{\exp(z) - 1}} \leq
    \frac{1}{e^z - 1}.
  \end{equation}
  This is at most \(1\) when \(z \geq 1\).  On the other hand, when
  \(0 < z < 1\), we have \(1 > 1 - \exp(-z) \geq z/2\), and the claim follows.
\end{proof}

\section*{Acknowledgments}

We are grateful to Jonathan Leake and Igor Pak for helpful discussions. We are also grateful to Colin McSwiggen for many useful comments and a helpful discussion.

\notbool{anonymous}{\grantacknowledgement{}}


\begin{thebibliography}{HMMSD22}

\bibitem[AGGS17]{anariSimplyExponentialApproximation2017}
N.~Anari, L.~Gurvits, S.~O. Gharan, and A.~Saberi.
\newblock Simply {{exponential approximation}} of the {{permanent}} of
  {{positive semidefinite matrices}}.
\newblock In {\em Proc. 58th {{Annual Symposium}} on {{Foundations}} of
  {{Computer Science}} ({{FOCS}})}, pages 914--925. IEEE Computer Society,
  October 2017.
\newblock DOI:
  \href{https://doi.org/10.1109/FOCS.2017.89}{10.1109/FOCS.2017.89}.

\bibitem[Ale]{Alexandersson2020-GT}
P.~Alexandersson.
\newblock The symmetric functions catalog: {Gelfand}--{Tsetlin} patterns.
\newblock Online.
\newblock URL: \url{https://www.symmetricfunctions.com/gtpatterns.htm}.

\bibitem[AR25]{anariTightAnalysisBethe2021}
N.~Anari and A.~Rezaei.
\newblock A tight analysis of {Bethe} approximation for permanent.
\newblock {\em SIAM J. Comput.}, 54(4):FOCS19--81--FOCS19--101, 2025.
\newblock DOI: \href{https://doi.org/10.1137/19M1306142}{10.1137/19M1306142}.

\bibitem[Bar93]{barvinokComputingVolumeCounting1993}
A.~I. Barvinok.
\newblock Computing the volume, counting integral points, and exponential sums.
\newblock {\em Discrete Comput. Geom.}, 10(2):123--141, July 1993.
\newblock DOI: \href{https://doi.org/10.1007/BF02573970}{10.1007/BF02573970}.

\bibitem[Bar01]{baryshnikov_gues_2001}
Y.~Baryshnikov.
\newblock {GUEs} and queues.
\newblock {\em Probab. Theory Related Fields}, 119(2):256--274, February 2001.
\newblock DOI: \href{https://doi.org/10.1007/PL00008760}{10.1007/PL00008760}.

\bibitem[Bar09]{barvinokAsymptoticEstimatesNumber2009}
A.~Barvinok.
\newblock Asymptotic {{estimates}} for the {{number}} of {{contingency
  tables}}, {{integer flows}}, and {{volumes}} of {{transportation polytopes}}.
\newblock {\em Int. Math. Res. Not. IMRN}, 2009(2):348--385, January 2009.
\newblock DOI: \href{https://doi.org/10.1093/imrn/rnn133}{10.1093/imrn/rnn133}.

\bibitem[Bar17]{barvinok2017combinatorics}
A.~Barvinok.
\newblock {\em Combinatorics and Complexity of Partition Functions}.
\newblock Algorithms and Combinatorics. Springer, 2017.

\bibitem[Bar21]{barvinokRemarkApproximatingPermanents2021}
A.~Barvinok.
\newblock A remark on approximating permanents of positive definite matrices.
\newblock {\em Linear Algebra Appl.}, 608:399--406, January 2021.
\newblock DOI:
  \href{https://doi.org/10.1016/j.laa.2020.09.032}{10.1016/j.laa.2020.09.032}.

\bibitem[BF87]{baranyComputingVolumeDifficult1987}
I.~B{\'a}r{\'a}ny and Z.~F{\"u}redi.
\newblock Computing the volume is difficult.
\newblock {\em Discrete Comput. Geom.}, 2(4):319--326, December 1987.
\newblock DOI: \href{https://doi.org/10.1007/BF02187886}{10.1007/BF02187886}.

\bibitem[BF97]{barvinok_sparse_1997}
A.~Barvinok and S.~Fomin.
\newblock Sparse {Interpolation} of {Symmetric} {Polynomials}.
\newblock {\em Adv. Appl. Math.}, 18(3):271--285, April 1997.
\newblock DOI:
  \href{https://doi.org/10.1006/aama.1996.0508}{10.1006/aama.1996.0508}.

\bibitem[BG08]{bandyopadhyay_counting_2008}
A.~Bandyopadhyay and D.~Gamarnik.
\newblock Counting without sampling: {Asymptotics} of the log-partition
  function for certain statistical physics models.
\newblock {\em Random Structures Algorithms}, 33(4):452--479, 2008.
\newblock DOI: \href{https://doi.org/10.1002/rsa.20236}{10.1002/rsa.20236}.

\bibitem[BH10]{barvinokMaximumEntropyGaussian2010}
A.~Barvinok and J.~A. Hartigan.
\newblock Maximum entropy {{Gaussian}} approximations for the number of integer
  points and volumes of polytopes.
\newblock {\em Adv. Appl. Math.}, 45(2):252--289, August 2010.
\newblock DOI:
  \href{https://doi.org/10.1016/j.aam.2010.01.004}{10.1016/j.aam.2010.01.004}.

\bibitem[BI13]{BI}
P.~B\"{u}rgisser and C.~Ikenmeyer.
\newblock Deciding positivity of littlewood--richardson coefficients.
\newblock {\em SIAM J. Discrete Math.}, 27(4):1639--1681, 2013.
\newblock DOI: \href{https://doi.org/10.1137/120892532}{10.1137/120892532}.

\bibitem[BLP23]{BLP}
P.~Br{\"a}nd{\'e}n, J.~Leake, and I.~Pak.
\newblock Lower bounds for contingency tables via {Lorentzian} polynomials.
\newblock {\em Israel J. Math.}, 253(1):43--90, 2023.
\newblock DOI:
  \href{https://doi.org/10.1007/s11856-022-2364-9}{10.1007/s11856-022-2364-9}.

\bibitem[Bol77]{Boltzmann1877}
L.~Boltzmann.
\newblock Über die {Beziehung} zwischen dem zweiten {Hauptsatze} der
  mechanischen {Wärmetheorie} und der {Wahrscheinlichkeitsrechnung},
  respective den {Sätzen} über das {Wärmegleichgewicht}.
\newblock {\em Sitzungsberichte der {Kaiserlichen} {Akademie} der
  {Wissenschaften}, mathematisch-naturwissenschaftliche {Klasse}},
  LXXVI(II):373--435, 1877.
\newblock DOI: \href{https://doi.org/10.3390/e17041971}{10.3390/e17041971}.
\newblock (``On the relationship between the second main theorem of mechanical
  heat theory and the probability calculation with respect to the results about
  the heat equilibrium'', translated by K.~Sharp and F.~Matschinsky,
  \emph{Entropy}, 2015, 17(4), pp.~1971-2009.).

\bibitem[BR24a]{BR22_quick}
A.~Barvinok and M.~Rudelson.
\newblock A quick estimate for the volume of a polyhedron.
\newblock {\em Israel J. Math.}, 262:449--473, April 2024.
\newblock DOI:
  \href{https://doi.org/10.1007/s11856-024-2615-z}{10.1007/s11856-024-2615-z}.

\bibitem[BR24b]{bencsApproximatingVolumeTruncated2024}
F.~Bencs and G.~Regts.
\newblock Approximating the volume of a truncated relaxation of the
  independence polytope, April 2024.
\newblock \arxiv{2404.08577}.

\bibitem[CDW12]{christandl}
M.~Christandl, B.~Doran, and M.~Walter.
\newblock Computing multiplicities of {Lie} group representations.
\newblock {\em Proc. 53rd Annual Symposium on Foundations of Computer Science
  (FOCS)}, pages 639--648, 2012.
\newblock URL: \url{https://api.semanticscholar.org/CorpusID:15671048}.

\bibitem[CM23]{Collins-McSwiggen}
B.~Collins and C.~McSwiggen.
\newblock Projections of orbital measures and quantum marginal problems.
\newblock {\em Trans. Amer. Math. Soc.}, 376:5601--5640, 2023.

\bibitem[CMV69]{codyChebyshevRationalApproximations1969}
W.~Cody, G.~Meinardus, and R.~Varga.
\newblock Chebyshev rational approximations to $e^{-x}$ in $[0, +\infty)$ and
  applications to heat-conduction problems.
\newblock {\em J. Approx. Theory}, 2(1):50--65, March 1969.
\newblock DOI:
  \href{https://doi.org/10.1016/0021-9045(69)90030-6}{10.1016/0021-9045(69)90030-6}.

\bibitem[CMZ20]{Coquereaux2020}
R.~Coquereaux, C.~McSwiggen, and J.-B. Zuber.
\newblock On {Horn}'s problem and its volume function.
\newblock {\em Comm. Math. Phys.}, 376:2409--2439, 2020.
\newblock DOI:
  \href{https://doi.org/10.1007/s00220-019-03646-7}{10.1007/s00220-019-03646-7}.

\bibitem[CZ21]{CoqZub}
R.~Coquereaux and J.-B. Zuber.
\newblock {\em Integrability, Quantization, and Geometry: {II}. {Quantum}
  Theories and Algebraic Geometry}, volume 103.2.
\newblock American Mathematical Society, Providence, RI, 2021.
\newblock DOI: \href{https://doi.org/10.1090/pspum/103.2}{10.1090/pspum/103.2}.

\bibitem[DF88]{dyerComplexityComputingVolume1988}
M.~E. Dyer and A.~M. Frieze.
\newblock On the {{complexity}} of {{computing}} the {{volume}} of a
  {{polyhedron}}.
\newblock {\em SIAM J. Comput.}, 17(5):967--974, October 1988.
\newblock DOI: \href{https://doi.org/10.1137/0217060}{10.1137/0217060}.

\bibitem[DF91]{dyerComputingVolumeConvex1991}
M.~Dyer and A.~Frieze.
\newblock Computing the volume of convex bodies: A case where randomness
  provably helps.
\newblock In {\em Proc. {{Sympos.}} {{Appl. Math.}}}, volume~44, pages
  123--169. American Mathematical Society, Providence, Rhode Island, 1991.
\newblock DOI:
  \href{https://doi.org/10.1090/psapm/044/1141926}{10.1090/psapm/044/1141926}.

\bibitem[DFK91]{dyerRandomPolynomialtimeAlgorithm1991}
M.~Dyer, A.~Frieze, and R.~Kannan.
\newblock A {{random polynomial-time algorithm}} for {{approximating}} the
  {{volume}} of {{convex bodies}}.
\newblock {\em J. ACM}, 38(1):1--17, January 1991.
\newblock DOI:
  \href{https://doi.org/10.1145/102782.102783}{10.1145/102782.102783}.

\bibitem[Edm67]{Edmondssystemsdistinctrepresentatives1967}
J.~Edmonds.
\newblock Systems of {{distinct representatives}} and {{linear algebra}}.
\newblock {\em Journal of Research of the National Bureau of Standards-B.
  Mathematics and Mathematical Physics}, 71B:241--245, October 1967.

\bibitem[Ele86]{elekesGeometricInequalityComplexity1986}
G.~Elekes.
\newblock A geometric inequality and the complexity of computing volume.
\newblock {\em Discrete Comput. Geom.}, 1(4):289--292, December 1986.
\newblock DOI: \href{https://doi.org/10.1007/BF02187701}{10.1007/BF02187701}.

\bibitem[ENG25]{ebrahimnejadApproximabilityPermanentPSD2024}
F.~Ebrahimnejad, A.~Nagda, and S.~O. Gharan.
\newblock On approximability of the permanent of {PSD} matrices.
\newblock In {\em Proc. 57th Annual ACM Symposium on Theory of Computing
  (STOC)}, page 625–630. Association for Computing Machinery, 2025.
\newblock DOI:
  \href{https://doi.org/10.1145/3717823.3718107}{10.1145/3717823.3718107}.

\bibitem[Fed96]{F96}
H.~Federer.
\newblock {\em Geometric measure theory}.
\newblock Classics in mathematics. Springer, 1996.
\newblock DOI:
  \href{https://doi.org/10.1007/978-3-642-62010-2}{10.1007/978-3-642-62010-2}.
\newblock Reprint of the 1969 edition.

\bibitem[FI20]{FI20}
N.~Fischer and C.~Ikenmeyer.
\newblock The computational complexity of plethysm coefficients.
\newblock {\em Comput. Complexity}, 29(2):8, 2020.
\newblock DOI:
  \href{https://doi.org/10.1007/S00037-020-00198-4}{10.1007/S00037-020-00198-4}.

\bibitem[Fra97]{fradelizi_sections_1997}
M.~Fradelizi.
\newblock Sections of convex bodies through their centroid.
\newblock {\em Arch. Math. (Basel)}, 69(6):515--522, December 1997.
\newblock DOI:
  \href{https://doi.org/10.1007/s000130050154}{10.1007/s000130050154}.

\bibitem[Ful96]{fulton_1996}
W.~Fulton.
\newblock {\em Young {Tableaux}: {With} {Applications} to {Representation
  Theory} and {Geometry}}.
\newblock London Mathematical Society Student Texts. Cambridge University
  Press, 1996.
\newblock DOI:
  \href{https://doi.org/10.1017/CBO9780511626241}{10.1017/CBO9780511626241}.

\bibitem[GLS93]{GLSbook}
M.~Grötschel, L.~Lovász, and A.~Schrijver.
\newblock {\em Geometric {Algorithms} and {Combinatorial} {Optimization}},
  volume~2 of {\em Algorithms and {Combinatorics}}.
\newblock Springer, Berlin, Heidelberg, second corrected edition, 1993.
\newblock DOI:
  \href{https://doi.org/10.1007/978-3-642-78240-4}{10.1007/978-3-642-78240-4}.

\bibitem[GN25]{guoDeterministicApproximationVolume2025}
H.~Guo and V.~N.
\newblock Deterministic {{approximation}} for the {{volume}} of the {{truncated
  fractional matching polytope}}.
\newblock In {\em Proc. 16th {{Innovations}} in {{Theoretical Computer Science
  Conference}} ({{ITCS}})}, volume 325 of {\em {{LIPIcs. Leibniz Int. Proc.
  Inform.}}}, pages 57:1--57:14. Schloss Dagstuhl -- Leibniz-Zentrum f{\"u}r
  Informatik, 2025.
\newblock DOI:
  \href{https://doi.org/10.4230/LIPICS.ITCS.2025.57}{10.4230/LIPICS.ITCS.2025.57}.

\bibitem[GS14]{gurvitsBoundsPermanentApplications2014}
L.~Gurvits and A.~Samorodnitsky.
\newblock Bounds on the {{permanent}} and {{some applications}}.
\newblock In {\em Proc. 55th {{Annual Symposium}} on {{Foundations}} of
  {{Computer Science}} (FOCS)}, pages 90--99. IEEE Computer Society, October
  2014.
\newblock DOI:
  \href{https://doi.org/10.1109/FOCS.2014.18}{10.1109/FOCS.2014.18}.

\bibitem[GS22]{gorinGaussianFluctuationsProducts2022}
V.~Gorin and Y.~Sun.
\newblock Gaussian fluctuations for products of random matrices.
\newblock {\em Amer. J. Math.}, 144(2):287--393, 2022.
\newblock DOI:
  \href{https://doi.org/10.1353/ajm.2022.0006}{10.1353/ajm.2022.0006}.

\bibitem[GS23]{gamarnikComputingVolumeRestricted2023}
D.~Gamarnik and D.~Smedira.
\newblock Computing the {{volume}} of a {{restricted independent set polytope
  deterministically}}, December 2023.
\newblock \arxiv{2312.03906}.

\bibitem[HMMSD22]{HMMSD22}
J.~Huh, J.~Matherne, K.~Mészáros, and A.~St.~Dizier.
\newblock Logarithmic concavity of {Schur} and related polynomials.
\newblock {\em Trans. Amer. Math. Soc.}, 375(6):4411--4427, 2022.
\newblock DOI: \href{https://doi.org/10.1090/tran/8606}{10.1090/tran/8606}.

\bibitem[IMW17]{IMW17}
C.~Ikenmeyer, K.~D. Mulmuley, and M.~Walter.
\newblock On vanishing of kronecker coefficients.
\newblock {\em Comput. Complexity}, 26(4):949--992, 2017.
\newblock DOI:
  \href{https://doi.org/10.1007/s00037-017-0158-y}{10.1007/s00037-017-0158-y}.

\bibitem[IP17]{IP17}
C.~Ikenmeyer and G.~Panova.
\newblock Rectangular kronecker coefficients and plethysms in geometric
  complexity theory.
\newblock {\em Adv. Math.}, 319:40--66, 2017.
\newblock DOI:
  \href{https://doi.org/https://doi.org/10.1016/j.aim.2017.08.024}{https://doi.org/10.1016/j.aim.2017.08.024}.

\bibitem[IP22]{Pak_Ikenmeyer}
C.~Ikenmeyer and I.~Pak.
\newblock { What is in \#P and what is not? }.
\newblock In {\em Proc. 63rd Annual Symposium on Foundations of Computer
  Science (FOCS)}, pages 860--871. IEEE Computer Society, 2022.
\newblock DOI:
  \href{https://doi.org/10.1109/FOCS54457.2022.00087}{10.1109/FOCS54457.2022.00087}.

\bibitem[KT01]{honey}
A.~Knutson and T.~Tao.
\newblock Honeycombs and sums of {Hermitian} matrices.
\newblock {\em Notices Amer. Math. Soc.}, 48(2):175--186, February 2001.
\newblock URL: \url{https://www.ams.org/notices/200102/fea-knutson.pdf}.

\bibitem[KT21]{khare_sign_2021}
A.~Khare and T.~Tao.
\newblock On the sign patterns of entrywise positivity preservers in fixed
  dimension.
\newblock {\em Amer. J. Math.}, 143(6):1863--1929, 2021.
\newblock DOI:
  \href{https://doi.org/10.1353/ajm.2021.0049}{10.1353/ajm.2021.0049}.

\bibitem[Law91]{lawrencePolytopeVolumeComputation1991}
J.~Lawrence.
\newblock Polytope volume computation.
\newblock {\em Math. Comp.}, 57(195):259--271, 1991.

\bibitem[LMV21]{LMV21}
J.~Leake, C.~McSwiggen, and N.~K. Vishnoi.
\newblock Sampling matrices from {Harish}-{Chandra}-{Itzykson}-{Zuber}
  densities with applications to {Quantum} inference and differential privacy.
\newblock In {\em Proc. 53rd {Annual} {Symposium} on {Theory} of {Computing}
  (STOC)}, pages 1384--1397. Association for Computing Machinery, 2021.
\newblock DOI:
  \href{https://doi.org/10.1145/3406325.3451094}{10.1145/3406325.3451094}.

\bibitem[LSW00]{linialDeterministicStronglyPolynomial2000}
N.~Linial, A.~Samorodnitsky, and A.~Wigderson.
\newblock A {{deterministic strongly polynomial algorithm}} for {{matrix
  scaling}} and {{approximate permanents}}.
\newblock {\em Combinatorica}, 20(4):545--568, April 2000.
\newblock DOI:
  \href{https://doi.org/10.1007/s004930070007}{10.1007/s004930070007}.

\bibitem[LV22]{leeManifoldJoysSampling2022}
Y.~T. Lee and S.~S. Vempala.
\newblock The {{manifold joys}} of {{sampling}} ({{invited talk}}).
\newblock In {\em Proc. 49th {{International Colloquium}} on {{Automata}},
  {{Languages}}, and {{Programming}} ({{ICALP}})}, volume 229 of {\em {{LIPIcs.
  Leibniz Int. Proc. Inform.}}}, pages 4:1--4:20. Schloss Dagstuhl --
  Leibniz-Zentrum f{\"u}r Informatik, 2022.
\newblock DOI:
  \href{https://doi.org/10.4230/LIPICS.ICALP.2022.4}{10.4230/LIPICS.ICALP.2022.4}.

\bibitem[Mar64]{M64}
A.~S. Markus.
\newblock The eigen- and singular values of the sum and product of linear
  operators.
\newblock {\em Russian Math. Surveys}, 19(4):91, August 1964.
\newblock DOI:
  \href{https://doi.org/10.1070/RM1964v019n04ABEH001154}{10.1070/RM1964v019n04ABEH001154}.

\bibitem[McS21]{McSwiggen2021BoxSplines}
C.~McSwiggen.
\newblock Box splines, tensor product multiplicities and the volume function.
\newblock {\em Algebr. Comb.}, 4(3):435--464, 2021.
\newblock DOI: \href{https://doi.org/10.5802/alco.164}{10.5802/alco.164}.

\bibitem[MN22]{mcswiggenMajorizationSphericalFunctions2022}
C.~McSwiggen and J.~Novak.
\newblock Majorization and {{spherical functions}}.
\newblock {\em Int. Math. Res. Not. IMRN}, 2022(5):3977--4000, February 2022.
\newblock DOI:
  \href{https://doi.org/10.1093/imrn/rnaa390}{10.1093/imrn/rnaa390}.

\bibitem[MNS12]{GCT3}
K.~D. Mulmuley, H.~Narayanan, and M.~Sohoni.
\newblock Geometric complexity theory iii: on deciding nonvanishing of a
  littlewood---richardson coefficient.
\newblock {\em J. Algebraic Combin.}, 36(1):103–110, August 2012.
\newblock DOI:
  \href{https://doi.org/10.1007/s10801-011-0325-1}{10.1007/s10801-011-0325-1}.

\bibitem[Moi19]{moitraApproximateCountingLovasz2019}
A.~Moitra.
\newblock Approximate {{counting}}, the {{Lov{\'a}sz local lemma}}, and
  {{inference}} in {{graphical models}}.
\newblock {\em J. ACM}, 66(2):10:1--10:25, April 2019.
\newblock DOI: \href{https://doi.org/10.1145/3268930}{10.1145/3268930}.

\bibitem[Mul11]{GCT}
K.~D. Mulmuley.
\newblock On {P} vs. {NP} and geometric complexity theory: {Dedicated} to {Sri}
  {Ramakrishna}.
\newblock {\em J. ACM}, 58(2), April 2011.
\newblock DOI:
  \href{https://doi.org/10.1145/1944345.1944346}{10.1145/1944345.1944346}.

\bibitem[Nar06]{Nar06}
H.~Narayanan.
\newblock On the complexity of computing {Kostka} numbers and
  {Littlewood}-{Richardson} coefficients.
\newblock {\em J. Algebraic Combin.}, 24(3):347--354, November 2006.
\newblock DOI:
  \href{https://doi.org/10.1007/s10801-006-0008-5}{10.1007/s10801-006-0008-5}.

\bibitem[Ner03]{Ner03}
Y.~A. Neretin.
\newblock Rayleigh triangles and non-matrix interpolation of matrix beta
  integrals.
\newblock {\em Sb. Math.}, 194(4):515--540, April 2003.
\newblock DOI:
  \href{https://doi.org/10.1070/SM2003v194n04ABEH000727}{10.1070/SM2003v194n04ABEH000727}.

\bibitem[Oko03]{okounkov_why_2003}
A.~Okounkov.
\newblock Why would multiplicities be log-concave?
\newblock In {\em The {Orbit} {Method} in {Geometry} and {Physics}: {In}
  {Honor} of {A}.{A}. {Kirillov}}, volume 213 of {\em Progr. {Math.}}, pages
  329--347. Birkhäuser, Boston, MA, 2003.
\newblock DOI:
  \href{https://doi.org/10.1007/978-1-4612-0029-1\_14}{10.1007/978-1-4612-0029-1\_14}.

\bibitem[Pan24a]{Panova2}
G.~Panova.
\newblock Complexity and asymptotics of structure constants.
\newblock In {\em Proc. Sympos. Pure Math.}, volume 110, pages 61--85. American
  Mathematical Society, Providence, Rhode Island, 2024.
\newblock DOI:
  \href{https://doi.org/10.1090/pspum/110/02016}{10.1090/pspum/110/02016}.

\bibitem[Pan24b]{Panova1}
G.~Panova.
\newblock Computational complexity in algebraic combinatorics.
\newblock {\em Current Developments in Mathematics}, 2023-2024:241--280, 2024.
\newblock DOI:
  \href{https://doi.org/10.4310/CDM.250429173831}{10.4310/CDM.250429173831}.

\bibitem[Pos09]{Post09}
A.~Postnikov.
\newblock Permutohedra, {Associahedra}, and beyond.
\newblock {\em Int. Math. Res. Not. IMRN}, 2009(6):1026--1106, January 2009.
\newblock DOI: \href{https://doi.org/10.1093/imrn/rnn153}{10.1093/imrn/rnn153}.

\bibitem[Sha93]{shatashvili_correlation_1993}
S.~L. Shatashvili.
\newblock Correlation functions in the {Itzykson}-{Zuber} model.
\newblock {\em Comm. Math. Phys.}, 154(2):421--432, June 1993.
\newblock DOI: \href{https://doi.org/10.1007/BF02097004}{10.1007/BF02097004}.

\bibitem[Sra16]{sraInequalitiesNormalizedSchur2016}
S.~Sra.
\newblock On inequalities for normalized {{Schur}} functions.
\newblock {\em European J. Combin.}, 51:492--494, January 2016.
\newblock DOI:
  \href{https://doi.org/10.1016/j.ejc.2015.07.005}{10.1016/j.ejc.2015.07.005}.

\bibitem[Sta99]{Stanley1999}
R.~P. Stanley.
\newblock {\em Enumerative Combinatorics: Volume 2}, volume~62 of {\em
  Cambridge Studies in Advanced Mathematics}.
\newblock Cambridge University Press, 1999.

\bibitem[Tao17]{Tao-blog}
T.~Tao.
\newblock Continuous analogues of the {Schur} and skew {Schur} polynomials,
  September 2017.
\newblock URL:
  \url{https://terrytao.wordpress.com/2017/09/05/continuous-analogues-of-the-schur-and-skew-schur-polynomials/}.

\bibitem[Von13]{vontobelBethePermanentNonnegative2013}
P.~O. Vontobel.
\newblock The {{Bethe permanent}} of a {{nonnegative matrix}}.
\newblock {\em IEEE Trans. Inform. Theory}, 59(3):1866--1901, March 2013.
\newblock DOI:
  \href{https://doi.org/10.1109/TIT.2012.2227109}{10.1109/TIT.2012.2227109}.

\bibitem[Wei06]{Weitz}
D.~Weitz.
\newblock Counting independent sets up to the tree threshold.
\newblock In {\em Proc. 38th Annual ACM Symposium on Theory of Computing
  (STOC)}, pages 140--149. Association for Computing Machinery, 2006.

\bibitem[YN76]{YN76}
D.~B. Yudin and A.~S. Nemirovskii.
\newblock Informational complexity and efficient methods for the solution of
  convex extremal problems.
\newblock {\em Èkonomika i Matematicheskie Metody}, 12:357--369, 1976.
\newblock In Russian. English translation: \emph{Matekon} 13 (3) (1977) 25--45.

\bibitem[YP22]{yuanMaximizingProductsLinear2022}
C.~Yuan and P.~A. Parrilo.
\newblock Maximizing products of linear forms, and the permanent of positive
  semidefinite matrices.
\newblock {\em Math. Program.}, 193(1):499--510, May 2022.
\newblock DOI:
  \href{https://doi.org/10.1007/s10107-021-01616-3}{10.1007/s10107-021-01616-3}.

\end{thebibliography}
\end{document}